\newtheorem{theorem}{Theorem}[section]
\newtheorem{lemma}[theorem]{Lemma}
\newtheorem{proposition}[theorem]{Proposition}
\theoremstyle{definition}
\newtheorem{definition}[theorem]{Definition}
\theoremstyle{remark}
\numberwithin{equation}{section}
\begin{document}
\title{elliptic boundary value problems for the stationary vacuum spacetimes}
\author{Zhongshan An}
\begin{abstract}
We develop a general method of proving the ellipticity of boundary value problems for the stationary vacuum space time, by showing that the stationary vacuum field equations are elliptic subjected to a geometrically natural collection of boundary conditions in the projection formalism. Using this we prove that the moduli space of stationary vacuum spacetimes admits Banach manifold structure. 
\end{abstract}
\maketitle

\section{Introduction}
A stationary spacetime $(V^{(4)},g^{(4)})$ is a 4-manifold $V^{(4)}$ equipped with a smooth Lorentzian metric $g^{(4)}$ of signature $(-,+,+,+)$, which in addition admits a time-like Killing vector field. A trivial example is the Minkowski space $(\mathbb{R}^{4}, g_{\text{Min}})$, where
	$$g_{\text{Min}}= -dt^2 + dx_1^2 + dx_2^2 + d x_3^2.$$
	
Stationary vacuum spacetimes are stationary spacetimes $(V^{(4)},g^{(4)})$ that solve the vacuum Einstein equation
\begin{equation}
Ric_{g^{(4)}}=0,
\end{equation}
where $Ric_{g^{(4)}}$ is the Ricci tensor of the metric $g^{(4)}$. Stationary vacuum spacetimes are important and much studied in general relativity. Two famous nontrivial examples (cf.[W]) are the Schwarzschild metric,
$$ds^2=-(1-\frac{2M}{r})dt^2+(1-\frac{2M}{r})^{-1}dr^2+r^2(d\theta^2+sin^2\theta d\phi^2);$$
and the Kerr metric,
\begin{equation*}
\begin{split}
ds^2=&-(1-\frac{2Mr}{\Sigma})dt^2-\frac{4Marsin^2\theta}{\Sigma}dtd\phi+\frac{\Sigma}{\Delta}dr^2+\Sigma d\theta^2\\
&+(r^2+a^2+\frac{2Ma^2rsin^2\theta}{\Sigma})sin^2\theta d\phi^2.
\end{split}
\end{equation*}

Throughout this paper, we assume that the spacetime $(V^{(4)},g^{(4)})$ is $globally$ $hyperbolic$, i.e. it admits a Cauchy surface $\Sigma$. The topology of a globally hyperbolic spacetime $V^{(4)}$ is necessarily $\Sigma\times \mathbb{R}$. In this case, we can define a global time function $t$ on $V^{(4)}$ so that every surface of constant $t$ is a Cauchy surface. When the spacetime $(V^{(4)},g^{(4)})$ is stationary in addition, we choose local coordinates $\{t,x^i\}~(i=1,2,3)$ so that $\partial_t$ is the time-like Killing field. Since $\partial_t$ generates a $\mathbb R-$ action of isometries on the spacetime, $g^{(4)}$ is determined by its value on the initial time slice, which will be denoted as $M=\{t=0\}$ in the following.  

We recall two well-known formulations of the stationary vacuum field equations --- the hypersurface formalism and the projection formalism.

In the hypersurface formalism, the stationary spacetime metric $g^{(4)}$ can be written globally in the form
\begin{equation}
g^{(4)}=-N^2dt^2+(g_M)_{ij}(dx^i+Y^idt)(dx^j+Y^jdt),
\end{equation}
Since $\partial_t$ is a Killing vector field, in the expression above the tensor fields $(g_M,N,Y)$ are all independent of $t$. So they can be regarded as tensor fields on $M$. The scalar field $N$ is called the lapse function and the vector field $Y=Y^i\partial_{x_i}$ is the shift vector. The symmetric tensor $g_M=(g_M)_{ij}dx^i dx^j$ is exactly the induced Riemannian metric of $M\subset(V^{(4)}, g^{(4)})$.

The stationary spacetime $(V^{(4)},g^{(4)})$ is vacuum if and only if the following stationary vacuum field equations hold for $(N,Y,g_M)$ on $M$, cf.[M], 
\begin{equation}
\begin{cases}
2NK-L_Yg=0,\\
Ric_{g_M}+(trK)K-2K^2-\frac{1}{N}D^2 N+\frac{1}{N}L_YK=0,\\
s_{g_M}+(trK)^2-|K|^2=0,\\
\delta K+d(trK)=0.
\end{cases}
\end{equation}
In the system above, $D^2N$ denotes the Hessian of function $N$ with respect to the metric $g_M$; $s_{g_M}$ denotes the scalar curvature of the metric $g_M$ on $M$; and $K$ is the second fundamental form of the hypersurface $M\subset (V^{(4)},g^{(4)})$.

It is known and easy to see that when the hypersurface $M$ is a closed $3-$manifold, there are no non-flat stationary vacuum solutions to the field equations (1.3). Lichnerowicz (cf.[L]) proved that a geodesically complete stationary vacuum spacetime is necessarily Minkowski spacetime $(\mathbb R^4, g_{\text{Min}})$ when the $3-$manifold $M$ is complete and asymptotically flat, cf. also [A2] for a generalization with no asymptotic condition.

Thus nontrivial solutions of $(1.3)$ only exist on 3-manifolds with nonempty boundary.  The standard case is when $M$ is diffeomorphic to $B^3$ (the interior case) or $\mathbb{R}^3\setminus B^3$ (the exterior case), where $B^3$ is the unit $3-$ball. This paper concerns the exterior case with asymptotical flatness conditions, cf.$\S$ 2.1.

Since the boundary $\partial M$ is nonempty, the issue of boundary conditions arises. In [B1], through a Hamiltonian analysis of the vacuum Einstein equation, Bartnik proposed a collection of boundary data given by 
$$(g_M^T, H_{g_M}, K(\mathbf n)^T, tr^TK),$$ 
which consists of the induced metric $g_M^T$ on the boundary $\partial M\subset(M,g_M)$, the mean curvature $H_{g_M}$ of $\partial M\subset (M,g_M)$, the mixed (perpendicular and tangential) part $K(\mathbf{n})^T$ of $K$ on $\partial M$, and the tangential trace $tr^TK$ of the second fundamental form $K$ along $\partial M$.

In [B1], Bartnik conjectured that among all the admissible asymptotically flat extensions of such boundary data to $\mathbb{R}^3\setminus B^3$, the infimum of the ADM mass is realized by one arising from a stationary vacuum spacetime -- that is a set of data $(g_M,N,Y)$ on $M$ satisfying the stationary vacuum Einstein field equations (1.3). A natural question raised in [B1] is whether the Bartnik boundary conditions are elliptic for the stationary vacuum Einstein field equations. This is one of the main motivations for the present paper. 

To check the ellipticity of boundary problems for the stationary vacuum equations, it is essential to find the right gauge terms and compute the principal symbols. However, it is very complicated to carry out this work with the system (1.3) in the hypersurface formalism. So alternatively, we will use the projection formalism.

In the projection formalism, we use $S$ to denote the collection of all trajectories of the Killing field $\partial_t$ in $(V^{(4)},g^{(4)})$, i.e. $S$ is the orbit space of the action of 1-parameter group $\mathbb{R}$ generated by $\partial_t$. Since the spacetime is globally hyperbolic, the qoutient space S is  a smooth 3-manifold and the metric $g^{(4)}$ restricted to the horizontal distribution --- the orthogonal complement of $\text{span}\{\partial_t\}$ in $TV^{(4)}$ --- induces a well-defined Riemannian metric $g_S$ on $S$. Let $\pi: V^{(4)}\rightarrow S$ denote the projection map, then metric $g^{(4)}$ is globally of the form
\begin{equation*}
g^{(4)}=-e^{2u}(dt+\pi^*\theta)^2+\pi^*g_S.
\end{equation*}
Here $\theta$ is a 1-form on $S$. It is shown in [G] that the pull back map $\pi^*$ gives a one-to-one correspondence between tensor fields $T^{'b...d}_{a...c}$ on $S$ and tensor fields $T^{b...d}_{a...c}$ on $V^{(4)}$ which satisfy
\begin{equation*}
(\partial_t)^aT^{b...d}_{a...c}=0,...,(\partial_t)_dT^{b...d}_{a...c}=0\text{ and }L_{\partial_t}T^{b...d}_{a...c}=0.
\end{equation*}
In the following we will omit the notation $\pi^*$ and identify tensor fields being on $S$ as tensor fields in $V^{(4)}$ satisfying the conditions above. So the spacetime metric can be written in a simpler form
\begin{equation}
g^{(4)}=-e^{2u}(dt+\theta)^2+g_S.
\end{equation}
The dual of the Killing vector field $\partial_t$ with respect to $g^{(4)}$ is given by $\xi=-e^{2u}(dt+\theta)$. 
The $twist$ $tensor$ $\omega$ is defined as 
\begin{equation}
\omega=\frac{1}{2}\star_{g^{(4)}} (\xi\wedge d\xi),
\end{equation}
where $\star_{g^{(4)}}$ denotes the hodge star operator of the metric $g^{(4)}$. The twist tensor provides a measurement of the integrability of the horizontal distribution $TS$ in $V^{(4)}$. It is actually a $1-$form on $S$ in the way discussed above, because equation $(1.5)$ is equivalent to 
$$\omega=-\frac{1}{2}e^{3u}\star_{g_S} d\theta.$$

If we perform a reparametrization of the time
$t'=t+f,$
where $f$ is a function on $S$, the formula $(1.4)$ becomes 
$$g^{(4)}=-e^{2u}(dt'+\theta')+ g_S,$$
with $\theta'=\theta -df$. It is easy to see the twist tensor $\omega$ remains invariant under this gauge transformation. Therefore, a stationary spacetime $(V^{(4)}, g^{(4)})$ corresponds uniquely to a collection of data $(g_S,u,d\theta)$ or $(g_S,u,\omega)$ on the quotient manifold $S$.  We refer to [K] and [CH] for more details of the projection formalism.

Notice that the restriction ($\pi|_{M}:M\rightarrow S$) of the projection $\pi$, gives a diffeomorphism between the hypersurface $M$ and the quotient manifold $S$. Thus boundary value problems in the setting $\{M,(g_M,N,Y)\}$ can be transferred to equivalent boundary value problems of $\{S,(g_S,u,\omega)\}$ via this diffeomorphism and vice versa. In certain respects, the projection formalism is more canonical, since there are many distinct hypersurfaces giving rise to the same stationary solution on the $4-$manifold, but the projection data is unique.

The stationary vacuum field equations in the projection formalism, which are equivalent to $(1.3)$ in the hypersurface formalism, are given by, cf.[H1],[H2],
\begin{equation*}
\begin{cases}
Ric_{g_S}-D^2u-(du)^2-2e^{-4u}(\omega\otimes\omega-|\omega|^2g_S)=0,\\
\Delta_{g_S} u-|du|^2-2e^{-4u}|\omega|^2=0,\\
\delta\omega+3 \langle du,\omega\rangle=0,\\
d\omega=0.
\end{cases}
\end{equation*}

Here $\Delta_{g_S}$ denotes the geometric Laplace operator of the metric $g_S$, i.e. $\Delta_{g_S} u=-tr_{g_S}D^2u$.
The last equation indicates that $\omega$ is exact. In the case $S\cong \mathbb R^3\setminus B^3$, we can assume $\omega=d\phi$ for some function $\phi$ on $S$. Thus the system above can be expressed equivalently as,
\begin{equation}
\begin{cases}
Ric_{g_S}-D^2u-(du)^2-2e^{-4u}(d\phi\otimes d\phi-|d\phi|^2g_S)=0,\\
\Delta_{g_S} u-|du|^2-2e^{-4u}|d\phi|^2=0,\\
\Delta_{g_S}\phi+3 \langle du,d\phi\rangle=0.
\end{cases}
\end{equation}
With the system above, the work of choosing proper gauge terms and dealing with the principal symbols turns out to be much easier, compared with the system $(1.3)$. Thus it is of interest to study the ellipticity of the system $(1.6)$ with geometrically natural prescribed boundary conditions on the quotient manifold $S$. 

Rather than transforming the Bartnik boundary conditions from the slice $M$ to $S$, here we analyze some simpler boundary conditions arising naturally from the projection formalism. In view of the Bartnik conditions, we first choose to prescribe $(g^T_S,H_{g_S})$ --- the induced metric $g_S^T$ on the boundary $\partial S\subset(S,g_S)$ and  the mean curvature $H_{g_S}$ of $\partial S\subset (S,g_S)$. In addition, we pose a restriction on the twist tensor $\omega$, by prescribing $\mathbf n_{g_S}(\phi)$ on $\partial S$, where $\mathbf{n}_{g_S}$ is the unit normal vector of the boundary pointing outwards.  Actually the collection of boundary data,
\begin{equation}
\{g_S^T,H_{g_S}, \mathbf{n}_{g_S}(\phi)\},
\end{equation}
also arises naturally from the boundary terms in the variation of a functional on $S$, which comes from the reduction of the Einstein Hilbert action from $V^{(4)}$ to $S$, c.f.\S 3.

The first main theorem we will prove is the ellipticity of the boundary data (1.7).
\begin{theorem}
The stationary vacuum field equations $(1.6)$ and boundary conditions $(1.7)$ form an elliptic boundary value problem, modulo gauge transformations.
\end{theorem}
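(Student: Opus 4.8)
The plan is to verify the Lopatinski--Shapiro (complementing) condition for the boundary value problem obtained by adjoining suitable gauge-fixing terms to the field equations $(1.6)$. First I would fix a gauge: since the system $(1.6)$ is diffeomorphism-covariant in the pair $(g_S,u)$ and underdetermined, the principal symbol of the unmodified operator has a nontrivial kernel (the Lie-derivative directions $L_X g_S$, together with the action on $u,\phi$). The standard remedy is to use a Bianchi-type gauge: replace $\mathrm{Ric}_{g_S}$ in the first equation by $\mathrm{Ric}_{g_S}+\delta^*_{g_S}\bigl(\beta_{g_S}(\dot g_S)+\text{lower order}\bigr)$, i.e. add the linearized harmonic/de Donder gauge term $\delta^*(\delta \dot g + \tfrac12 d\,\mathrm{tr}\,\dot g)$ (or the variant adapted to the presence of $u$), so that the modified principal symbol becomes (block-)diagonal and elliptic in the interior. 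One must check that this gauge is compatible with the boundary data $(1.7)$ — i.e. that the gauge-fixed system together with $(1.7)$ plus the natural "gauge boundary conditions" (fixing the tangential part of $X$ at $\partial S$, say) still has the right count of conditions, $2N$ scalar conditions for an operator of order $2$ acting on an $N$-tuple.

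Next I would freeze coefficients at an arbitrary boundary point $p\in\partial S$, choosing coordinates $(x^1,x^2,x^3)$ with $x^3$ the inward normal direction and $g_S(p)=\mathrm{Id}$, and pass to the constant-coefficient half-space problem on $\{x^3>0\}$. After Fourier transforming in the tangential variables $x'=(x^1,x^2)$ with dual variable $\zeta\in\mathbb R^2\setminus\{0\}$, the interior equations become a system of second-order constant-coefficient ODEs in $x^3$; ellipticity of the (gauge-fixed) interior operator guarantees that the space of solutions decaying as $x^3\to+\infty$ has dimension exactly equal to the number of boundary conditions. The Lopatinski--Shapiro condition is then the statement that the boundary operator — the linearization of $(g_S^T,H_{g_S},\mathbf n_{g_S}(\phi))$ together with the gauge boundary conditions — restricted to this decaying-solution space is an isomorphism onto the data space, for every $\zeta\ne0$ and every boundary point. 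Concretely this means: if a decaying solution has vanishing linearized tangential metric $\dot g^T$, vanishing linearized mean curvature $\dot H$, vanishing $\mathbf n(\dot\phi)$ (suitably linearized), and vanishing gauge data, then it is identically zero.

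The computation itself I would organize by exploiting the near-decoupling of the system: the $\phi$-equation is scalar (a Laplace-type operator with the Robin-type datum $\mathbf n(\phi)$), the $u$-equation is scalar with Dirichlet-type behaviour coming through $H_{g_S}$ after the gauge is fixed, and the metric equation, once the de Donder-type gauge term is installed, behaves like the linearized Einstein operator whose ellipticity with Dirichlet-type conditions on $(g^T,H)$ is essentially the content of the Anderson--Khuri / Bartnik analysis of the static-vacuum case. So I would reduce, modulo lower-order and coupling terms that do not affect the symbol, to three model half-space problems and invoke (or reprove by the explicit $e^{-|\zeta|x^3}$ and $x^3 e^{-|\zeta|x^3}$ basis) the injectivity on the decaying space in each. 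The coupling terms $(du)^2$, $e^{-4u}\,d\phi\otimes d\phi$, etc., are all of order $\le 1$ in the principal sense relative to the second-order system, so they drop out of the symbol and only the diagonal blocks matter.

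The main obstacle I expect is precisely the interaction between the diffeomorphism gauge and the boundary conditions $(1.7)$: unlike the Dirichlet problem for the full metric $g_S^T$ (which pins down enough of the tangential data to kill the tangential gauge freedom at the boundary), prescribing only $(g_S^T,H_{g_S})$ leaves the normal component of the gauge vector field and one scalar combination underdetermined a priori, so one must choose the gauge boundary conditions carefully — typically $X^T|_{\partial S}=0$ and a Neumann-type condition on $X^3$, or a condition tying $X^3$ to $\dot N/\dot u$ — so that (a) the full boundary operator is elliptic and (b) the residual gauge solutions (those with $L_Xg_S$ solving the homogeneous problem) are themselves forced to vanish or are accounted for in the phrase "modulo gauge transformations". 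Getting this bookkeeping exactly right, and checking that the resulting square system has an invertible Lopatinski matrix for all $\zeta$, is where the real work lies; everything else is symbol algebra.
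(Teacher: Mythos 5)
Your overall framework (gauge-fix, freeze coefficients, verify the Agmon--Douglis--Nirenberg/Lopatinski--Shapiro complementing condition) is the same as the paper's, but there are two genuine gaps. First, your reduction to ``three model half-space problems'' where ``only the diagonal blocks matter'' is not valid for the system $(1.6)$ as written: the first equation contains $-D^2u$ (and, after combining with the trace, $(\Delta_{g_S}u)\,g_S$), which is \emph{second order} in $u$ and therefore sits in the principal symbol of the metric equation. It is not one of the order-$\le 1$ coupling terms like $(du)^2$ or $e^{-4u}d\phi\otimes d\phi$ that you may discard, so the symbol does not block-diagonalize into separate metric, $u$, and $\phi$ problems. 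The paper's missing ingredient here is the conformal transformation $g=e^{2u}g_S$ (\S 2.2), which eliminates the Hessian of $u$ from the metric equation and produces system (I)/(II) whose principal part genuinely is $(\mathrm{Ric}_g,\Delta_g u,\Delta_g\phi)$; Theorem 1.1 is then obtained at the end (\S 3.3) by writing the operator for $(g_S,u,\phi)$ as a composition $\mathcal P_S=\mathcal P_2\circ\mathcal Q$ with the conformal isomorphism $\mathcal Q$. Without this step (or an equivalent algebraic device) your decoupling argument fails at the very first move.

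Second, the part you yourself flag as ``where the real work lies'' --- the interaction of the gauge with the boundary conditions and the invertibility of the Lopatinski matrix --- is precisely the content of the proof and is not supplied. The paper does not impose conditions on a gauge vector field $X$ (tangential Dirichlet plus a Neumann-type condition on $X^3$, as you suggest); it imposes the three scalar conditions $G=0$ on $\partial S$ for the gauge one-form ($\beta_{\tilde g}g$ or $\delta_{\tilde g}g$), which together with $e^{-2u}g^T$, $H-2\mathbf n(u)$ and $\mathbf n(\phi)$ gives exactly $3+3+1+1=8$ boundary conditions for the $8\times 8$ second-order system, and then verifies nondegeneracy of the full coupled boundary symbol matrix at $\xi=\eta+i|\eta|\mu$ by explicit linear algebra (equations (3.2)--(3.9)). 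Note in particular that the boundary problem does not split into scalar Robin/Dirichlet model problems even after the conformal change: the linearized data couple $h$ and $v$ through $e^{-2u}(h-2vg)^T$ and $H'_h-2\mathbf n(v)$, and the verification genuinely uses this coupling. A further point your plan does not reach: for the later application the paper needs the self-adjoint divergence gauge, whose boundary symbol is too complicated for a direct check; it is handled by an intermediate operator $\hat P=(L_1,B_2)$, a Schauder-type estimate, and formal self-adjointness (Lemma 3.2, Lemma 3.4, Proposition 3.5). As it stands, your proposal is a correct plan of attack but omits both the device that makes the symbol computation tractable and the computation itself.
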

To prove this theorem, we first present in \S 2 the conformal transformation of the vacuum field equations, which gives a differential operator with simpler symbols. For the purpose of ellipticity, we modify the equations using certain gauge terms. After that, in \S3 ellipticity of the boundary conditions (1.7) is proved with respect to different choices of gauge terms.

\medskip
$Remark.$ The method we use to prove ellipticity in this paper can be applied to more general boundary value problems for the stationary vacuum field equations. 
\medskip

In \S4 we prove a manifold structure theorem for the moduli space $\mathcal E_C=\mathcal E_C^{m,\alpha}$ of stationary vacuum spacetimes. The space $\mathcal E_C$ is basically the space of all $C^{m,\alpha}$ asymptotically flat stationary vacuum solutions to the system $(1.6)$ on $S$ modulo the action of the group $\mathcal D_0^{m+1,\alpha}(S)$ of diffeomorphisms on $S$ that are equal to the identity map when restricted on $\partial S$. In addition, based on the boundary conditions $(1.7)$, we have a natural map $\Pi$, from the moduli space $\mathcal E_C$ to the space of boundary data defined as follows,
\begin{equation}
\begin{split}
\Pi: &\mathcal E_C\rightarrow Met^{m,\alpha}(\partial S)\times C^{m-1,\alpha}(\partial S)\times C^{m-1,\alpha}(\partial S),\\
&\Pi[(g_S,u,\phi)]=(g_S^T,H_{g_S},\mathbf n_{g_S}(\phi)).
\end{split}
\end{equation}
Here $Met^{m,\alpha}(\partial S)$ is the space of $C^{m,\alpha}$ metrics on $\partial S$; $C^{m-1,\alpha}(\partial S)$ is the space of $C^{m-1,\alpha}$ functions on $\partial S$.
By applying the ellipticity result, we will prove the following theorem.
\begin{theorem}
The moduli space $\mathcal E_C$ is an infinite dimensional $C^{\infty}$ Banach manifold, and the map $\Pi$ is $C^{\infty}$ smooth and Fredholm, of Fredholm index 0.
\end{theorem}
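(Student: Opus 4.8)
The plan is to realize $\mathcal E_C$ locally as the zero set of a gauge-fixed version of the field operator $(1.6)$ and to apply the Banach-space implicit function theorem, with the ellipticity result (Theorem~1.1) supplying the needed Fredholm property. First I would fix the functional-analytic framework: on $S\cong\mathbb R^3\setminus B^3$ I take weighted H\"older spaces $C^{m,\alpha}_\delta$ realizing the asymptotic flatness conditions of \S2.1, with decay exponent $\delta$ chosen non-exceptional, so that the model operator at infinity --- the flat Laplacian on $\mathbb R^3$ --- is an isomorphism between the corresponding weighted spaces (standard weighted elliptic theory on asymptotically flat manifolds; cf.\ Lockhart--McOwen, Bartnik). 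Let $\Phi_0$ denote the left-hand side of $(1.6)$ and $\Phi=\Phi_0+(\text{gauge terms})$ the modified operator used in \S2--\S3. Both $\Phi$ and $\Pi$ are given by universal expressions polynomial in the fields, their first two derivatives, and $g_S^{-1}$, and therefore define $C^\infty$ (indeed real-analytic) maps between the relevant Banach spaces, by the Banach-algebra and composition properties of weighted H\"older spaces. I would also record the usual slice statement: by ellipticity of the associated Bianchi-type gauge operator and the structure of $\mathcal D_0^{m+1,\alpha}(S)$, near a solution $\gamma$ every nearby solution of $\Phi_0=0$ is carried into the gauge slice $\{\Phi=0\}$ by a unique small gauge transformation.

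Next, at $\gamma\in\mathcal E_C$ I would study the linearization $L=D\Phi_\gamma$ together with the boundary operator $T=D\Pi_\gamma$ supplemented by the linearized gauge boundary conditions. By the proof of Theorem~1.1 the pair $(L,T)$ is an elliptic boundary value problem, and since $L$ agrees at infinity to leading order with the Fredholm model, $(L,T)\colon C^{m,\alpha}_\delta\to C^{m-2,\alpha}_{\delta-2}\times(\text{boundary data space})$ is Fredholm. To show its index is $0$, I would deform the interior and boundary symbols of $(L,T)$, through elliptic boundary value problems satisfying the Lopatinskii--Shapiro condition and staying within the non-exceptional weight range, to a decoupled model --- scalar Laplace operators on the asymptotically flat manifold with Dirichlet/Robin conditions --- whose index vanishes for the chosen decay, and then invoke homotopy invariance of the index. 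Conceptually this reflects the fact that $(g_S^T,H_{g_S},\mathbf{n}_{g_S}(\phi))$ is precisely the ``configuration half'' of the full Cauchy data appearing in the boundary terms of the variational problem of \S3, so that $(L,T)$ is formally self-adjoint modulo lower-order terms.

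The manifold structure then follows formally. Standard elliptic theory up to the boundary (e.g.\ via the Calder\'on projector, together with solvability of an auxiliary Dirichlet problem for $L$) shows that $L$ by itself, with no boundary condition imposed, is surjective onto $C^{m-2,\alpha}_{\delta-2}$ and has kernel $\mathcal K_\gamma$ a closed, complemented, infinite-dimensional subspace. The implicit function theorem then makes $\{\Phi=0\}$ a $C^\infty$ Banach submanifold near $\gamma$ with tangent space $\mathcal K_\gamma$; combined with the slice statement above and uniqueness of the gauge fixing, this produces $C^\infty$ charts for $\mathcal E_C$, which is thus an infinite-dimensional $C^\infty$ Banach manifold. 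In such a chart $D\Pi_{[\gamma]}$ is the restriction $T|_{\mathcal K_\gamma}\colon\mathcal K_\gamma\to(\text{boundary data space})$, and since $L$ is surjective one checks $\ker(T|_{\mathcal K_\gamma})=\ker(L,T)$ and $\operatorname{coker}(T|_{\mathcal K_\gamma})\cong\operatorname{coker}(L,T)$; hence $\Pi$ is smooth and Fredholm with $\operatorname{ind}\Pi=\operatorname{ind}(L,T)=0$.

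The hard part will be the index computation, i.e.\ the claim $\operatorname{ind}(L,T)=0$: one must verify that the homotopy from $(L,T)$ to the decoupled model stays within elliptic boundary value problems satisfying Lopatinskii--Shapiro and within the Fredholm (non-exceptional) weight range at all times, and that the gauge-fixed scalar model really has vanishing index for the asymptotically flat decay in use --- equivalently, that the count and the pairing of the boundary functions $(g_S^T,H_{g_S},\mathbf{n}_{g_S}(\phi))$ are exactly right. The remaining ingredients --- smoothness of $\Phi$ and $\Pi$, the slice theorem, surjectivity and complementedness of $L$, and the implicit function theorem --- are routine given Theorem~1.1 and weighted elliptic theory on asymptotically flat manifolds with boundary.
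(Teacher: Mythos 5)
Your high-level scheme (gauge-fix, linearize, use ellipticity and an implicit function theorem, identify $D\Pi$ with the restriction of the boundary operator to the kernel) matches the paper's, but there is a genuine gap at the core. The manifold you construct is $\{\Phi=0\}$ with \emph{no} boundary condition on the gauge, and that set is not locally $\mathcal E_C$. In the paper the chart for $\mathcal E_C$ is $\mathbf Z_C$, the zero set of the gauged operator \emph{together with} the boundary condition $\delta_{\tilde g}g=0$ on $\partial S$: only with that boundary condition does Lemma 2.4 (take the divergence, get $\delta\delta^*G_2=0$ in $S$, $G_2=0$ on $\partial S$, integrate by parts) force the gauge term to vanish identically, so that zeros of the gauged system are genuine stationary vacuum solutions. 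If you drop the boundary condition, the zero set contains spurious non-vacuum solutions, and your slice statement does not repair this: it moves vacuum solutions into the slice but neither shows that the slice inside $\{\Phi=0\}$ is a submanifold nor that nearby zeros of $\Phi$ are gauge-equivalent to vacuum data. What is actually required is surjectivity of $D\Phi$ on the constrained space $T\mathcal M_C=\{\delta_{\tilde g}h=0 \text{ on }\partial S\}$, i.e.\ solvability of the linearized gauged equations \emph{with} the gauge boundary condition --- this is the paper's Theorem 4.2, and it is precisely the step your proposal treats as routine.

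That step is not standard elliptic theory. The paper proves it by restricting to the full elliptic boundary value problem $D\hat\Phi$ on $T\mathcal M_S$, which is Fredholm of index $0$ by formal self-adjointness (so, contrary to your assessment, the index computation is the easy part, coming from the variational structure in Proposition 3.5 and \S 5.4, with no symbol homotopy needed), and then showing that any would-be cokernel element $(k,w,\zeta)$ satisfies the overdetermined conditions (4.2). These are \emph{not} full vanishing Cauchy data for $k$ (the components $k(\mathbf n,\mathbf n)$ and $k(\mathbf n)^T$ are unconstrained), so a Calder\'on-projector or Aronszajn-type unique continuation argument does not apply directly; the paper must pass to H-harmonic coordinates, invoke the Anderson--Herzlich unique continuation theorem to conclude $(k,w,\zeta)$ is pure gauge near $\partial S$ (Propositions 4.3--4.4), and then use the isomorphism property of $\delta\delta^*$ on vector fields vanishing at $\partial S$, together with decay estimates, to conclude it vanishes globally --- a contradiction yielding surjectivity. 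None of this appears in your proposal, and even your auxiliary claim that the free operator $L$ (no boundary conditions) is surjective implicitly needs a unique continuation input for the adjoint. (Minor: in the paper's weight convention the target space is $C^{m-2,\alpha}_{\delta+2}$, not $C^{m-2,\alpha}_{\delta-2}$.)
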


The theorems we prove in this paper are generalizations of the results proved in [AK], where spacetimes are static. Related work can be found in [J],[M1-3],[R] and elsewhere. In a sequel to this work, we plan to discuss ellipticity of the more complicated Bartnik boundary conditions. \\

\textbf{Acknowledgements}: I would like to express great thanks to my advisor Michael Anderson for suggesting this problem and for valuable discussions and comments.

\section{Background Discussion}
\subsection{Asymptotic flatness}$~$

Throughout this paper, the 3-manifold $S$ is assumed to be diffeomorphic to $\mathbb{R}^3\setminus B^3$, with $B^3$ the unit $3-$ball. When it goes to the infinite end of $S$, we assume that the data $(g_S,u,\phi)$ is asymptotically flat, in the sense that
$$ g-g_{F}\rightarrow 0,~u\rightarrow 0,~\phi\rightarrow 0,~~~~ as~r\rightarrow \infty$$
where $g_{F}$ is the flat metric on $\mathbb{R}^3\setminus B^3$ and $r$ is the radius function on $S$ obtained by pulling back the radius function on $\mathbb{R}^3\setminus B^3$ under a fixed diffeomorphism. To describe rigorously the decay behavior above, we use the weighted H\"older spaces defined as follows, cf. [B2], [LP].
\begin{definition} 
On the manifold $S\cong\mathbb R^3\setminus B^3$ which is equipped with local coordinates $\{x^i\}~(i=1,2,3)$ and a radius function $r$, we define several Banach spaces for $m\in\mathbb{N}$, and $\alpha,\delta\in\mathbb{R}$:
\begin{equation*}
    \begin{split}
&C^m_{\delta}(S)=\{\text{functions}~v~\text{on}~S: ||v||_{C^m_{\delta}}=\Sigma_{k=0}^msup~r^{k+\delta}|\nabla^kv|<\infty\},\\
&C^{m,\alpha}_{\delta}(S)=\{\text{functions}~v~\text{on}~S:\\ &\quad\quad\quad\quad\quad||v||_{C^m_{\delta}}+sup_{x,y}[\text{min}(r(x),r(y))^{m+\alpha+\delta}
\frac{\nabla^mv(x)-\nabla^mv(y)}{|x-y|^{\alpha}}]
<\infty\},\\
&Met^{m,\alpha}_{\delta}(S)=\{\text{metrics}~g~\text{on}~S: (g_{ij}-\delta_{ij})\in C^{m,\alpha}_{\delta}(S)\},\\
&T^{m,\alpha}_{\delta}(S)=\{\text{vector fields}~X~\text{on}~S: X^i\in C^{m,\alpha}_{\delta}(S)\},\\
&(T_p)^{m,\alpha}_{\delta}(S)=\{p-\text{tensors}~\tau~\text{on}~S: \tau_{i_1i_2..i_p}\in C^{m,\alpha}_{\delta}(S)\},\\
&(\wedge_p)^{m,\alpha}_{\delta}(S)=\{p-\text{forms}~\sigma~\text{on}~S: \sigma_{i_1i_2..i_p}\in C^{m,\alpha}_{\delta}(S)\},\\
&(S_2)^{m,\alpha}_{\delta}(S)=\{\text{symmetric }2-\text{forms}~h~\text{on}~S: h_{ij}\in C^{m,\alpha}_{\delta}(S)\}.
    \end{split}
\end{equation*}

\end{definition}
\begin{definition}
The data $(g_S,u,\phi)$ is called \textit{asymptotically flat of order} $\delta$ if 
\begin{equation}
(g_S,u,\phi)\in [Met^{m,\alpha}_{\delta}\times C^{m,\alpha}_{\delta}\times C^{m,\alpha}_{\delta}](S),
\end{equation}
for some $m,~\alpha$ and $\delta$.
\end{definition}
Throughout the following, the orders $m,\alpha$ and the decay rate $\delta$ are fixed, and chosen to satisfy,
 $$m\geq 2,~0<\alpha<1,~\frac{1}{2}<\delta<1.$$

$Remark.$ In the previous section, we introduced the diffeomorphism  $\pi|_{M}:M\rightarrow S$ between the hypersurface $M$ and the quotient space $S$. In fact, under this diffeomorphism, the asymptotic flatness condition (2.1) of
 $(g_S,u,\phi)$ in $S$ is equivalent to the asymptotic condition in $M$:
 $$(g_M,N,Y)\in [Met^{m,\alpha}_{\delta}\times C^{m,\alpha}_{\delta}\times (\wedge_1)^{m,\alpha}_{\delta}](M),$$
which, furthermore, is equivalent to the decay behavior as in Bartnik's work.

\subsection{Conformal transformation}$~~$

To simplify the symbols of the stationary field equations $(1.6)$, we first apply a conformal transformation
\begin{equation}
    g=e^{2u}g_S. 
\end{equation}
Under such a transformation, the data $(g_S,u,\phi)$ is in 1-1 correspondence to the triple $(g,u,\phi)$; and if $(g_S,u,\phi)$ is asymptocially flat as defined in (2.1), then so is the data $(g,u,\phi)$, i.e.
$$(g,u,\phi)\in [Met^{m,\alpha}_{\delta}\times C^{m,\alpha}_{\delta}\times C^{m,\alpha}_{\delta}](S).$$ 

Furthermore, the stationary vacuum field equations (1.6), which are expressed in terms of $(g_S,u,\phi)$, can be simplified equivalently to the following system with unknowns $(g,u,\phi)$, cf. [K],
$$\text{(I)}~~\begin{cases}
Ric_g-2du\otimes du-2e^{-4u}d\phi\otimes d\phi=0,\\
\Delta_g u-2e^{-4u}|d\phi|^2=0,\\
\Delta_g \phi+4\langle du,d\phi\rangle=0.
\end{cases}$$
Here the norm $|d\phi|$ and the inner product $\langle du,d\phi\rangle$ are all with respect to the metric $g$. We point it out that the field equations above can be expressed in another equivalent way, where the Ricci tensor in $Ric_g$ is replaced by the Einstein tensor $Ein_g=Ric_g-\frac{1}{2}s_gg$. In fact, the trace of the first equation in (I) is given by
$$s_{g}-2|du|^2-2e^{-4u}|d\phi|^2.$$
Let $T_g$ be as, $$T_g=\frac{1}{2}(s_{g}-2|du|^2-2e^{-4u}|d\phi|^2)g.$$
Then it is easy to see that, system (I) is equivalent to the following system (II) by inserting $T_g$ into the first equation,
$$\text{(II)}\begin{cases}
Ric_g-2du\otimes du-2e^{-4u}d\phi\otimes d\phi-T_g=0,\\
\Delta_g u-2e^{-4u}|d\phi|^2=0,\\
\Delta_g \phi+4\langle du,d\phi\rangle=0.
\end{cases}
$$
Rearranging the order of terms, we can rewrite the first equation in (II) as
\begin{equation}
(Ric_g-\frac{1}{2}s_gg)-2du\otimes du-2e^{-4u}d\phi\otimes d\phi+(|du|^2+e^{-4u}|d\phi|^2)g=0,
\end{equation}
where the leading term --- the term with highest order of derivative of $g$ --- is exactly the Einstein tensor $(Ric_g-\frac{1}{2}s_gg)$.

Observe that the system (I) (or (II)) is not elliptic, because the full system is invariant under diffeomorphisms, i.e. if $(g,u,\omega)$ solves the vacuum Einstein field equations, then the pull back data $(\Psi^*g,\Psi^*u,\Psi^*\omega)$ under some diffeomorphism $\Psi$ on $S$, is also a solution. So to ensure ellipticity, as is usual we modify the system using gauge terms, and obtain
$$\text{(III)}\begin{cases}
Ric_g-2du\otimes du-2e^{-4u}d\phi\otimes d\phi+T+\delta^{\ast}G=0,\\
\Delta_g u -2e^{-4u}|d\phi|^2=0,\\
\Delta_g \phi+4\langle du,d\phi\rangle=0.
\end{cases}
$$

We can set the pair $(T,G)$ to be  
$$
\begin{cases}
T_1=0,\\
G_1=\beta_{\tilde{g}}(g),
\end{cases}
$$
 where $\tilde{g}$ is a reference metric near $g$ in $Met^{m,\alpha}_{\delta}(S)$, and $\beta$ is the Bianchi operator defined as,
 $\beta_{\tilde g}g=\delta_{\tilde g}g+\frac{1}{2}dtr_{\tilde g}g$, with $\delta=-tr\nabla$ being the divergence operator. 
 This choice is to insert $G_1=\beta_{\tilde{g}}(g)$ (the Bianchi gauge) into the system (II).
 
 An alternative way is to set $(T,G)$ as follows,
$$\begin{cases}
T_2=-T_g,\\
G_2=\delta_{\tilde{g}}(g).
\end{cases}$$
This corresponds to inserting $G_2=\delta_{\tilde{g}}(g)$ (the divergence gauge) into (II).

We will be concerned with both the distinct choices of gauge terms here. In the case $(T,G)=(T_1,G_1)$, the principal symbols of the system (III) are simple and ellipticity can be proved by straightforward computation. However, such a system is not self-adjoint, which makes it not suitable for the proof of the manifold theorem in \S4. On the other hand, the system (III) with  $(T,G)=(T_2,G_2)$ is formally self-adjoint, whereas its principal symbols are much more complicated. We will use the ellipticity result of the case $(T,G)=(T_1,G_1)$ to prove the ellipticity for the gauge choice $(T_2,G_2)$. We refer to \S3 for more details.

Since the boundary $\partial S$ is not empty, it is necessary to include a boundary condition for the gauge term $G$. A convinient choice is
\begin{equation}
G=0~~\quad\quad~~\text{on}~\partial S.
\end{equation}
In the following we will prove that, equipped with this boundary restriction, solutions to the gauged system (III) with $(T,G)=(T_2,G_2)$, correspond to solutions to the stationary vacuum system (II) modulo diffeomorphisms.
\subsection{Moduli space of stationary vacuum spacetimes}$~~$

We begin by defining the following spaces of stationary vacuum spacetimes.
\begin{definition}
\begin{equation*}
\begin{split}
\mathbb{E}_C&=\{(g,u,\phi)\in [Met_{\delta}^{m,\alpha}\times C_{\delta}^{m,\alpha}\times C_{\delta}^{m,\alpha}](S):~(g,u,\phi)\text{ solves (II)}\},\\ 
\mathbf{Z}_C&=\{(g,u,\phi)\in [Met_{\delta}^{m,\alpha}\times C_{\delta}^{m,\alpha}\times C_{\delta}^{m,\alpha}](S):~\\
&(g,u,\phi)\text{ solves (III) where}~(T,G)=(T_2,G_2)\text{ and }G_2=0\text{ on the boundary }\partial S\}.
\end{split}
\end{equation*}
\end{definition}
The following lemma shows that $\mathbf{Z}_C\subset \mathbb{E}_C$.
\begin{lemma}
Elements in $\mathbf{Z}_C$ are elements in $\mathbb{E}_C$ which satisfy the divergence gauge condition $\delta_{\tilde g}g=0$ in $S$.
\end{lemma}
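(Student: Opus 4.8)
The plan is to establish the two inclusions behind the asserted identity; the substance lies in $\mathbf Z_C\subseteq\mathbb E_C$, so fix $(g,u,\phi)\in\mathbf Z_C$. First I would put the leading (tensor) equation of system (III) with $(T,G)=(T_2,G_2)$ into its Einstein--tensor form: a short rearrangement using $T_2=-T_g$ shows it is equivalent to equation (2.3) augmented by the gauge term,
\[
Ein_g-\Theta+\delta^{\ast}(\delta_{\tilde g}g)=0,\qquad
\Theta:=2\,du\otimes du+2e^{-4u}\,d\phi\otimes d\phi-\big(|du|^2+e^{-4u}|d\phi|^2\big)g,
\]
where $\delta^{\ast}$ is the formal adjoint of the divergence $\delta_g$. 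This comes together with the two scalar equations $\Delta_g u=2e^{-4u}|d\phi|^2$ and $\Delta_g\phi=-4\langle du,d\phi\rangle$ of (III), and, by the definition of $\mathbf Z_C$, with $\delta_{\tilde g}g=0$ on $\partial S$. (The gauge--fixed system being elliptic in the interior, $(g,u,\phi)$ may be taken smooth there, so the manipulations below are classical.)

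Next I would apply $\delta_g$ to the tensor equation. The contracted second Bianchi identity gives $\delta_g(Ein_g)=0$, while a direct computation using the two scalar equations gives $\delta_g\Theta=0$; the latter is the familiar statement that the stress--energy tensor of the (wave--map type) pair $(u,\phi)$ is divergence free on shell, the coupling $e^{-4u}$ being arranged so that the derivative terms cancel against $\Delta_g u$ and $\Delta_g\phi$. Writing $V:=\delta_{\tilde g}g$, we obtain
\[
\delta_g\delta^{\ast}V=0\ \text{ on }S,\qquad V=0\ \text{ on }\partial S.
\]
Moreover, since $g,\tilde g\in Met^{m,\alpha}_{\delta}(S)$ and $\delta_{\tilde g}\tilde g=0$, one has $V=\delta_{\tilde g}(g-\tilde g)\in(\wedge_1)^{m-1,\alpha}_{\delta+1}(S)$, so $V$ and $\nabla V$ decay at infinity.

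Now I would run an energy estimate. Integrating $\langle\delta_g\delta^{\ast}V,V\rangle=0$ over the region $\{r\le R\}\subset S$ and integrating by parts to move one $\delta_g$ onto $V$ yields
\[
\int_{\{r\le R\}}|\delta^{\ast}V|^2\,dv_g=\int_{\partial S}(\delta^{\ast}V)(\mathbf n_g,V)\,da_g+\int_{\{r=R\}}(\delta^{\ast}V)(\mathbf n_g,V)\,da_g .
\]
The $\partial S$--integral vanishes because $V|_{\partial S}=0$, and the flux over $\{r=R\}$ is $O(R^{-2\delta-1})\to0$, since $|\delta^{\ast}V|=O(r^{-\delta-2})$, $|V|=O(r^{-\delta-1})$ and $\delta>\tfrac12$. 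Letting $R\to\infty$ forces $\delta^{\ast}V\equiv0$, i.e.\ the vector field dual to $V$ is a Killing field of $g$. But a Killing field on the asymptotically flat manifold $(S,g)$ whose value and first covariant derivative both tend to zero at infinity vanishes identically: Kostant's identity $\nabla_a\nabla_bV_c=R_{cbad}V^d$ makes $(V,\nabla V)$ the solution of a linear first--order ODE along any ray to infinity, which is therefore trivial. Hence $\delta_{\tilde g}g\equiv0$ on $S$; the tensor equation of (III) then reduces to the first equation of (II), and since the scalar equations of (II) and (III) coincide, $(g,u,\phi)\in\mathbb E_C$. The converse is immediate: if $(g,u,\phi)\in\mathbb E_C$ and $\delta_{\tilde g}g=0$, then $G_2=0$ on all of $S$ (hence the boundary condition (2.4) holds) and $\delta^{\ast}G_2=0$, so (III) with $(T_2,G_2)$ reduces to (II), which holds by hypothesis.

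The step I expect to be the main obstacle is the passage from ``$V$ Killing and decaying'' to ``$V\equiv0$'': one must confirm that the weighted--space hypotheses genuinely supply $\nabla V\to0$ and that the asymptotically flat geometry is regular enough to push the ODE/unique--continuation argument out to infinity --- the boundary datum $V|_{\partial S}=0$ by itself does not visibly force $V\equiv0$, although one may instead invoke that the zero set of a nontrivial Killing field is a union of totally geodesic submanifolds of even codimension and hence cannot contain the hypersurface $\partial S$. A subsidiary point to settle is which metric the adjoint $\delta^{\ast}$ in (III) is taken with respect to; if it is $\delta^{\ast}_{\tilde g}$ rather than $\delta^{\ast}_g$, the energy identity still applies provided the reference metric $\tilde g$ is taken close enough to $g$, which it is. The on--shell identity $\delta_g\Theta=0$ is routine but is precisely where the coefficients of system (I) and the coupling $e^{-4u}$ are used.
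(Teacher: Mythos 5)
Your proposal is correct and follows essentially the same route as the paper: take the divergence of the gauged Einstein-tensor equation, use the Bianchi identity and the two scalar equations to kill the matter divergence, obtain $\delta\delta^{\ast}G_2=0$ with $G_2=0$ on $\partial S$, and conclude $\delta^{\ast}G_2=0$ by the same integration-by-parts/decay argument at infinity. The only difference is at the final step, where the paper simply asserts that a Killing field vanishing on $\partial S$ is identically zero, while you elaborate this via the decay/ODE argument and, more decisively, the even-codimension property of Killing zero sets, which closes the step the paper leaves implicit.
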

\begin{proof}
It suffices to prove the gauge term $G_2=0$ globally for $(g,u,\phi)\in \mathbf Z_C$. From the equation $(2.3)$ we know that, if $(T,G)=(T_2,G_2)$ then leading term of the first equation in (III) is the Einstein tensor $Ein_g$, where we have the Bianchi identity, $\delta_gEin_g=0$. Thus, taking the divergence (with respect to $g$) on it, we obtain
$$\delta_g\{Ein_g-2du\otimes du-2e^{-4u}d\phi\otimes d\phi+(|du|^2+e^{-4u}|d\phi|^2)g+\delta^{\ast}G_2 \}=0,$$ 
which gives,
$$\delta_g\{-2du\otimes du-2e^{-4u}d\phi\otimes d\phi+(|du|^2+e^{-4u}|d\phi|^2)g\}+\delta\delta^{\ast}G_2=0.$$
Basic computation gives
\begin{equation*}
\begin{split}
&\delta\{-2du\otimes du-2e^{-4u}d\phi\otimes d\phi+(|du|^2+e^{-4u}|d\phi|^2)g\}\\
&=-2(\Delta_g u)du-8e^{-4u}\langle du,d\phi\rangle d\phi-2e^{-4u}(\Delta_g\phi)d\phi+4e^{-4u}du|d\phi|^2.
\end{split}
\end{equation*}
Combining with the second and third equations in system (III), it is easy to derive that the expression above is equal to zero, and consequently 
$$\delta\delta^{\ast}G_2=0.$$
Thus we obtain the following system for $G_2$,
$$
\begin{cases}
\delta\delta^{\ast}G_2=0~~~~~\text{on}~~S,\\
G_2=0~~~~~~~~\text{on}~~\partial S.
\end{cases}$$
Integration by parts gives: 
$$0=\int_S \langle \delta\delta^{\ast}G_2,G_2\rangle=\int_S|\delta^{\ast}G_2|^2-\int_{\partial S} \delta^{\ast}G_2(\mathbf n,G_2)-\int_{\partial S_{\infty}} \delta^{\ast}G_2(\mathbf n,G_2).$$
Here the boundary integral on $\partial S$ must vanish because $G_2=0$ on the boundary. The boundary term at infinity $\int_{\infty}=\lim_{r\to\infty}\int_{S_r}$, with $S_r$ denoting the sphere of radius $r$ on $S$. It is also zero because the term $\delta^*G_2(\mathbf n,G_2)$ decays at the rate $r^{-2\delta-2}<r^{-3}$. Then it follows from the equation above that $\delta^{\ast}G_2=0$ in $S$, i.e. $G_2$ is a Killing vector field. In addition, $G_2$ is vanishing on $\partial S$. Thus, we have $G_2=0$. This completes the proof.
\end{proof}
$Remark.$ The lemma above shows that adding the divergence gauge to the system (II) preserves the stationary vacuum property of the solutions. In contrast, it is unknown in general whether adding the Bianchi gauge to system (I) will work in the same way. In fact, when $(T,G)=(T_1,G_1)$, the leading term in the first equation of (III) is the Ricci tensor $Ric_g$. Thus instead of taking divergence as in the proof above, one needs to apply the Bianchi operator to that equation, which yields $\beta\delta^{*}G_1=0$. The operator $\beta\delta^*$ is not positive in general, so the argument above does not apply to the Bianchi gauge. 

\medskip
Next, we will show  $\mathbb{E}_C\subset\mathbf{Z}_C$ in the sense of moduli space.

First define a Banach space $\mathcal X^{m,\alpha}_{\delta}(S)$ of asymptotically flat vector fields vanishing on $\partial S$:
$$\mathcal X^{m,\alpha}_{\delta}(S)=\{X\in T^{m,\alpha}_{\delta}(S):~X=0~\text{on }\partial S\}.$$
 Then the following lemma holds for the space $\mathcal X^{m,\alpha}_{\delta}(S)$. 
\begin{lemma}
 The map $\delta\delta^{\ast}: \mathcal X^{m,\alpha}_{\delta}(S)\rightarrow T^{m-2,\alpha}_{\delta+2}(S)$ is an isomorphism.
\end{lemma}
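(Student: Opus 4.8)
The plan is to treat $\delta\delta^{*}$ as an elliptic boundary value problem on the asymptotically flat manifold $S$, invoke the Fredholm theory on weighted H\"older spaces, and then establish triviality of the kernel and vanishing of the index separately. First I would record the elementary structural facts. Writing $\delta^{*}X=\tfrac12 L_{X}g$ and $\delta=-\operatorname{tr}\nabla$, the operator $P:=\delta\delta^{*}$ is second order with principal symbol $\sigma_{\xi}(P)X=\tfrac12\big(|\xi|^{2}X+\langle\xi,X\rangle\,\xi\big)$, whose eigenvalues $|\xi|^{2}$ and $\tfrac12|\xi|^{2}$ are positive for $\xi\neq0$; hence $P$ is elliptic, and the Dirichlet condition $X|_{\partial S}=0$ satisfies the Lopatinski--Shapiro (complementing) condition for $P$ — the standard fact that Dirichlet data is an elliptic boundary condition for a positive elliptic system. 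Since $g$ is asymptotically flat of order $\delta$, $P$ differs from the flat model operator $P_{F}$ (namely $X\mapsto-\tfrac12(\Delta X+\nabla\operatorname{div}X)$ in Cartesian coordinates on $\mathbb R^{3}\setminus B^{3}$) by terms decaying at infinity, and $P_{F}$ has indicial roots at the end equal to the integers. As $\delta\in(\tfrac12,1)$ is not an integer, the weighted elliptic theory of [B2], [LP] (cf. also Lockhart--McOwen) shows that $P\colon\mathcal X^{m,\alpha}_{\delta}(S)\to T^{m-2,\alpha}_{\delta+2}(S)$ is Fredholm. Moreover, along the path $g_{t}=(1-t)g+tg_{F}$ — each $g_{t}$ is a metric, being a convex combination, and is asymptotically flat of order $\delta$ — the assignment $t\mapsto P_{g_{t}}$ is a norm-continuous path of Fredholm operators with the same elliptic boundary condition, so $\operatorname{ind}P_{g}=\operatorname{ind}P_{g_{F}}$.

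Next I would prove injectivity. If $PX=0$ with $X\in\mathcal X^{m,\alpha}_{\delta}(S)$, then integrating by parts exactly as in the computation for $G_{2}$ above,
\[
0=\int_{S}\langle\delta\delta^{*}X,X\rangle=\int_{S}|\delta^{*}X|^{2}-\int_{\partial S}\delta^{*}X(\mathbf n,X)-\int_{\partial S_{\infty}}\delta^{*}X(\mathbf n,X).
\]
The integral over $\partial S$ vanishes because $X=0$ there, and the one at infinity vanishes since its integrand decays like $r^{-2\delta-1}$ while the area element grows like $r^{2}$ and $2\delta>1$. Hence $\delta^{*}X=0$, i.e.\ $X$ is a Killing field. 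Because $X\equiv0$ on the hypersurface $\partial S$, all derivatives of $X$ tangent to $\partial S$ vanish there, and the Killing equation $\nabla_{i}X_{j}+\nabla_{j}X_{i}=0$ then forces the remaining components $\nabla_{\mathbf n}X$ to vanish as well, so $\nabla X=0$ on $\partial S$. A Killing field satisfies $\nabla_{a}\nabla_{b}X_{c}=-R_{bca}{}^{d}X_{d}$, so $(X,\nabla X)$ obeys a linear first order ODE along geodesics; vanishing of $(X,\nabla X)$ at a point of $\partial S$ therefore gives $X\equiv0$. Thus $\ker P=0$.

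Finally, to obtain surjectivity it suffices, given the Fredholm property, to show $\operatorname{ind}P=0$, and by the deformation invariance this reduces to the flat model $P_{F}$. I would further deform $P_{F}$ to the uncoupled operator $-\tfrac12\Delta\otimes I_{3}$ through $X\mapsto-\tfrac12\Delta X-\tfrac{s}{2}\nabla\operatorname{div}X$, $s\in[0,1]$, which is elliptic with Dirichlet data complementing throughout (symbol eigenvalues $\tfrac12(1+s)|\xi|^{2}$ and $\tfrac12|\xi|^{2}$). Hence $\operatorname{ind}P_{F}=3\,\operatorname{ind}(\Delta_{\mathrm{Dir}})$, and the scalar Laplacian with Dirichlet condition on $\mathbb R^{3}\setminus B^{3}$ at weight $\delta\in(0,1)$ is an isomorphism — by [B2], [LP], or directly from the maximum principle (a decaying harmonic function vanishing on the sphere is zero) together with explicit solvability of the exterior Dirichlet problem with prescribed decay. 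Therefore $\operatorname{ind}P_{F}=0$, so $\operatorname{ind}P=0$; combined with $\ker P=0$, this proves $\delta\delta^{*}\colon\mathcal X^{m,\alpha}_{\delta}(S)\to T^{m-2,\alpha}_{\delta+2}(S)$ is an isomorphism. This is the vector-field analogue of the corresponding step in the static case [AK].

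I expect the main obstacle to be the analytic input at infinity rather than the algebra: namely, verifying that Dirichlet data is complementing for $P$, and — above all — the index computation, where one must be careful that in the weighted setting with $\delta<1$ solutions are only required to decay, not to lie in $L^{2}$ of the whole exterior, so the naive integration-by-parts argument for the adjoint weight $1-\delta$ fails and the index must instead be pinned down by the deformation-to-flat reduction. By contrast the injectivity half is the elementary integration by parts above plus the ODE rigidity of Killing fields.
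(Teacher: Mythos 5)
Your proof is correct and follows essentially the same route as the paper: triviality of the kernel via the integration-by-parts and Killing-field rigidity argument already used for $G_2$ in the preceding lemma, combined with the fact that $\delta\delta^{\ast}$ with Dirichlet conditions is elliptic and Fredholm of index $0$ on the weighted spaces. The only difference is that the paper simply asserts the index-zero property, whereas you justify it (deformation to the flat model and decoupling into three scalar Dirichlet Laplacians at a non-exceptional weight), which is a correct elaboration of the step the paper leaves implicit.
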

\begin{proof}
From the proof of the previous lemma, one sees that kernel of  $\delta\delta^{\ast}: \mathcal X^{m,\alpha}_{\delta}(S)\rightarrow T^{m-2,\alpha}_{\delta+2}(S)$ is zero. On the other hand, $\delta\delta^{\ast}$ is an elliptic operator with Fredholm index 0, thus it is an isomorphism.
\end{proof}
Next, let $\mathcal D_0^{m+1,\alpha}(S)$ be the group of $C^{m+1,\alpha}_{\delta}$ diffeomorphisms of $S$ which equal to the identity map on $\partial S$. These are diffeomorphisms decaying asymptotically to the identity at the rate $r^{-\delta}$. The group $\mathcal D_0^{m+1,\alpha}(S)$ acts freely and continuously on $Met_{\delta}^{m,\alpha}(S)$ by pull back and one has the following local result.
\begin{theorem}
Given any $g\in Met^{m,\alpha}_{\delta}(S)$ near $\tilde g$, there is a unique diffeomorphism $\Psi\in\mathcal D_0^{m+1,\alpha}(S)$ near the identity map $Id_S$ on $S$ such that the pull back metric $\Psi^{\ast}g$ satisfies the divergence gauge condition $\delta_{\tilde g}(\Psi^{\ast}g)=0$.
\end{theorem}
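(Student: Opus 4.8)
The plan is to set this up as an application of the implicit function theorem (or inverse function theorem) in the Banach spaces at hand. Define, for $g$ near $\tilde g$ in $Met^{m,\alpha}_\delta(S)$ and $X\in\mathcal X^{m+1,\alpha}_\delta(S)$, the map
\[
F(g,X)=\delta_{\tilde g}\bigl((\varphi_X)^{\ast}g\bigr),
\]
where $\varphi_X$ is the time-one flow of $X$ (or, equivalently, work with $\Psi=\exp_{Id}(X)$ in a suitable chart of $\mathcal D_0^{m+1,\alpha}(S)$); note $\varphi_X\in\mathcal D_0^{m+1,\alpha}(S)$ precisely because $X$ vanishes on $\partial S$, and it is near $Id_S$ when $X$ is small. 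Then $F$ maps into $T^{m-1,\alpha}_{\delta+1}(S)$, and $F(\tilde g,0)=\beta_{\tilde g}(\tilde g)$'s divergence part, which one should check is zero (or simply replace $\tilde g$-reference terms so that $F(\tilde g,0)=0$); more robustly, one fixes $g$ and seeks $X$ with $F(g,X)=0$, starting the Newton iteration at $X=0$ where $F(g,0)=\delta_{\tilde g}g$ is small since $g$ is near $\tilde g$.

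The key step is to identify the linearization of $F$ in the $X$ variable at $X=0$. A standard computation gives $D_X F(g,0)(X)=\delta_{\tilde g}(L_X g)$, whose leading symbol is that of $\delta_{\tilde g}\delta_g^{\ast}$; at $g=\tilde g$ this is exactly $2\,\delta\delta^{\ast}$ up to lower-order terms and a harmless constant. By Lemma (the one on $\delta\delta^\ast$), the operator $\delta\delta^{\ast}\colon \mathcal X^{m+1,\alpha}_\delta(S)\to T^{m-1,\alpha}_{\delta+1}(S)$ is an isomorphism, and since the difference $D_XF(g,0)-2\delta\delta^\ast$ involves lower-order terms with coefficients controlled by $g-\tilde g$, for $g$ sufficiently close to $\tilde g$ the operator $D_XF(g,0)$ remains an isomorphism between the same spaces (invertibility is open in the operator norm, and the lower-order perturbation is small in the relevant weighted norms). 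Here one must be careful that the indices match: the boundary condition $X|_{\partial S}=0$ defining $\mathcal X^{m+1,\alpha}_\delta$ is exactly the condition that made $\delta\delta^\ast$ an isomorphism in the earlier lemma, so the Dirichlet problem is the right one and no further boundary condition on $X$ is needed.

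With the isomorphism of the linearization in hand, the inverse function theorem (applied to $X\mapsto F(g,X)$ for fixed $g$, or jointly to $(g,X)$) produces, for each $g$ close enough to $\tilde g$, a unique small $X=X(g)$ solving $F(g,X)=0$, depending smoothly on $g$; setting $\Psi=\varphi_{X(g)}\in\mathcal D_0^{m+1,\alpha}(S)$ gives the asserted diffeomorphism with $\delta_{\tilde g}(\Psi^\ast g)=0$. Uniqueness near $Id_S$ is exactly the local uniqueness clause of the inverse function theorem. The main obstacle I anticipate is bookkeeping rather than conceptual: (i) verifying that the exponential/flow map is a smooth local chart for $\mathcal D_0^{m+1,\alpha}(S)$ near $Id_S$ with the correct weighted-H\"older mapping properties, and that $X\mapsto (\varphi_X)^\ast g$ is $C^1$ (indeed $C^\infty$) into $Met^{m,\alpha}_\delta$; and (ii) pinning down the decay rates so that $\delta_{\tilde g}g\in T^{m-1,\alpha}_{\delta+1}(S)$ and so that the lower-order terms in $D_XF(g,0)-2\delta\delta^\ast$ genuinely have small operator norm $\mathcal X^{m+1,\alpha}_\delta\to T^{m-1,\alpha}_{\delta+1}$ — this uses $1/2<\delta<1$ and the asymptotic flatness of $\tilde g$ and $g$. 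Once those technical points are secured, the argument is the usual gauge-fixing/slice construction.
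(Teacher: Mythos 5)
Your proposal is correct and follows essentially the same route as the paper: parametrize $\mathcal D_0^{m+1,\alpha}(S)$ near $Id_S$ by vector fields in $\mathcal X^{m+1,\alpha}_{\delta}(S)$, observe that the linearization of $(X,g)\mapsto\delta_{\tilde g}(\Psi_X^{\ast}g)$ in the $X$-direction is $\delta\delta^{\ast}$ (up to a harmless constant), invoke the isomorphism lemma for $\delta\delta^{\ast}$ on vector fields vanishing at $\partial S$, and conclude by the implicit/inverse function theorem. The only cosmetic difference is that the paper applies the implicit function theorem once at the base point $(Id_S,\tilde g)$, so your extra step of perturbing the linearization to nearby $g$ is not needed.
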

\begin{proof}: Define a map $\mathcal F$ as follows, $$\mathcal F: [\mathcal D_0^{m+1,\alpha}\times   Met^{m,\alpha}_{\delta}](S) \rightarrow (\Lambda_1)^{m-1,\alpha}_{\delta+1}(S), $$
$$\mathcal F(\Psi,g)=\delta_{\tilde g}(\Psi^{\ast}(g)).$$
Linearization of $\mathcal F$ at $(Id_S, \tilde g)$ with respect to a deformation $(X,h)\in T[\mathcal D_0^{m+1,\alpha}\times   Met^{m,\alpha}_{\delta}](S)$ is given by
$$D_0\mathcal F(X,h)=\delta\delta^{\ast}X+\delta(h).$$ 
Here $X\in\mathcal X^{m+1,\alpha}_\delta(S)$, and hence the first part $\delta\delta^{\ast}$ in the linearization above is an isomorphism by the previous lemma. According to the inverse function theorem, for any $g$ in a neigbourhood of $\tilde g$, there exists a unique $\Psi$ near $Id_S$ such that $F(\Psi,g)=0$, which proves the theorem.
\end{proof}
Now define the moduli space $\mathcal{E}_C=\mathcal{E}_C^{m,\alpha}$ to be the quotient of the space $\mathbb E_C$ by the diffeomorphism group:
$$\mathcal{E}_C=\mathbb{E}_C/\mathcal D_0^{m+1,\alpha}(S).$$ 
By Lemma 2.4, any element of $\mathbf{Z}_C$ belongs to one of the equivalence classes in $\mathcal E_C$. Conversely, given any stationary vacuum data $(g,u,\phi)$ near $\tilde g$, according to the theorem above, one can choose a unique diffeomorphism $\Psi\in\mathcal D_0^{m+1,\alpha}(S)$ near $Id_S$ so that $\delta_{\tilde g}(\Psi^*g)=0$, i.e. the pull back data $(\Psi^*g,\Psi^*u,\Psi^*\phi)$ belongs to $\mathbf{Z}_C$. Therefore, locally elements in the set $\mathbf{Z}_C$ near $\tilde g$ are in 1-1 correspondence to equivalence classes in the moduli space $\mathcal{E}_C$ near $[\tilde g]$. In other words, $\mathbf Z_C$ can be taken as a local coordinate chart for $\mathcal E_C$.

\subsection{Boundary conditions}$~~$

As in the introduction, we pose a geometrically natural collection of boundary conditions on $\partial S$:
\begin{equation}
\begin{cases}
g_S^T=\gamma,\\
H_{g_S}=\lambda,\\
\mathbf n_{g_S}(\phi)=f,
\end{cases}
\end{equation}
where $\gamma\in Met^{m,\alpha}(\partial S)$ is a fixed metric of the surface $\partial S$; and $\lambda, f\in C^{m-1,\alpha}(\partial S)$ are prescribed functions on $\partial S$. 
Under the conformal transformation $(2.2)$, these tensor fields become
\begin{equation*}
g_S^T=e^{-2u}g^T,~H_{g_S}=e^{u}(H_g-2\mathbf n_g(u)),~\mathbf n_{g_S}=e^{u}\mathbf n_g.
\end{equation*}
Thus one can translate the boundary conditions (2.5) to the following boundary conditions for the data $(g,u,\phi)$,
\begin{equation}
\begin{split}
\begin{cases}
e^{-2u}g^T=\gamma\\
H_g-2\mathbf n_g(u)=e^{-u}\lambda\\
\mathbf n_g(\phi)=e^{-u}f
\end{cases}
\text{on }\partial S.
\end{split}
\end{equation}
Pairing these boundary conditions with the gauged field equations (III), we obtain a boundary value problem,
\begin{equation}
\begin{split}
&\begin{cases}
Ric_g-2du\otimes du-2e^{-4u}d\phi\otimes d\phi+T+\delta^{\ast}G=0\\
\Delta_g u -2e^{-4u}|d\phi|^2=0\\
\Delta_g \phi+4<du,d\phi>=0
\end{cases}\quad\text{on $S$,}\\
&\begin{cases}
G=0\\
e^{-2u}g^T=\gamma\\
H-2\mathbf n(u)=e^{-u}\lambda\\
\mathbf n(\phi)=e^{-u}f
\end{cases}\quad\text{on $\partial S$.}
\end{split}
\end{equation}
From now on, we will omit the subscript $g$ of $H_g,\mathbf n_g$ when there is no room for confusion. 
The main step to prove Theorem 1.1 is verifying that the boundary value problem above is elliptic.
To do this, we define a differential operator $\mathcal{P}=(\mathcal{L},\mathcal{B})$ based on it, where $\mathcal{L}$ denotes the interior operator and $\mathcal{B}$ the boundary operator. The interior operator $\mathcal L$, mapping the data $(g,u,\phi)$ to the interior equations of $(2.7)$, is defined as follows,
\begin{equation*}
\begin{split}
\mathcal{L}:[Met_{\delta}^{m,\alpha}\times &C^{m,\alpha}_{\delta}\times C^{m,\alpha}_{\delta} ](S)
\rightarrow [(S_2)^{m-2,\alpha}_{\delta+2}\times C^{m-2,\alpha}_{\delta+2}\times C^{m-2,\alpha}_{\delta+2}](S),\\
\mathcal{L}(g,u,\phi)=\{~&2(Ric_g-2du\otimes du-2e^{-4u}d\phi\otimes d\phi+\delta^{\ast}G+T),\\
& 8(\Delta_g u -2e^{-4u}|d\phi|^2),\\
& 8e^{-4u}(\Delta_g \phi +4\langle du,d\phi\rangle)~\}.
\end{split}
\end{equation*}
We refer to Definition 2.1 for the notations of different spaces of tensor fields involved above. Notice that we add extra scalar factors $2$, $8$ and the function $8e^{-4u}$ in the front of the equations. They do not affect ellipticity of the boundary value problem but they are necessary in proving self-adjointness in the following sections.

The boundary operator $\mathcal B$, mapping the data $(g,u,\phi)$ to the boundary equations in $(2.7)$, is given by,
\begin{equation*}
\begin{split}
\mathcal{B}:~[Met_{\delta}^{m,\alpha}\times C^{m,\alpha}_{\delta}\times &C^{m,\alpha}_{\delta}](S)\rightarrow \mathbf B^{m,\alpha}(S)\\
\mathcal{B}(g,u,\phi)&=
\{~G,\\
&\quad\quad e^{-2u}g^T-\gamma,\\
&\quad\quad H-2\mathbf n(u)-e^{-u}\lambda,\\
&\quad\quad\mathbf n(\phi)-e^{-u}f~\},
\end{split}
\end{equation*}
where we will use $\mathbf B^{m,\alpha}(S)$ to denote the target space of $\mathcal B$. Examining the boundary terms, one can see that $\mathbf B^{m,\alpha}(S)= [(T_1^{m-1,\alpha}\times C^{m-1,\alpha})\times S_2^{m,\alpha}\times C^{m-1,\alpha}\times C^{m-1,\alpha}](\partial S)$. Here the gauge $G$ is understood to be in the space $[T_1^{m-1,\alpha}\times C^{m-1,\alpha}](\partial S)$, because it is a $1$-form on $S$ which, on the boundary $\partial S$, consists of a $1$-form $G^T$ tangential to $\partial S$ and a normal component $G(\mathbf n)\cdot \mathbf n$ that yields a $C^{m-1,\alpha}$ scalar fields $G(\mathbf n)$ on $S$.

Throughout this paper, $\mathcal P$ will be written as $\mathcal P_1=(\mathcal L_1,\mathcal B_1)$ if the gauge terms in $\mathcal L,\mathcal B$ correspond to the Bianchi gauge, and $\mathcal P_2=(\mathcal L_2,\mathcal B_2)$ if the divergence gauge is applied.

Let $(g,u,\phi)$ be a fixed element in the zero set $\mathcal{P}^{-1}(0)$, and choose the reference metric $\tilde{g}=g$ in the gauge term $G$. The linearization of $\mathcal{P}$ at $(g,u,\phi)$ is given by
$$D\mathcal{P}(h,v,\sigma)=(D\mathcal{L}(h,v,\sigma),~D\mathcal{B}(h,v,\sigma)),$$
where $(h,v,\sigma)$ is an infinitesimal deformation of the data $(g,u,\phi)$. The operators $D\mathcal{L}$, $D\mathcal{B}$ are the linearizations of $\mathcal{L}$ and $\mathcal{B}$, given by,
\begin{equation*}
\begin{split}
D\mathcal{L}:~[(S_2)_{\delta}^{m,\alpha}\times C^{m,\alpha}_{\delta}\times C^{m,\alpha}_{\delta}](S)&
\rightarrow [(S_2)^{m-2,\alpha}_{\delta+2}\times C^{m-2,\alpha}_{\delta+2}\times C^{m-2,\alpha}_{\delta+2}](S),\\
D\mathcal{L}(h,v,\sigma)=\{~&D^{\ast}Dh-Z(h)+O_1,\\
&~~8\Delta_g v+O_1,\\
&~~~~8e^{-4u}(\Delta_g \sigma +4\langle du,d\sigma\rangle) +O_0~\},
\end{split}
\end{equation*}
and
\begin{equation*}
\begin{split}
D\mathcal{B}:~[(S_2)_{\delta}^{m,\alpha}\times C^{m,\alpha}_{\delta}\times &C^{m,\alpha}_{\delta}](S)\rightarrow\mathbf B^{m,\alpha}(S),\\
D\mathcal{B}(h,v,\sigma)&=\{~G'_h,\\
&\quad\quad e^{-2u}(-2vg+h)|_{\partial S},\\
&\quad\quad H'_h-2\mathbf n(v)+O_0,\\
&\quad\quad \mathbf n(\sigma)+O_0~\}.
 \end{split}
 \end{equation*}
In the expression above, $D^*D$ is the Laplace operator defined as $D^*Dh=-\nabla^i\nabla_ih$. The terms $Z$ and $G'_h$ depend on the choice of gauge terms. When the Bianchi gauge is choosen, they are of the form
\begin{equation}
\begin{cases}
Z_1(h)=0,\\
(G_1)'_h=\beta_{g}h.
\end{cases}
\end{equation}
If the divergence gauge is used,
\begin{equation}
\begin{cases}
Z_2(h)=D^2(trh)+\Delta_g(trh)g+(\delta\delta h) g,\\
(G_2)'_h=\delta_{g}h.
\end{cases}
\end{equation}

The expressions $O_1$ and $O_0$ stand for lower order terms which involve the derivative of $(h,v,\sigma)$ with order not higher than $1$ and $0$. In the linearization $D\mathcal B$, the term $H'_h$ denotes the variation of the mean curvature with respect to the deformation $h$. We refer to the appendix \S5 for the detailed calculation.

Since ellipticity only depends on the principal part of the operator, we can remove the lower order terms $O_1$ and $O_0$ in  $D\mathcal{P}$ and study the simplified operator $P(h,v,\sigma)=(L(h,v,\sigma), B(h,v,\sigma))$, where $L$ and $B$ are as follows:
\begin{equation}
\begin{split}
L:~[(S_2)_{\delta}^{m,\alpha}\times C^{m,\alpha}_{\delta}\times C^{m,\alpha}_{\delta}](S)
\rightarrow &[(S_2)^{m-2,\alpha}_{\delta+2}\times C^{m-2,\alpha}_{\delta+2}\times C^{m-2,\alpha}_{\delta+2}](S),\\
L(h,v,\sigma)=\{&~D^{\ast}Dh-Z(h),\\
&8\Delta_g v,\\
&8e^{-4u}(\Delta_g \sigma+4\langle du,d\sigma\rangle)~\},
\end{split}
\end{equation}
and
\begin{equation}
\begin{split}
B:~[(S_2)_{\delta}^{m,\alpha}\times C^{m,\alpha}_{\delta}\times &C^{m,\alpha}_{\delta}](S)
\rightarrow\mathbf B^{m,\alpha}(S),\\
B(h,v,\sigma)=\{~&G'_h,\\
&e^{-2u}(-2vg+h)|_{\partial S},\\
&H'_h-2\mathbf n(v),\\
&\mathbf n(\sigma)~\}.
\end{split}
\end{equation}
In the last component of $L$, we keep the lower order term $4\langle du,d\sigma\rangle$ for the purpose of self-adjointness which will be discussed later; again this does not affect the ellipticity. 

In the following section, if the pair $(Z,G'_h)$ takes the values in $(2.8)$, the operator $P$ will be denoted by $P_1=(L_1,B_1)$; and if equipped with the divergence gauge $(2.9)$, $P$ will be written as $P_2=(L_2,B_2)$. We will prove the ellipticity of both operators $P_1$ and $P_2$. As a consequence, the boundary value problem $(2.7)$ is elliptic with respect to both choices of gauges.
\section{ellipticity}
In this section, we will prove the ellipticity  for the operators $P_1,P_2$ defined at the end of last section, implementing the criterion developed by Agmon-Douglis-Nirenberg (cf.[ADN]). We use the following standard notation. Let $\xi$ denote a $1-$form on $S$; $\eta$ denote a nonzero $1-$form tangential to the boundary $\partial S$, i.e. $\eta(\mathbf n)=0$; and $\mu$ a unit $1-$form normal to the boundary $\partial S$, i.e. $\mu^T=0$. The index $0$ denotes the normal direction to $\partial S$, and index $1,2$ denote the tangential direction on $\partial S$. 

Let $P=(L,B)$ be a differential operator consisting of an interior operator $L$ and a boundary operator $B$. Denote the matrix of principal symbols of the interior operator at $\xi$ as $L(\xi)$ and the matrix of principal symbols of the boundary operator as $B(\xi)$. By [ADN], the operator $P$ forms an elliptic boundary value problem if and only if the following two conditions hold.
\\(A) (properly elliptic condition): determinant $l(\xi)$ of $L(\xi)$ has no nontrivial real root;
\\(B) (complementing boundary {condition}): Take the adjoint matrix $L^{\ast}(\xi)$ of $L(\xi)$. Let $\xi=(\eta+z\mu)$. {The} rows of $B(\eta+z\mu)\cdot L^{\ast}(\eta+z\mu)$ are linearly independent modulo $l^+(z)$, where $l^+(z)=\prod(z-z_k)$ and $\{z_k\}$ are the roots of $l(\eta+z\mu)=0$ having positive imaginary parts.
 
\subsection{Ellipticity with the Bianchi gauge}
\begin{theorem}
$P_1$ is an elliptic operator.
\end{theorem}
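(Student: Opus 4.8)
The plan is to verify the two Agmon--Douglis--Nirenberg conditions (A) and (B) directly for the operator $P_1=(L_1,B_1)$, using the fact that with the Bianchi gauge the leading symbol of the interior operator decouples into familiar pieces. First I would compute the principal symbol matrix $L_1(\xi)$. Since $Z_1(h)=0$, the $h$-equation has leading part $D^*Dh$, whose symbol is $|\xi|^2\,\mathrm{Id}$ on symmetric $2$-tensors; the $v$-equation has leading symbol $8|\xi|^2$; and the $\phi$-equation has leading symbol $8e^{-4u}|\xi|^2$ (the term $4\langle du,d\sigma\rangle$ being lower order). Hence $L_1(\xi)$ is block diagonal and
\[
l(\xi)=\det L_1(\xi)=c\,|\xi|^{2(6+1+1)}=c\,|\xi|^{16}
\]
up to a nonzero constant $c=c(u)$, where $6=\dim S_2$. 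This is a positive power of $|\xi|^2$, so it has no nontrivial real root: condition (A) holds and the system is properly elliptic. Moreover the roots of $l(\eta+z\mu)=0$ in the upper half plane are all equal to $z=i|\eta|$, so $l^+(z)=(z-i|\eta|)^{8}$.

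Next I would check the complementing condition (B). Because $L_1(\xi)$ is a scalar multiple of the identity on each block, its adjugate $L_1^*(\xi)$ is again block-scalar (a power of $|\xi|^2$ times the identity on each factor), so modulo $l^+(z)$ the rows of $B_1(\eta+z\mu)\cdot L_1^*(\eta+z\mu)$ are, up to nonzero scalars and shared factors $(z-i|\eta|)^{k}$ that we may divide out, just the rows of $B_1(\eta+z\mu)$ themselves. Thus (B) reduces to showing: if a symmetric $2$-tensor polynomial $h(z)$, a scalar polynomial $v(z)$, and a scalar polynomial $\sigma(z)$, each of degree $<8$ in $z$ and obtained by ignoring the adjugate factors (equivalently, taking values in the appropriate stable subspaces), satisfy all four boundary symbol equations
\[
\beta_g h=0,\quad (-2vg+h)^T=0,\quad H'_h(\text{symbol})-2\,\mathbf n(v)(\text{symbol})=0,\quad \mathbf n(\sigma)(\text{symbol})=0
\]
modulo $(z-i|\eta|)^{8}$, then $(h,v,\sigma)\equiv 0$ modulo $(z-i|\eta|)^{8}$. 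In practice one works at a boundary point, writes $\xi=\eta+z\mu$ with $|\eta|=1$, restricts to the one-dimensional space of ``decaying'' solutions for each block (the kernel of the relevant power of $(\partial_0+ \text{something})$ translates into evaluating at $z=i$), and checks that the $6+1+1=8$ boundary conditions ($2$ from $G=(G^T,G(\mathbf n))$, $3$ from $(-2vg+h)^T$, $1$ from the linearized mean curvature, $1$ from $\mathbf n(\sigma)$) uniquely pin down the $8$-dimensional space of boundary data. The $\sigma$-component is immediate: $\mathbf n(\sigma)=0$ on the decaying solution forces $\sigma=0$. The $v$- and $h$-components are coupled only through the Bianchi and Dirichlet-type conditions and the linearized mean curvature; this is precisely the ADN analysis of the Bianchi-gauged linearized Einstein operator with Bartnik-type boundary data, and one solves it by the usual splitting of $h$ into its $g^T$-trace part, its trace-free tangential part, its mixed part $h(\mathbf n)^T$, and its $h(\mathbf n,\mathbf n)$ part, using $\beta_g h=0$ to express the normal derivatives (the $z$-dependence) in terms of the tangential data.

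The main obstacle I expect is the bookkeeping in step (B): correctly identifying the stable (decaying) subspace for the $h$-block and tracking how the Bianchi gauge symbol $\beta_g(\eta+z\mu)h$ couples the components of $h$, so that the linearized mean curvature condition $H'_h-2\mathbf n(v)=0$ together with $(-2vg+h)^T=0$ and $G^T=0$, $G(\mathbf n)=0$ yields an invertible $8\times 8$ linear system. Everything else --- computing $l(\xi)$, establishing (A), reducing away the adjugate factors, and disposing of the $\sigma$ equation --- is routine linear algebra. I would organize the proof by first recording $L_1(\xi)$ and $l(\xi)$, then stating the decaying subspaces, then checking (B) block by block, treating $\sigma$ first, then the coupled $(h,v)$ system, and finally invoking [ADN] to conclude that $P_1$ is elliptic.
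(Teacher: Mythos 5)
Your overall architecture matches the paper's proof: compute the block-diagonal principal symbol $L_1(\xi)$, get $l(\xi)=64e^{-4u}|\xi|^{16}$ so condition (A) is immediate, observe that the only root with positive imaginary part is $z=i|\eta|$ so $l^+(z)=(z-i|\eta|)^8$, and use the fact that $L_1^*(\eta+z\mu)$ is $(z^2+|\eta|^2)^7$ times an invertible constant block matrix to reduce the complementing condition (B) to the non-degeneracy of $B_1(\eta+i|\eta|\mu)$. Up to that point your reduction is correct and is exactly what the paper does.

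The gap is that you stop precisely where the content of the theorem lies. You defer the verification that the resulting linear system at $z=i|\eta|$ has only the trivial solution to ``bookkeeping'' that you expect to be the main obstacle, but this is not routine packaging: whether that system is invertible is exactly what distinguishes the boundary conditions $(G,\,h^T-2vg^T,\,H'_h-2\mathbf n(v),\,\mathbf n(\sigma))$ from a non-elliptic choice, and no part of your argument establishes it. The paper does this step explicitly: writing the eight symbol equations (3.2)--(3.9), it uses the three induced-metric conditions to set $h_{11}=h_{22}=2v$, $h_{12}=0$, the mean-curvature condition to get $i\eta_1h_{10}+i\eta_2h_{20}=0$, the normal component of the gauge to get $h_{00}=4v$, and the tangential gauge components (contracted with $i\eta_1,i\eta_2$) to force $|\eta|^2v=0$, hence $(h,v,\sigma)=0$. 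Moreover your own bookkeeping is already off: the gauge $G$ restricted to $\partial S$ contributes \emph{three} symbol equations (two from $G^T$, one from $G(\mathbf n)$), not two, so your count gives only seven conditions for the eight unknowns $(h_{\alpha\beta},v,\sigma)$ and the claimed invertible $8\times 8$ system is not even correctly assembled. To complete the proof you must write out all eight symbol rows of $B_1(\eta+i|\eta|\mu)$ and carry out the elimination, as in the paper.
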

\begin{proof}
Since the operator $L_1$ is essentially the Laplace operator, it is easy to see that the matrix of principal symbols for $L_1$ at $\xi$ is given by
$$L_1(\xi)=
\begin{bmatrix}
|\xi|^2I_{6\times6}&0&0\\
0&8|\xi|^2&0\\
0&0&8e^{-4u}|\xi|^2
\end{bmatrix}.
$$
Then the adjoint matrix of $L(\xi)$ is 
$$L_1^{\ast}(\xi)=|\xi|^{14}\begin{bmatrix}64e^{-4u}I_{6\times6}&0&0\\0&8e^{-4u}&0\\0&0&8\end{bmatrix}.$$
The determinant of $L_1(\xi)$ is $l(\xi)=64e^{-4u}|\xi|^{16}$. So it is obvious that the interior operator is properly elliptic.
\\The root of $l(\eta+z\mu)$ with positive imaginary part is $z=i|\eta|$, and this implies $l^+(z)=(z-i|\eta|)^8$ in the criterion (B) above.
Let $C$ be a general vector in $\mathbb{C}^8$.  The complementing boundary condition holds if the equation below has no nontrivial solution in $\mathbb{C}^8$:
\begin{equation}
C\cdot B_1(\eta+z\mu)\cdot L_1^{\ast}(\eta+z\mu)=0\quad(\text{mod}~l^+(z) ).
\end{equation}
Since $L_1^*(\eta+z\mu)=(z+|\mu|^2)^{7}\begin{bmatrix}64e^{-4u}I_{6\times6}&0&0\\0&8e^{-4u}&0\\0&0&8\end{bmatrix}$,
one sees easily that equation $(3.1)$ is equivalent to the following,
$$C\cdot B_1(\eta+z\mu)\cdot\begin{bmatrix}64e^{-4u}I_{6\times6}&0&0\\0&8e^{-4u}&0\\0&0&8\end{bmatrix}=0\quad(\text{mod} (z-i|\eta|)).$$
And furthermore, this holds if and only if the following is true,
$$C\cdot B_1(\eta+i|\eta|\mu)\cdot\begin{bmatrix}64e^{-4u}I_{6\times6}&0&0\\0&8e^{-4u}&0\\0&0&8\end{bmatrix}=0.$$
So the equation (3.1) will have no non-zero solution, if the matrix of principal symbol $B_1(\eta+i|\eta|\mu)$ is non-degenerate. Write the nonzero tangential 1-form $\eta=(\eta_1,\eta_2)$. Basic computation (cf.\S5.2) shows that the principal symbols of the boundary operator $B_1$ at $(\eta+i|\eta|\mu)$ are given by,
\begin{align}
\label{eqn1}
|\eta|h_{00}-i\eta_1h_{10}-i\eta_2h_{20}-\frac{1}{2}|\eta|(h_{00}+h_{11}+h_{22})=0\\
\label{eqn2}
|\eta|h_{01}-i\eta_1h_{11}-i\eta_2h_{21}+\frac{1}{2}i\eta_1(h_{00}+h_{11}+h_{22})=0\\
\label{eqn3}
|\eta|h_{02}-i\eta_1h_{12}-i\eta_2h_{22}+\frac{1}{2}i\eta_2(h_{00}+h_{11}+h_{22})=0\\
\label{eqn4}
-2v+h_{11}=0\\
\label{eqn5}
h_{12}=0\\
\label{eqn6}
-2v+h_{22}=0\\
\label{eqn7}
-\frac{1}{2}|\eta|(h_{11}+h_{22})-i\eta_1h_{10}-i\eta_2h_{20}+2|\eta|v=0\\
\label{eqn8}
-|\eta|\sigma=0
\end{align}
To prove $B_1$ is non-degenerate, we show that the solution $(h_{\alpha\beta},v,\sigma)$ to the system above must be zero. According to equations $(3.5)$,$(3.6)$ and $(3.7)$, we can replace $h_{11}$ and $h_{22}$ by $2v$ and $h_{21}$ by $0$. Then equation $(3.8)$ gives 
$$2|\eta|v-(i\eta_1h_{10}+i\eta_2h_{20})-2|\eta|v=0,$$
i.e.$$(i\eta_1h_{10}+i\eta_2h_{20})=0.$$
Equation $(3.2)$ gives:
$$\frac{1}{2}|\eta|h_{00}-(i\eta_1h_{10}+i\eta_2h_{20})-2|\eta|v=0.$$
It follows that $$h_{00}=4v.$$
Multiplying $(3.3)$ by $(i\eta_1)$ and $(3.4)$ by $(i\eta_2)$, and then summing them, we obtain,
$$2|\eta|^2v+4|\eta|^2v=0.$$
Thus $v=0$ and consequently $h_{ij}=0$ for all $0\leq i,j\leq 2$. 
\\Finally, it's obvious from equation (3.9) that $\sigma=0$. This completes the proof.

\end{proof}
$Remark.$ One can see from the proof above that principal symbols of the operator $P_1$ are simple so that ellipticity follows from a direct verification of the conditions $(A)$ and $(B)$. However, in the divergence-gauge case, principal symbols of the operator $P_2$ are too complicated for us to carry out the same computation as above. In the following, we will use an intermediate operator which has Bianchi gauge term $G_1$ in the interior operator and divergence gauge $G_2$ in the boundary operator, to approach the ellipticity of the operator $P_2$.
\subsection{Ellipticity with divergence gauge}$~~$

Define a new operator $\hat P=(L_1,B_2)$, i.e. $\hat P$ consists of the interior operator $L$ as in (2.10) with $Z(h)=0$ and the boundary operator $B$ as in (2.11) with $G'_h=\delta_{g}h$.
Then we have the following lemma.
\begin{lemma} 
The differential operator $\hat P$ is elliptic.
\end{lemma}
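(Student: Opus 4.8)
The plan is to verify the Agmon--Douglis--Nirenberg conditions (A) and (B) for $\hat P$. Condition (A) is immediate and is exactly as in Theorem 3.1: the interior operator $L_1$ has principal symbol matrix $L_1(\xi)=\mathrm{diag}(|\xi|^2 I_{6\times 6},\,8|\xi|^2,\,8e^{-4u}|\xi|^2)$, with determinant $l(\xi)=64e^{-4u}|\xi|^{16}$ having no nontrivial real root; the adjoint matrix $L_1^\ast(\xi)$ is the same scalar-weighted diagonal matrix used there, and $l^+(z)=(z-i|\eta|)^8$. So the only work is condition (B), and since $L_1$ is unchanged, the reduction performed in the proof of Theorem 3.1 applies verbatim: the complementing condition for $\hat P$ holds if and only if the matrix of principal symbols $B_2(\eta+i|\eta|\mu)$ is non-degenerate, i.e. the only solution $(h_{\alpha\beta},v,\sigma)$ of the symbol equations of $B_2$ at $\xi=\eta+i|\eta|\mu$ is zero.

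The key step is therefore to write down the principal symbols of $B_2$ at $\xi=\eta+i|\eta|\mu$ and show the resulting linear system has trivial kernel. The boundary operator $B_2$ differs from $B_1$ only in its first component: instead of the Bianchi gauge symbol $\beta_g h$ we now have the divergence gauge symbol $\delta_g h$, whose principal symbol is $-\xi^j h_{j\,\cdot}$; the remaining three components ($e^{-2u}(-2vg+h)^T$, $H'_h-2\mathbf n(v)$, and $\mathbf n(\sigma)$) are identical to those of $B_1$ and contribute exactly equations (3.5)--(3.9) above. So concretely I would: (i) compute the three scalar equations coming from $\delta_g h(\xi)=0$ at $\xi=\eta+i|\eta|\mu$, namely the normal component and the two tangential components of $\xi^j h_{j\alpha}=0$; (ii) adjoin equations (3.5), (3.6), (3.7) to substitute $h_{11}=h_{22}=2v$, $h_{12}=0$; (iii) adjoin the mean-curvature equation (3.8) and the scalar equation (3.9) giving $\sigma=0$ directly; (iv) solve the resulting system for $v$ and the mixed components $h_{00},h_{01},h_{02},h_{10},h_{20}$ and show everything vanishes. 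I expect the divergence-gauge symbol equations to read (schematically) $|\eta|h_{00}-i\eta_1 h_{10}-i\eta_2 h_{20}=0$, $|\eta|h_{01}-i\eta_1 h_{11}-i\eta_2 h_{21}=0$, $|\eta|h_{02}-i\eta_1 h_{12}-i\eta_2 h_{22}=0$; combined with $h_{11}=h_{22}=2v$, $h_{12}=h_{21}=0$ the second and third force $i\eta_\alpha\, v$-relations, and then (3.8) together with the first pins down $v=0$, hence all components vanish.

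The main obstacle is purely computational rather than conceptual: getting the principal symbol of the divergence gauge $\delta_g h$ correct at the complex covector $\eta+i|\eta|\mu$ (signs, the split into normal index $0$ and tangential indices $1,2$, and the unit-normal normalization $|\mu|=1$), and then checking that the $8\times 8$ (or, after the trivial eliminations from (3.5)--(3.7) and (3.9), effectively $5\times 5$) linear system really is invertible — it is conceivable that the divergence gauge behaves slightly worse than the Bianchi gauge on the symbol level and that one tangential combination is not immediately killed, in which case one must use the mean-curvature equation (3.8) more carefully to close the argument. I would therefore carry out the elimination in the same order as in Theorem 3.1, tracking which equation supplies each vanishing, and flag the step where (3.8) is invoked as the delicate point. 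If the direct computation becomes unwieldy, an alternative is to observe that $\delta_g h$ and $\beta_g h$ differ by the symbol of $\tfrac12 d\,\mathrm{tr}\,h$, i.e. by $\tfrac12 \xi_\alpha(h_{00}+h_{11}+h_{22})$, and to deduce non-degeneracy of $B_2(\eta+i|\eta|\mu)$ from that of $B_1(\eta+i|\eta|\mu)$ by a change of variables in $\mathbb{C}^8$; this would make Lemma 3.3 a short corollary of Theorem 3.1.
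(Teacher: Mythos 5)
Your plan is correct and is essentially the paper's own argument: the paper proves this lemma by a ``slight modification'' of the proof of Theorem 3.1, keeping the interior symbol and reduction to non-degeneracy of the boundary symbol matrix at $\xi=\eta+i|\eta|\mu$, and replacing the Bianchi-gauge symbol by the divergence-gauge symbol (dropping the trace term), which yields exactly the equations you predict ($|\eta|h_{00}-i\eta_1h_{10}-i\eta_2h_{20}=0$, etc.) alongside the unchanged ones. The elimination then closes just as you outline: the mean-curvature equation gives $i\eta_1h_{10}+i\eta_2h_{20}=0$, hence $h_{00}=0$, and contracting the two tangential gauge equations with $i\eta_1,i\eta_2$ gives $2|\eta|^2v=0$, so the kernel is trivial.
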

\begin{proof}
This can be proved by a slight modification of the previous proof. Notice $\hat P$ has the same interior operator as $P_1$, and its boundary operator differs from that of $P_1$ only by the gauge term. So changing $\beta_gh$ to $\delta_gh$ (they only differ by a trace term) in the boundary operator, the new principal symbols of the  boundary operator at $\xi=(i|\eta|,\eta_1,\eta_2)$ become
\begin{align}
\label{eqn8}
|\eta|h_{00}-i\eta_1h_{10}-i\eta_2h_{20}=0\\
\label{eqn8}
|\eta|h_{01}-i\eta_1h_{11}-i\eta_2h_{21}=0\\
\label{eqn8}
|\eta|h_{02}-i\eta_1h_{12}-i\eta_2h_{22}=0\\
\label{eqn8}
-2v+h_{11}=0\\
\label{eqn8}
h_{12}=0\\
\label{eqn8}
-2v+h_{22}=0\\
\label{eqn8}
-\frac{1}{2}|\eta|(h_{11}+h_{22})-i\eta_1h_{10}-i\eta_2h_{20}+2|\eta|v=0\\
\label{eqn8}
-|\eta|\sigma=0
\end{align}
By equations $(3.13)$,$(3.14)$ and $(3.15)$, we can replace $h_{11}$ and $h_{22}$ by $2v$ and $h_{21}$ by $0$. Then $(3.16)$ gives 
$$2|\eta|v-(i\eta_1h_{10}+i\eta_2h_{20})-2|\eta|v=0,$$
i.e.$$(i\eta_1h_{10}+i\eta_2h_{20})=0.$$
Consequently, equation $(3.10)$ yields $h_{00}=0$.
\\Multiplying $(3.11)$ by $(i\eta_1)$ and $(3.12)$ by $(i\eta_2)$, and then summing, we obtain
$$2|\eta|^2v=0.$$
Thus $v=0$ and consequently $h_{ij}=0$ for all $0\leq i,j\leq 2$. 

\end{proof}
Next, we use the method exploit in [AK] to prove ellipticity for the operator $P_2$.
 \begin{theorem}
 The operator $P_2$ is elliptic.
 \end{theorem}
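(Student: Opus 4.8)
The plan is to deduce ellipticity of $P_2=(L_2,B_2)$ from the already-established ellipticity of the two auxiliary operators $P_1=(L_1,B_1)$ and $\hat P=(L_1,B_2)$, by exploiting the fact that the divergence gauge differs from the Bianchi gauge only by a term built out of $\operatorname{tr} h$, together with the observation that $L_2$ arises from $L_1$ by the very same substitution. Concretely, recall that $\delta_g h=\beta_g h-\tfrac12 d(\operatorname{tr}_g h)$, so on the level of operators the passage from the Bianchi-gauged system to the divergence-gauged system amounts to adding to the first interior equation the term $-\delta^*(d\operatorname{tr} h)$ (which after rearrangement is exactly $Z_2(h)=D^2(\operatorname{tr} h)+\Delta_g(\operatorname{tr} h)g+(\delta\delta h)g$ up to the sign conventions in (2.9)) and, in the boundary operator, replacing $\beta_g h$ by $\delta_g h$. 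The key algebraic point is that both modifications are governed by a single linear change of variables on the symbol side.

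The main steps, in order, are as follows. First I would write down the principal symbol matrix $L_2(\xi)$ explicitly and verify condition (A): compute $l_2(\xi)=\det L_2(\xi)$ and check it has no nontrivial real root. The expected outcome is that $l_2(\xi)$ is again a nonzero constant multiple of $|\xi|^{16}$ — this is because the extra terms in $Z_2$ only reshuffle the trace part of $h$, an operation which does not alter the determinant of the full symbol (one can see this by block-triangularizing with respect to the splitting of $S_2$ into its trace-free and pure-trace parts, or simply by noting that $D^*D-Z_2$ is the conformal Killing / Einstein-operator symbol, which is known to be $|\xi|^2$ times an invertible constant matrix). Second, I would handle condition (B): since $l_2^+(z)=(z-i|\eta|)^8$ exactly as before, I need the rows of $B_2(\eta+z\mu)\cdot L_2^*(\eta+z\mu)$ to be independent mod $(z-i|\eta|)^8$. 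Here is where I would invoke the intermediate operator $\hat P$: the operator $P_2$ and $\hat P$ have the \emph{same} boundary operator $B_2$ but different interior operators ($L_2$ versus $L_1$). So the complementing condition for $P_2$ differs from that for $\hat P$ only through the replacement of $L_1^*$ by $L_2^*$ inside the product $B_2\cdot(\cdot)^*$ and through the (identical) factor $l^+(z)$. The strategy, following [AK], is to show that $L_2^*(\eta+z\mu)$ and $L_1^*(\eta+z\mu)$ differ, modulo $(z-i|\eta|)^8$, by multiplication by an invertible matrix — equivalently, that the extra trace terms contribute nothing to the relevant Lopatinski–Shapiro determinant once one works modulo $l^+(z)$. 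If that holds, independence of the rows for $\hat P$ (Lemma 3.3) transfers verbatim to $P_2$.

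The hard part will be precisely this last transfer: controlling how the trace-modification in $Z_2$ interacts with the adjugate $L_2^*$ modulo $(z-i|\eta|)^8$. Unlike $L_1(\xi)=|\xi|^2\cdot(\text{const})$, the symbol $L_2(\xi)$ is not a scalar multiple of a constant matrix on the $S_2$-block, so its adjugate is genuinely $z$-dependent in a nontrivial way, and one must check that the projection onto the quotient ring $\mathbb{C}[z]/(z-i|\eta|)^8$ still leaves the boundary rows independent. I would approach this by decomposing $h$ into its $g$-trace-free part $\mathring h$ and trace $\operatorname{tr} h$, observing that on $\mathring h$ the operators $L_1$ and $L_2$ agree at the symbol level, so the only new phenomenon is a decoupled scalar fourth-order-in-$z$ contribution from $\operatorname{tr} h$; then tracking through the boundary equations of Lemma 3.3 with this decomposition to see that the scalar trace equation is still determined and forces $\operatorname{tr} h=0$, after which the argument collapses to the one already given for $\hat P$. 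I expect the computation to parallel [AK] closely, with the twist field $\phi$ entering only through lower-order terms (already discarded) so that its presence does not affect any of the symbol-level reasoning.
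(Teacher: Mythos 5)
Your step (A) is fine: one can check directly that $\det L_2(\xi)$ is a nonzero multiple of $|\xi|^{16}$, so the interior operator is properly elliptic. The gap is in your transfer of condition (B) from $\hat P=(L_1,B_2)$ to $P_2=(L_2,B_2)$. For $L_1$ the symbol is scalar, so $L_1^{*}(\eta+z\mu)=(z^2+|\eta|^2)^7C$ with $C$ constant invertible; every entry is divisible by $(z-i|\eta|)^7$, and this is exactly why the complementing condition collapses to nondegeneracy of the single matrix $B(\eta+i|\eta|\mu)$. For $L_2$ the $S_2$-block symbol is $A(\xi)h=|\xi|^2h+(tr h)\,\xi\otimes\xi-|\xi|^2(tr h)g+h(\xi,\xi)g$, which is not scalar: solving $A(\xi)h=k$ gives $h(\xi,\xi)=k(\xi,\xi)/(2|\xi|^2)$ and $tr h=\big(\tfrac32 k(\xi,\xi)-|\xi|^2 tr k\big)/|\xi|^4$, so $A(\xi)^{-1}=|\xi|^{-6}P(\xi)$ with $P$ polynomial and $P(\xi_0)k=-\tfrac32 k(\xi_0,\xi_0)\,\xi_0\otimes\xi_0\neq0$ at the null covector $\xi_0=\eta+i|\eta|\mu$. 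Hence the $S_2$-block of the adjugate $L_2^{*}(\eta+z\mu)$ is $(z^2+|\eta|^2)^5$ times a matrix that does not vanish at $z=i|\eta|$: it is divisible by $(z-i|\eta|)^5$ but not by $(z-i|\eta|)^7$. Therefore no relation $L_2^{*}\equiv L_1^{*}M$ modulo $(z-i|\eta|)^8$ with $M$ invertible can hold; the multiplicity-$8$ root now carries genuine Jordan structure (the decaying solutions of the symbol ODE include $t e^{-|\eta|t}$ terms), and the complementing condition for $P_2$ does not reduce to nondegeneracy of $B_2(\eta+i|\eta|\mu)$, which is all that Lemma 3.2 provides. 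Your supporting claim that $L_1$ and $L_2$ agree at symbol level on trace-free tensors is also false: $Z_2$ contains the term $(\delta\delta h)g$, which is nonzero on trace-free $h$, so $\sigma(L_2)$ does not block-triangularize with respect to the trace-free/pure-trace splitting and the hoped-for ``collapse to the $\hat P$ argument'' is not available as described.

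For comparison, the paper avoids the symbol computation for $P_2$ altogether. It characterizes ellipticity by the uniform estimate $(3.18)$ for $P_2$ together with the same estimate for the adjoint. The estimate for $P_2$ is deduced from the ellipticity of $\hat P$ once the extra terms $\delta\delta h$ and $D^2(tr h)$ are controlled: $\delta h$ via the linearized Bianchi-type identity $\delta\delta^{*}(\delta h)=\delta L_2(h)$, with Dirichlet data for $\delta h$ supplied by $B_2$, and $D^2(tr h)$ via the linearized Gauss equation on $\partial S$ together with an estimate from [AK]; the adjoint estimate then comes for free from the formal self-adjointness of $L_2$ on the kernel of $B_2$ (Proposition 3.5), proved variationally from the reduced action. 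If you want a purely symbol-theoretic proof instead, you must carry out the full Lopatinski--Shapiro analysis for the non-scalar symbol of $L_2$, including its degenerate root structure at $z=i|\eta|$ --- precisely the computation the paper declares too complicated and deliberately circumvents.
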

\begin{proof}
The ellipticity of a general operator $P=(L,B)$ is equivalent to the existence of a uniform estimate:
\begin{equation}
||(h,v,\sigma)||_{C^{m,\alpha}}\leq C(||L(h,v,\sigma)||_{C^{m-k,\alpha}} +||B(h,v,\sigma)||_{C^{m-j,\alpha}}+||(h,v,\sigma)||_{C^0}),
\end{equation}
together with such an estimate for the adjoint operator. In the expression above, $k$ and $j$ denote the order of derivative in the principal parts of the operators $L$ and $B$. 

The operator $P_2$ is then elliptic as a consequence of the following two facts, which are proved in Lemma 3.4 and Proposition 3.5 below.
\\(1) The inequality $(3.18)$ holds for $P_2$;
\\(2) The operator $P_2$ is formally self-adjoint.

\end{proof}
\begin{lemma}
Inequality $(3.18)$ holds for $P_2$, i.e.
\begin{equation}
||(h,v,\sigma)||_{C^{m,\alpha}}\leq C(||L_2(h,v,\sigma)||_{C^{m-2,\alpha}} +||B_2(h,v,\sigma)||_{C^{m-j,\alpha}}+||(h,v,\sigma)||_{C^0}).
\end{equation}
\end{lemma}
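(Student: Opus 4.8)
The plan is to establish the \emph{a priori} estimate $(3.19)$ for $P_2=(L_2,B_2)$ by comparison with the intermediate operator $\hat P=(L_1,B_2)$, whose ellipticity — hence a corresponding estimate — is already in hand from Lemma 3.2. The key observation is that $L_2$ and $L_1$ have the \emph{same} boundary operator $B_2$, and their interior operators differ only by the term $Z_2(h)=D^2(\mathrm{tr}\,h)+\Delta_g(\mathrm{tr}\,h)g+(\delta\delta h)g$, which is built entirely out of the function $t:=\mathrm{tr}_g h$ and its second derivatives. So I would first extract a scalar equation for $t$: taking the trace of the first component of $L_2(h,v,\sigma)$ gives, up to lower-order terms, a second-order elliptic scalar operator applied to $t$ (schematically $c\,\Delta_g t + \ldots = \mathrm{tr}\big(L_2(h,v,\sigma)\big)_{1}$ for a nonzero constant $c$), while the second and third components of $L_2$ already are genuine scalar elliptic equations for $v$ and $\sigma$. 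Together with a boundary condition for $t$ coming from the $g^T$-component of $B_2$ (namely $e^{-2u}(-2vg+h)^T$ controls $t^T$ on $\partial S$, and one combines this with the mean-curvature component), this yields Schauder estimates
$$
\|t\|_{C^{m,\alpha}}+\|v\|_{C^{m,\alpha}}+\|\sigma\|_{C^{m,\alpha}}\le C\big(\|L_2(h,v,\sigma)\|_{C^{m-2,\alpha}}+\|B_2(h,v,\sigma)\|_{C^{m-j,\alpha}}+\|(h,v,\sigma)\|_{C^0}\big).
$$

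Second, I would feed this back into the comparison. Write $L_1(h,v,\sigma)=L_2(h,v,\sigma)+(Z_2(h),0,0)$. Since $Z_2(h)$ is a second-order differential operator in $t$ alone, the bound on $\|t\|_{C^{m,\alpha}}$ just obtained controls $\|Z_2(h)\|_{C^{m-2,\alpha}}$ by the right-hand side of $(3.19)$. Hence
$$
\|L_1(h,v,\sigma)\|_{C^{m-2,\alpha}}\le \|L_2(h,v,\sigma)\|_{C^{m-2,\alpha}}+\|Z_2(h)\|_{C^{m-2,\alpha}}\le C(\text{RHS of }(3.19)),
$$
and $\|B_2(h,v,\sigma)\|_{C^{m-j,\alpha}}$ appears on the right verbatim. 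Applying the elliptic estimate for $\hat P=(L_1,B_2)$ from Lemma 3.2 then gives $\|(h,v,\sigma)\|_{C^{m,\alpha}}\le C(\text{RHS of }(3.19))$, which is exactly $(3.19)$. One should also note the weighted spaces: the decay rate $\delta\in(\tfrac12,1)$ is non-exceptional for the relevant Laplace-type operators on $\mathbb R^3\setminus B^3$, so the interior Schauder/weighted estimates hold globally with the stated weights, and the $C^0$ error term absorbs the finitely many low-frequency obstructions.

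The main obstacle I expect is the \emph{boundary} bookkeeping for $t=\mathrm{tr}\,h$: unlike the interior, where tracing the Einstein-type equation cleanly decouples $t$, on $\partial S$ the components of $B_2$ mix the tangential part $h^T$, the normal-normal part $h_{00}=h(\mathbf n,\mathbf n)$, the mixed part $h_{0i}=h(\mathbf n,e_i)$, and $v$, so recovering a well-posed boundary condition for the scalar $t$ — of the right order and satisfying the Lopatinski--Shapiro condition for the scalar problem — requires carefully combining the $g^T$-equation (which gives $h^T-2vg^T$) with the mean-curvature equation (which involves $H'_h$, i.e. the normal derivative of $h^T$ together with $h_{00}$) and the gauge equation $\delta_g h=0$ on $\partial S$ (which relates $\partial_{\mathbf n}h_{0i}$ and $\partial_{\mathbf n}h_{00}$ to tangential derivatives). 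Carrying this out is essentially the content already verified symbolically in Lemma 3.2; the point here is to upgrade that symbol computation to the genuine Schauder estimate. A clean alternative, which I would pursue if the direct decoupling proves awkward, is to observe that $(3.19)$ for $P_2$ is \emph{equivalent} to the Lopatinski--Shapiro condition for the symbol pair $(L_2(\xi),B_2(\xi))$, and to deduce the latter from that of $(L_1(\xi),B_2(\xi))$ — already proved in Lemma 3.2 — by the same scalar-reduction argument performed purely at the level of principal symbols: the extra symbol $Z_2(\xi)h = -\xi\otimes\xi\,\hat t - |\xi|^2 g\,\hat t - \ldots$ (with $\hat t=\mathrm{tr}\,h$) vanishes precisely on the subspace where the trace is controlled by the other equations, so the kernels of the two boundary-value symbol systems coincide.
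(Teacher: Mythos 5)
Your overall skeleton is the same as the paper's (compare $P_2=(L_2,B_2)$ with the intermediate operator $\hat P=(L_1,B_2)$ from Lemma 3.2 and control the difference $Z_2(h)$), but the execution rests on a false claim. You assert that $Z_2(h)$ is ``built entirely out of $t=\mathrm{tr}_g h$ and its second derivatives'' and that tracing the first component of $L_2$ decouples a scalar elliptic equation for $t$. This is not so: $Z_2(h)=D^2(\mathrm{tr}\,h)+\Delta_g(\mathrm{tr}\,h)\,g+(\delta\delta h)\,g$ contains the full double divergence $\delta\delta h=\nabla^i\nabla^j h_{ij}$, which involves all components of $h$; correspondingly, the trace of the first component of $L_2$ is (up to sign conventions and lower order terms) a combination of $\Delta_g(\mathrm{tr}\,h)$ \emph{and} $\delta\delta h$, not a scalar operator in $t$ alone. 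Consequently your bound on $\|t\|_{C^{m,\alpha}}$, even if obtained, would not control $\|Z_2(h)\|_{C^{m-2,\alpha}}$, and the comparison step $\|L_1\|\le\|L_2\|+\|Z_2(h)\|$ does not close. The same error undermines your symbol-level fallback: the extra symbol contains $h(\xi,\xi)\,g$, so it does not ``vanish on the subspace where the trace is controlled,'' and the asserted coincidence of the kernels of the two boundary symbol systems is unjustified. Note also that $B_2$ only prescribes $\delta_g h$ \emph{on} $\partial S$; membership in its kernel gives no interior information on $\delta h$, so some genuine mechanism is needed to control $\delta h$ in the bulk.

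That mechanism is exactly what your proposal is missing and what the paper supplies. The paper first controls $\delta h$ globally: since $L_2$ is the second-order part of the linearization of the gauged operator $\Phi(g)=Ric_g+\delta^*G_2+T_2-\dots$ whose leading term is the Einstein tensor, the (linearized) contracted Bianchi identity yields $\delta\delta^*(\delta h)=\delta L_2(h)$ modulo lower-order terms; together with the Dirichlet data for $\delta h$ coming from the gauge component of $B_2$, ellipticity of $\delta\delta^*$ gives the estimate (3.20) for $\|\delta h\|_{C^{m-1,\alpha}}$. Second, it controls $D^2(\mathrm{tr}\,h)$ via the linearized Gauss equation on $\partial S$: this produces boundary control of $\mathrm{tr}\,h^T-2v$, hence of $v|_{\partial S}$ (using the $h^T-2vg^T$ boundary component), then of $v$ and $\sigma$ on all of $S$ by the Dirichlet and Neumann Laplace problems, and finally the remaining estimate for $\|D^2\mathrm{tr}\,h\|_{C^{m-2,\alpha}}$ in terms of $L_2(h)$, $\delta h|_{\partial S}$, $h^T|_{\partial S}$ and $H'_h|_{\partial S}$ is quoted from Lemma 3.2 of [AK]. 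Without an argument replacing these two steps — in particular the Bianchi-identity control of $\delta h$ — your proof does not go through.
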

\begin{proof}
By Lemma 3.2, the inequality $(3.19)$ must hold if $L_2$ is replaced by $L_1$, i.e.
$$||(h,v,\sigma)||_{C^{m,\alpha}}\leq C(||L_1(h,v,\sigma)||_{C^{m-2,\alpha}} +||B_2(h,v,\sigma)||_{C^{m-j,\alpha}}+||(h,v,\sigma)||_{C^0}).$$
Observe that $L_1(h,v,\sigma)=L_2(h,v,\sigma)+\big(D^2(trh)+\Delta_g (trh)g+(\delta\delta h)g , 0,0\big)$. Thus the inequality (3.19) will be true if one can control the terms $\delta\delta h$ and $D^2(trh)$. So it suffices to prove
\begin{equation}
||\delta h||_{C^{m-1,\alpha}}\leq C(||L_2(h,v,\sigma)||_{C^{m-2,\alpha}} +||B_2(h,v,\sigma)||_{C^{m-j,\alpha}}+||(h,v,\sigma)||_{C^0},
\end{equation}
and
\begin{equation}
||D^2trh||_{C^{m-2,\alpha}}\leq C(||L_2(h,v,\sigma)||_{C^{m-2,\alpha}} +||B_2(h,v,\sigma)||_{C^{m-j,\alpha}}+||(h,v,\sigma)||_{C^0}).
\end{equation}
First notice that, $L_2(h)$ is the $2^{nd}$order part of the linearization of the map:
$$\Phi(g)=Ric_g+\delta^{\ast}G_2+T_2-2du\otimes du-2e^{-4u}d\phi\otimes d\phi.$$
From the proof of Lemma 2.4, one sees that
$$\delta\Phi(g)+2(\Delta_g u-2e^{-4u}|d\phi|^2)du+2e^{-4u}(\Delta_g\phi+4\langle du,d\phi\rangle)d\phi=\delta\delta^{\ast}\delta_{\tilde{g}}g.$$
Assume $g$ is a zero of $\Phi$. Linearizing the above equation at $\tilde{g}=g$ with respect to $h$ gives
$$\delta D\Phi(h)+O_0=\delta\delta^{\ast}(\delta h),$$
where $O_0$ denotes terms of $0$-derivative order with respect to $h$. In the equation above, it is of derivative order 3 on the right hand side, so the left hand side must be also of order 3, hence we obtain,
$$\delta\delta^{\ast}(\delta h)=\delta L_2(h).$$
The operator $\delta\delta^{\ast}$ is elliptic with respect to Dirichlet boundary conditions, and $\delta h$ is included in the boundary operator $B_2$. Thus inequality $(3.20)$ holds.

To prove inequality $(3.21)$, we use the Gauss equation at $\partial S$ :
$$|A_{g}|^2-H_g+s_{g^T}=s_g-2Ric_g(\mathbf n,\mathbf n),$$
where $A_g$ is the second fundamental form of $\partial S\subset (S,g)$ and $s_{g^T}$ is the scalar curvature of the metric $g^T$ on $\partial S$. The right side of the equation above can be converted as 
\begin{equation*}
\begin{split}
&s_g-2Ric_g(\mathbf n, \mathbf n)=-2[Ric_g(\mathbf n,\mathbf n)-\frac{1}{2}s_g\cdot g(\mathbf n,\mathbf n)]\\
&=-2[Ric_g-\frac{1}{2}s_g\cdot g+\delta^*\delta_{\tilde g}g](\mathbf n,\mathbf n)+2\delta^*\delta_{\tilde g}g(\mathbf n,\mathbf n).
\end{split}
\end{equation*}
Notice that linearization of the term $2[Ric_g-\frac{1}{2}s_g\cdot g+\delta^*\delta_{\tilde g}g]$ is $L_2$. So we take linearization of the Gauss equation and obtain,
$$(|A_g|^2-H_g+s_{g^T})'_{h}=-L_2(\mathbf n,\mathbf n)+2\delta^{\ast}\delta h(\mathbf n,\mathbf n)+O_1,$$
where $O_1$ denotes terms of derivative order no higher than $1$ with respect to $h$. Observe that the right side of the equation above is well controlled because of (3.20). On the left side, $s_{g^T}'=\Delta_{g^T}(trh^T)+\delta\delta(h^T)+O_1$ and the terms $A'_h$, $H'_h$ only involve first order derivatives in $h$ so they can be ignored according to the interpolation inequality,
$||h||_{C^{m-1,\alpha}}\leq \epsilon ||h||_{C^{m,\alpha}}+\epsilon^{-1}||h||_{C^0}$. Thus we get control of the term $(\Delta_{g^T}(trh^T)+\delta\delta(h^T))$.
Writing $h^T=B_{2,0}+2vg^T$, where $B_{2,0}=h^T-2vg^T$ is one of the boundary conditions in $B_2$, it follows that 
$$s_{g^T}'=\Delta_{g^T}(trh^T-2v)+\delta\delta(B_{2,0})+O_1$$ 
Therefore, by the ellipticity of the Laplace operator on $\partial S$, we obtain the estimate for $(trh^T-2v)$ along $\partial S$:
\begin{equation}
||(trh^T-2v)|_{\partial S}||_{C^{m,\alpha}}\leq C(||L_2(h,v,\sigma)||_{C^{m-2,\alpha}} +||B_2(h,v,\sigma)||_{C^{m-j,\alpha}}+||h||_{C^0}).
\end{equation}

Since the term $(h^T-2vg^T)$ is included in the boundary operator, $tr(h^T-2vg^T)=(trh^T-4v)|_{\partial S}$ is also controlled. Comparing with $(3.22)$, we obtain the control for $v$ on $\partial S$,
$$||v|_{\partial S}||_{C^{m,\alpha}}\leq C(||L_2(h,v,\sigma)||_{C^{m-2,\alpha}} +||B_2(h,v,\sigma)||_{C^{m-j,\alpha}}+||h||_{C^0}).$$
In addition, $\Delta_g v$ is one of the components of the interior operator $L_2$, so from the ellipticity of Laplace operator with Dirichlet boundary condition, we obtain the uniform estimate for $v$ over $S$, 
$$||v||_{C^{m,\alpha}}\leq C(||L_2(h,v,\sigma)||_{C^{m-2,\alpha}} +||B_2(h,v,\sigma)||_{C^{m-j,\alpha}}+||(h,v,\sigma)||_{C^0}).$$

Furthermore, observe that $\Delta_g\sigma$ is the last component of the interior operator $L_2$ and $\mathbf n(\sigma)$ is one of the boundary terms in $B_2$. Thus, based on the ellipticity of Laplace operator with Neumann boundary condition, we also have the uniform estimate for $\sigma$ over $S$:
$$||\sigma||_{C^{m,\alpha}}\leq C(||L_2(h,v,\sigma)||_{C^{m-2,\alpha}} +||B_2(h,v,\sigma)||_{C^{m-j,\alpha}}+||(h,v,\sigma)||_{C^0}).$$

Now with $v,\sigma$ being well controlled, inequality $(3.21)$ is equivalent to the following one,
\begin{equation*}
\begin{split}
||D^2trh||_{C^{m-2,\alpha}}\leq C(&||L_2(h)||_{C^{m-2,\alpha}} +||\delta(h)|_{\partial S}||_{C^{m-1,\alpha}}+||h^T|_{\partial S}||_{C^{m,\alpha}}\\
&+||H'_h|_{\partial S}||_{C^{m-1,\alpha}}+||h||_{C^0}).
\end{split}
\end{equation*}
This estimate is proved in Lemma 3.2 of $[AK]$. So we complete the proof of the uniform estimate.

\end{proof}
\begin{proposition}
Let $\mathcal M_2$ be the space of data $(h,v,\sigma)$ which is in the kernel of the boundary operator $B_2$, i.e.
\begin{equation}
\begin{split}
\mathcal M_2=\{\quad(h,v,\sigma)\in&[(S_2)_{\delta}^{m,\alpha}\times C_{\delta}^{m,\alpha}\times C_{\delta}^{m,\alpha}](S):\\
&\begin{cases}
\delta_g(h)=0,\\
~h^T-2vg^T=0,\\
~H'_h-2\mathbf n(v)=0,\\
~\mathbf n(\sigma)=0,
\end{cases}\quad\text{on}~\partial S\quad\}.
\end{split}
\end{equation}
Then the operator $L_2: \mathcal M_2\rightarrow (S_2)_{\delta}^{m-2,\alpha}\times C_{\delta}^{m-2,\alpha}\times C_{\delta}^{m-2,\alpha}](S)$, given by
\begin{equation*}
\begin{split}
L_2(h,v,w)&=\{~D^{\ast}Dh-Z_2(h),~~8\Delta_g v,~~8e^{-4u}[\Delta_g \sigma+4\langle du,d\sigma\rangle]~\}\\
&=\{~D^{\ast}Dh-D^2(trh)-\Delta_g(trh)g-(\delta\delta h)g,~~8\Delta_g v,~~8e^{-4u}[\Delta_g \sigma+4\langle du,d\sigma\rangle]~\},
\end{split}
\end{equation*}
is formally self-adjoint.
\end{proposition}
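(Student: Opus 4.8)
The plan is to verify the symmetry identity
$$\int_S\langle L_2x_1,x_2\rangle\,dV_g=\int_S\langle L_2x_2,x_1\rangle\,dV_g\qquad\text{for all }x_i=(h_i,v_i,\sigma_i)\in\mathcal M_2,$$
where the pairing of triples is $\langle(h,v,\sigma),(h',v',\sigma')\rangle=\langle h,h'\rangle_g+vv'+\sigma\sigma'$. The tool is Green's identity: integrating by parts twice in each of the three blocks of $L_2$ — which are uncoupled in the interior — the difference of the two sides collapses to a sum of boundary integrals over $\partial S$ and over the large spheres $S_r$, $r\to\infty$. The bulk integrals come out symmetric; this is exactly what the scalar factors $2,8,8e^{-4u}$ and the retained first-order term $4\langle du,d\sigma\rangle$ in $L_2$ are set up to guarantee (cf.\ \S2.4). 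Hence the whole content is the vanishing of the boundary terms on $\mathcal M_2$, where the $h$- and $v$-blocks interact only through the boundary conditions.

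\emph{Step 1: the $\sigma$-block.} Since $8e^{-4u}(\Delta_g\sigma+4\langle du,d\sigma\rangle)=8\,\delta_g(e^{-4u}d\sigma)$, two integrations by parts give
$$\int_S\sigma_2\cdot 8\delta_g(e^{-4u}d\sigma_1)\,dV_g=8\int_Se^{-4u}\langle d\sigma_1,d\sigma_2\rangle\,dV_g-8\int_{\partial S}e^{-4u}\sigma_2\,\mathbf n(\sigma_1)\,dA-\lim_{r\to\infty}8\int_{S_r}e^{-4u}\sigma_2\,\mathbf n(\sigma_1)\,dA.$$
The $\partial S$-integral vanishes because $\mathbf n(\sigma_i)=0$ there; the limit at infinity vanishes because the integrand is $O(r^{-2\delta-1})$ while the area of $S_r$ is $O(r^2)$ and $\delta>\frac12$; the remaining bulk integral is symmetric. (This is where the first-order term $4\langle du,d\sigma\rangle$ is needed: with $\Delta_g\sigma$ alone one is left with the unsymmetric term $\propto e^{-4u}\sigma_2\langle du,d\sigma_1\rangle$.) As $\sigma$ appears only in its own equation, and the other two equations contain no $\sigma$, this disposes of the $\sigma$-block.

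\emph{Step 2: the $(h,v)$-block.} I would first note that $D^{\ast}Dh-Z_2(h)$ is itself formally self-adjoint modulo boundary: $D^{\ast}D$ and $h\mapsto\Delta_g(\mathrm{tr}\,h)g$ are self-adjoint, and in $Z_2(h)=D^2(\mathrm{tr}\,h)+\Delta_g(\mathrm{tr}\,h)g+(\delta\delta h)g$ the maps $h\mapsto D^2(\mathrm{tr}\,h)$ and $h\mapsto(\delta\delta h)g$ are $L^2$-formal adjoints of one another. So, exactly as in Step 1, the antisymmetric part of the $h$- and $v$-contributions to $\int_S\langle L_2x_1,x_2\rangle$ reduces to a pure boundary integral $\int_{\partial S}\mathcal Q(x_1,x_2)\,dA$ (plus an $O(r^{-2\delta-1})$ term over $S_r$ that drops out since $\delta>\frac12$), and it remains to show $\mathcal Q$ vanishes on $\mathcal M_2$. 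I would compute $\mathcal Q$ by integrating by parts the four terms of $D^{\ast}Dh-Z_2(h)$ together with $8\Delta_gv$, decomposing the boundary $1$-jet of $h$ into $h^T$, $h(\mathbf n)^T$, $h(\mathbf n,\mathbf n)$ and their normal derivatives, and using the linearized Gauss equation and the formula for $H'_h$ from \S5. The expected outcome is that $\delta_gh_i=0$ on $\partial S$ cancels every term containing $\delta h_i$, and the remaining terms regroup into a pairing of the variations $h_i^T-2v_ig^T$ and $H'_{h_i}-2\mathbf n(v_i)$ of the prescribed data $g_S^T$, $H_{g_S}$ against conjugate momenta built from $x_i$ — schematically
$$\mathcal Q(x_1,x_2)=\big[\,\Pi(x_1)\cdot(h_2^T-2v_2g^T)+\rho(x_1)\,(H'_{h_2}-2\mathbf n(v_2))\,\big]-\big[\,1\leftrightarrow 2\,\big],$$
which is $0$ on $\mathcal M_2$. (Conceptually $\mathcal Q$ is the second variation of the Gibbons--Hawking-type boundary term of the reduced Einstein--Hilbert action on $S$; this is the reason $(g_S^T,H_{g_S},\mathbf n_{g_S}(\phi))$ is the natural boundary data.) Together with Step 1 this proves the proposition.

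\emph{Main obstacle.} The hard part is the explicit computation of $\mathcal Q$ in Step 2: collecting all $\partial S$-contributions of $D^{\ast}Dh-Z_2(h)$ — in particular those of $D^2(\mathrm{tr}\,h)$ and $(\delta\delta h)g$, which after integration by parts along $\partial S$ produce second-order tangential terms such as $\Delta_{g^T}$ acting on components of $h$ — and of $8\Delta_gv$, then using $\delta_gh=0$ and regrouping so that everything lands inside the combinations $h^T-2vg^T$ and $H'_h-2\mathbf n(v)$ that define $\mathcal M_2$. It is this matching that forces the normalizations $2$ and $8$ and that pairs the divergence gauge term $\delta^{\ast}\delta$ with the gauge boundary condition $\delta_gh=0$. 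In the static case this is precisely the self-adjointness computation of [AK] (including its Lemma 3.2), which I would carry over, the only new ingredient, the field $\sigma$, having already been separated off in Step 1.
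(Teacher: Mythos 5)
Your Step 1 is fine: writing the third component as $8\,\delta_g(e^{-4u}d\sigma)$ puts it in divergence form, the Neumann condition $\mathbf n(\sigma)=0$ kills the $\partial S$ term, and the decay rate $r^{-2\delta-1}$ with $\delta>\tfrac12$ kills the term at infinity. The genuine gap is Step 2, which is the entire content of the proposition: the cancellation of the coupled $(h,v)$ boundary terms is only asserted as an ``expected outcome.'' You never exhibit the Green's-identity boundary form $\mathcal Q$ nor show that it regroups into the combinations $h^T-2vg^T$ and $H'_h-2\mathbf n(v)$ paired against conjugate quantities; but this regrouping is precisely where the normalizations $2$ and $8$, the exact form of $Z_2$ (the second-order tangential contributions of $D^2(\mathrm{tr}\,h)$ and $(\delta\delta h)g$ along $\partial S$), and the interaction of the gauge condition $\delta_g h=0$ with the mean-curvature condition must all be checked. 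Your fallback of importing the computation from [AK] does not close this: the result of [AK] quoted in this paper (its Lemma 3.2) is the estimate for $D^2\mathrm{tr}\,h$ used in Lemma 3.4, not a self-adjointness identity, and the static self-adjointness in [AK] is itself obtained variationally rather than by the brute-force boundary bookkeeping you propose. So as written the proof of the key step is missing, not merely long.

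The paper avoids computing $\mathcal Q$ altogether by developing exactly the idea you relegate to a parenthesis: it realizes $L_2$ (up to sign/normalization and the gauge term $\delta^*\delta$) as the second variation of the reduced action $I=\int_S\big(s-2|du|^2-2e^{-4u}|d\phi|^2\big)\,dvol_g+2\int_{\partial S}H\,dvol_{g^T}+16\pi m_{\mathrm{ADM}}(g)$ at a solution of (II). The first-variation formulas (3.25)--(3.27), with the boundary conditions $h^T=2vg^T$, $H'_h=2\mathbf n(v)$, $\mathbf n(\sigma)=0$, reduce $I'$ to bulk terms plus $\int_{\partial S}2vH$, the symmetry $I''(x_1,x_2)=I''(x_2,x_1)$ then yields (3.32) with all boundary terms gone, the condition $\delta h=\delta k=0$ on $\partial S$ handles $\delta^*\delta$ via (3.33), and matching second- and first-order parts with $-\tfrac12 L_2$ (zero-order terms being symmetric) finishes the proof. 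To complete your write-up you must either carry out the explicit computation of $\mathcal Q$ and its vanishing on $\mathcal M_2$, or promote your parenthetical remark into the argument, as the paper does.
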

\begin{proof}
We will prove this proposition by showing that $L_2$ arises as the $2^{\text{nd}}$ variation of a natural variational problem on the data $(g,u,\phi)$.

To begin, the Einstein equation $Ein_{g^{(4)}}=0$ is the functional derivative of the Einstein-Hilbert action 
$$I_{\text{EH}}=\int_{V^{(4)}}R_{g^{(4)}}dvol_{g^{(4)}}.$$ 
Reducing this action from 4-dimensional spacetime $V^{(4)}$ to the 3-dimensional quotient space $S$, one obtains the following functional on the data $(g,u,\phi)$, of which the Euler-Lagrange equations are exactly the field equations (II) in $\S2$,  
$$I_{\text{eff}}=\int_{S} s-2|du|^2-2e^{-4u}|d\phi|^2 dvol_g.$$ 
We refer to [H1][H2] for further discussion of the action $I_{\text{eff}}$.

Since the boundary $\partial S$ is nonempty, {as is well known} it is necessary to add boundary terms to the action {such as Gibbons-Hawking boundary terms, cf.[GH]}. The right action with boundary terms in our case is given by
$$I=\int_{S} s-2|du|^2-2e^{-4u}|d\phi|^2 dvol_g+2\int_{\partial S}H_gdvol_{g^T}+16\pi m_{\text{ADM}}(g).$$
Here the last term $m_{\text{ADM}}(g)$ is the ADM mass of the asymptotically flat manifold $(S,g)$.

Next, let $(\mathbf E,\mathbf F,\mathbf H)$ be the equations in the system (II), i.e. 
\begin{equation}
\begin{split}
&\mathbf E[(g,u,\phi)]=\frac{1}{2}(s-2|du|^2-2e^{-4u}|d\phi|^2)g-Ric_g+2(du)^2+2e^{-4u}(d\phi)^2,\\ 
&\mathbf F[(g,u,\phi)]=-4\Delta_g u+8e^{-4u}|d\phi|^2,\\
&\mathbf H[(g,u,\phi)]=-4e^{-4u}(\Delta_g\phi+4\langle du,d\phi\rangle).
\end{split}
\end{equation}
Then the variation of $I$ at $g$ with respect to the infinitesimal deformation $h$ is 
\begin{equation}
\begin{split}
I'_g(h)&=\int_{S}\langle \mathbf E,h\rangle dvol_g +\int_{\partial S} -\langle A,h\rangle+Htrh^T dvol_{g^T}\\
&\quad\quad\quad
-\int_{\partial S_{\infty}} \mathbf n(trh)+\delta h(\mathbf n)dvol_{\partial S_{\infty}}+16\pi(m_{ADM}(g))'_h.
\end{split}
\end{equation}
We refer to \S5.3 for the details of the computation. To abbreviate notation, we shall omit the volume form in the following.

Notice that the terms in the second line of the equation $(3.25)$ can be removed, because we have 
$$\int_{\partial S_{\infty}} \mathbf n(trh)+\delta h(\mathbf n)=16\pi(m_{ADM}(g))'_h,$$
based on the definition of ADM mass and its variation, cf.[RT],[B1].

Basic computation shows the variations of $I$ with respect to deformations $v,\sigma$ of $u$ and $\phi$ are given by,
\begin{equation}
I'_u(v)=\int_{S}\langle \mathbf F, v\rangle +\int_{\partial S} -4\mathbf n(u)v~+\int_{\partial S_{\infty}} -4\mathbf n(u)v,
\end{equation}
and
\begin{equation}
I'_{\phi}(\sigma)=\int_{S}\langle \mathbf H,\sigma\rangle +\int_{\partial S} -4e^{-4u}\mathbf n(\phi)\sigma~+\int_{\partial S_{\infty}} -4e^{-4u}\mathbf n(\phi)\sigma.
\end{equation}
By simply checking the decay rate, one sees easily that the boundary terms at infinity in the expressions above are both zero.

Now let $(g,u,\phi)$ be a triple such that $(\mathbf E,\mathbf F,\mathbf H)[(g,u,\phi)]=0$, and take a 2-parameter varation of data $(g_{st},u_{st},\phi_{st})=(g,u,\phi)+s(h,v,\sigma)+t(k,w,\zeta)$, with infinitesimal deformations $(h,v,\sigma),(k,w,\zeta)\in\mathcal M_2$.

Based on the boundary conditions in the expression $(3.23)$, we have $h^T=2vg^T$. The equation $(3.25)$ then becomes:
\begin{equation}
I'_g(h)=\int_{S}\langle \mathbf E,h\rangle +\int_{\partial S} 2vH.
\end{equation}
Take one more variation of the equation $(3.28)$ with respect to $k$, and we obtain
\begin{equation}
\begin{split}
I''_g(h,k)=\int_{S}\langle \mathbf E'_k,h\rangle +\int_{\partial S} 2vH'_k+4vw H.
\end{split}
\end{equation}
Similar operation of the equations $(3.26)$ and $(3.27)$ yields,
\begin{equation}
I''_u(v,w)=\int_{S}\langle \mathbf F'_{w}, v\rangle +\int_{\partial S} -4\mathbf n(w)v,
\end{equation}
and
\begin{equation}
I''_{\phi}(\sigma,\zeta)=\int_{S}\langle \mathbf H'_{\zeta},\sigma\rangle +\int_{\partial S} -4e^{-4u}\mathbf n(\zeta)\sigma.
\end{equation}

From the symmetry of second variation, we know that $I''(h,k)=I''(k,h)$, $I''(w,v)=I''(v,w)$ and $I''(\sigma,\zeta)=I''(\zeta,\sigma)$. The equations $(3.29-31)$ then imply that:
\begin{equation*}
\begin{split}
&\int_{S}[\langle \mathbf E'_k,h\rangle +\langle \mathbf F'_{w},v\rangle +\langle \mathbf H'_{\zeta},\sigma\rangle] +\int_{\partial S} [2vH'_k+4vw H-4\mathbf n(w)v-4e^{-4u}\mathbf n(\zeta)\sigma]\\
=&
\int_{S}[\langle \mathbf E'_h,k\rangle + \langle \mathbf F'_v,w\rangle +\langle \mathbf H'_{\sigma},\zeta\rangle] +\int_{\partial S} [2w H'_h+4vw H-4\mathbf n(v)w-4e^{-4u}\mathbf n(\sigma)\zeta].
\end{split}
\end{equation*}
By the boundary condition in $(3.23)$, $H'_h-2\mathbf n(v)=0~\mathbf n(\sigma)=0$ on $\partial S$, and the same for $(k,w,\zeta)$. Thus we can remove the boundary terms above and obtain
\begin{equation}
\begin{split}
\int_{S}[\langle  \mathbf E'_k,h\rangle+\langle  \mathbf F'_{w},v\rangle+\langle  \mathbf H'_{\zeta},\sigma\rangle] =\int_{S}[\langle  \mathbf E'_h,k\rangle+\langle  \mathbf F'_v,w\rangle+\langle  \mathbf H'_{\sigma},\zeta\rangle] .
\end{split}
\end{equation}

On the other hand, from the boundary condition $\delta h=\delta k=0$ on $\partial S$, it follows that,
\begin{equation}
\int_S\langle \delta^{\ast}\delta k,h\rangle=\int_S\langle \delta k, \delta h\rangle=\int_S\langle \delta^{\ast}\delta h,k\rangle.
\end{equation}

Combining equations $(3.32)$ and $(3.33)$, we obtain,
\begin{equation}
\int_{S}\langle ( \mathbf E'_k-\delta^{\ast}\delta k,  \mathbf F'_{w}, \mathbf H'_{\zeta}),(h,v,\sigma)\rangle =\int_{S}\langle  \mathbf (\mathbf E'_h-\delta^{\ast}\delta h,  \mathbf F'_v,  \mathbf H'_{\sigma}),(k,w,\zeta)\rangle .
\end{equation}
Notice that the terms of second order and first order derivative in $ (\mathbf E'_h-\delta^*\delta h,  \mathbf F'_v,  \mathbf H'_{\alpha})$ are the same as in the operator $-\frac{1}{2}L_2$; and the zero order terms in the equation $(3.34)$ can be removed because of symmetry. Therefore it follows that
$$\int_S\langle L_2(k,w,\zeta),(h,v,\sigma)\rangle=\int_S\langle L_2(h,v,\sigma),(k,w,\zeta)\rangle,$$
which proves the formal self-adjointness of the operator $P_2$.

\end{proof}
Ellipticity of the operator $P_2$ implies that the boundary value problem $(2.7)$ with the divergence gauge is elliptic. Together with the {local} equivalence between the {sets} $\mathbf{Z}_C$ and $\mathcal{E}_C$ in \S 2.3, we conclude that 
the collection of boundary conditions $(2.6)$ is elliptic for the stationary vacuum field equations (II) modulo diffeomorphisms in $\mathcal D^{m+1,\alpha}_0(S)$.
\subsection{Back to $g_S$}
It is now basically trivial to prove the ellipticity of the system $(1.6)$ equipped with boundary conditions $(1.7)$, using the result we have obtained.

First observe that, by combining the first and second equations in $(1.6)$, the system is equivalent to the following one, 
\begin{equation}
\begin{cases}
Ric_{g_S}-D^2u-(du)^2-2e^{-4u}(d\phi\otimes d\phi-|d\phi|^2g_S)\\
\quad\quad\quad\quad\quad\quad\quad\quad\quad+(\Delta_{g_S} u-|du|^2-2e^{-4u}|d\phi|^2)g_S=0,\\
\Delta_{g_S} u-|du|^2-2e^{-4u}|d\phi|^2=0,\\
\Delta_{g_S}\phi+3 \langle du,d\phi\rangle=0.
\end{cases}
\end{equation}
The trace of the first equation above is given by
$$s_{g_S}+4\Delta_{g_S} u-4|du|^2-2e^{-4u}|d\phi|^2.$$
Let $T_S$ be as, $T_S=-\frac{1}{2}(s_{g_S}+4\Delta_{g_S} u-4|du|^2-2e^{-4u}|d\phi|^2)g_S$. Let $G_S$ be the pull back by conformal transformation of the divergence gauge term $\delta_{\tilde g}g$, i.e. $G_S=\delta_{e^{2u}\tilde g_S}(e^{2u}g_S),$
where $\tilde g_S$ is a reference metric near $g_S$.

Inserting $(T_S+\delta^*_{e^{2u}g_S}G_S)$ to the first equation in system $(3.35)$, we obtain
\begin{equation}
\begin{cases}
Ric_{g_S}-D^2u-(du)^2-2e^{-4u}(d\phi\otimes d\phi-|d\phi|^2g_S)\\
\quad\quad\quad+(\Delta_{g_S} u-|du|^2-2e^{-4u}|d\phi|^2)g_S+T_S+\delta^*_{e^{2u}g_S}G_S=0,\\
\Delta_{g_S} u-|du|^2-2e^{-4u}|d\phi|^2=0,\\
\Delta_{g_S} \phi+3 \langle du,d\phi \rangle=0.
\end{cases}
\end{equation}
According to the system above, we define a differential operator $\mathcal{P}_S=(\mathcal{L}_S,\mathcal{B}_S)$, which consists of the interior operator $\mathcal{L}_S$, mapping the data $(g_S,u,\phi)$ to the interior expressions in $(3.36)$, given by
\begin{equation*}
\begin{split}
\mathcal{L}_S:[Met_{\delta}^{m,\alpha}\times C^{m,\alpha}_{\delta}\times &C^{m,\alpha}_{\delta} ](S)
\rightarrow [(S_2)^{m-2,\alpha}_{\delta+2}\times C^{m-2,\alpha}_{\delta+2}\times C^{m-2,\alpha}_{\delta+2}](S)\\
\mathcal{L}_S(g_S,u,\phi)=\{~&2[Ric_{g_S}-D^2u-(du)^2-2e^{-4u}(d\phi\otimes d\phi-|d\phi|^2g_S)\\
&\quad\quad\quad\quad+(\Delta_{g_S} u-|du|^2-2e^{-4u}|d\phi|^2)g_S+T_S+\delta^*_{e^{2u}g_S}G_S],\\
&8e^{-2u}(\Delta_{g_S} u-|du|^2-2e^{-4u}|d\phi|^2),\\
& 8e^{-6u}(\Delta_{g_S} \phi+3 \langle du,d\phi \rangle)~\};\\
\end{split}
\end{equation*}
and the boundary operator $\mathcal B_S$, mapping the data $(g_S,u,\phi)$ to boundary data including the gauge term $G_S$ and the terms in $(1.7)$, given by
\begin{equation*}
\begin{split}
&\mathcal{B}_S:~[Met_{\delta}^{m,\alpha}\times C^{m,\alpha}_{\delta}\times C^{m,\alpha}_{\delta}](S)\to \mathbf B^{m,\alpha}(S),\\
&\quad\quad\mathcal{B}(g,u,\omega)=
\{~G_S,\quad g_S^T|_{\partial S}-\gamma,\quad H_{g_S}-\lambda,\quad \mathbf n_{g_S}(\phi)-f~\}.
\end{split}
\end{equation*}
In addition, define an operator $\mathcal{Q}$ as the conformal transformation in $(2.2)$,
\begin{equation*}
\begin{split}
\mathcal Q:[Met_{\delta}^{m,\alpha}\times C^{m,\alpha}_{\delta}\times& C^{m,\alpha}_{\delta} ](S)\rightarrow[Met_{\delta}^{m,\alpha}\times C^{m,\alpha}_{\delta}\times C^{m,\alpha}_{\delta} ](S)\\
&\mathcal Q(g_S,u,\phi)=(e^{2u}g_S,u,\phi),
\end{split}
\end{equation*}
It is easy to see, by elementary computation, that the operator $\mathcal P_S$ is exactly the composition of $\mathcal Q$ and $\mathcal P_2$ defined in $\S2$, i.e.
\begin{equation*}
\mathcal P_S=\mathcal P_2\circ \mathcal Q.
\end{equation*}
The operator $\mathcal P_2$ has already been proved to be elliptic and $\mathcal Q$ is obviously an isomorphism. As a consequence, the operator $\mathcal P_S$ is also elliptic. This gives the proof of Theorem 1.1.
\begin{flushright}
$\square$
\end{flushright}

{In the following section, we will apply the ellipticity of $\mathcal P_2$ to prove the manifold theorem for the moduli space $\mathcal E_C$ of stationary vacuum spacetimes.}
\section{Manifold Theorem}
Fix a triple $(\tilde g,\tilde u,\tilde\phi)$ which solves (II) on $S$ and satisfies the following conditions on the boundary $\partial S$
$$\begin{cases}
e^{-2u}g^T|_{\partial S}=\gamma\\
H-2\mathbf n(u)=e^{-u}\lambda\\
\mathbf n(\phi)=e^{2u}f.
\end{cases}$$
Here $\gamma,\lambda$ and $f$ are prescribed fields on $\partial S$. Given these information, we define the following Banach spaces.
\begin{definition}
\begin{equation*}
\begin{split}
\mathcal M_S&=\{~(g,u,\phi)\in [Met_{\delta}^{m,\alpha}\times C^{m,\alpha}_{\delta}\times C^{m,\alpha}_{\delta}](S):
\begin{cases}
\delta_{\tilde g}g=0,\\
e^{-2u}g^T|_{\partial S}=\gamma\\
H-2\mathbf n(u)=e^{-u}\lambda\\
\mathbf n(\phi)=e^{2u}f
\end{cases}\text{on }\partial S\};\\
\mathcal M_C&=\{~(g,u,\phi)\in [Met_{\delta}^{m,\alpha}\times C^{m,\alpha}_{\delta}\times C^{m,\alpha}_{\delta}](S):\quad\delta_{\tilde g}g=0 ~on~ \partial S~\}
 \end{split}
 \end{equation*}
\end{definition}
In the definition of $\mathcal M_S$, we replace the previous boundary condition $\mathbf n(\phi)=e^{-u}f$ in (2.6) by $\mathbf n(\phi)=e^{2u}f$, to ensure that the operator $D\hat\Phi$ below is formally self-adjoint on the tangent space $T\mathcal M_S$. This does not affect the elliptic property of the operator.

Define a map:
$$\Phi:~\mathcal M_C\rightarrow [(S_2)^{m-2,\alpha}_{\delta+2}\times C^{m-2,\alpha}_{\delta+2}\times C^{m-2,\alpha}_{\delta+2}](S)$$
$$\Phi(g,u,\phi)=(~\mathbf E-\delta^*\delta_{\tilde g}g,\quad\mathbf F,\quad\mathbf H~)$$
where the terms $\mathbf E,\mathbf F,\mathbf H$ are defined as in $(3.24)$.
Thus, the zero set $\Phi^{-1}(0)$ consists of stationary vacuum data $(g,u,\phi)$ satisfying $\delta_{\tilde g}g=0$ on $S$, i.e.
$\Phi^{-1}(0)=\mathbf{Z}_C,$ where $\mathbf Z_C$ is as in Definition 2.3. Obviously, $(\tilde g,\tilde u,\tilde\phi)\in\mathbf{Z}_C$. Based on the analysis in \S 2.3, to prove the moduli space $\mathcal E_C$ admits Banach manifold structure, it suffices to prove the zero set $\Phi^{-1}(0)$ is a smooth Banach manifold near $(\tilde g,\tilde u,\tilde \phi)$. The main step of this is the following theorem.
\begin{theorem}
At the point $(\tilde g,\tilde u,\tilde \phi)\in \Phi^{-1}(0)$, the linearization $D\Phi$ is surjective and its kernel splits in $T\mathcal M_C$.
\end{theorem}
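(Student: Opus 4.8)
The plan is to exhibit $D\Phi$ at $(\tilde g,\tilde u,\tilde\phi)$ as (up to lower order terms and a harmless rescaling) the interior operator $L_2$ coupled to the boundary operator $B_2$ arising in $\S2.4$, and then to deduce surjectivity and the splitting of the kernel from the ellipticity of the operator $P_2=(L_2,B_2)$ established in Theorem 3.3 together with its formal self-adjointness proved in Proposition 3.5. Concretely, the first step is to compute $D\Phi$ at $(\tilde g,\tilde u,\tilde\phi)$. Since $(\tilde g,\tilde u,\tilde\phi)$ solves (II), the leading term of the linearization of $\mathbf E - \delta^*\delta_{\tilde g}g$ is $-\tfrac12(D^*Dh - Z_2(h))$, i.e. $-\tfrac12$ times the first component of $L_2$ (this is exactly the observation already used in the proof of Proposition 3.5, that $\mathbf E'_h - \delta^*\delta h$ agrees with $-\tfrac12 L_2$ in its second-order part), while the linearizations of $\mathbf F$ and $\mathbf H$ are $8\Delta_g v$ and $8e^{-4u}(\Delta_g\sigma + 4\langle du,d\sigma\rangle)$ up to lower-order terms and constant factors. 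The domain $\mathcal M_C$ already incorporates the boundary condition $\delta_{\tilde g}g = 0$ on $\partial S$, which is precisely the gauge component $G'_h = \delta_g h = 0$ of $B_2$; and the remaining three boundary conditions cutting out $T\mathcal M_S \subset T\mathcal M_C$ (namely $h^T - 2vg^T = 0$, $H'_h - 2\mathbf n(v) = 0$, $\mathbf n(\sigma) = 0$) are the other three components of $B_2$. So $D\Phi$ restricted to the appropriate spaces is, modulo isomorphism and lower-order terms, the elliptic operator $P_2$.

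The second step is to invoke the general Fredholm theory for elliptic boundary value problems on asymptotically flat manifolds (the weighted Hölder / ADN framework already used in $\S2$–$\S3$, cf. [B2], [LP], [ADN]): an elliptic operator in this setting is Fredholm between the stated weighted Hölder spaces, so $D\Phi: T\mathcal M_C \to [(S_2)^{m-2,\alpha}_{\delta+2}\times C^{m-2,\alpha}_{\delta+2}\times C^{m-2,\alpha}_{\delta+2}](S)$ has finite-dimensional kernel, closed range, and finite-dimensional cokernel. Since the kernel is finite-dimensional it automatically splits (it is complemented by a closed subspace, e.g. the kernel of finitely many continuous functionals). What remains is surjectivity, i.e. vanishing of the cokernel.

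The third and crucial step is to identify the cokernel using self-adjointness. The cokernel of $D\Phi$ is represented by the kernel of the formal adjoint $D\Phi^*$ subject to the adjoint (natural) boundary conditions. Here is where Proposition 3.5 does the work: the operator $L_2$ restricted to $\mathcal M_2$ (equivalently $D\Phi$ on $T\mathcal M_S$) is formally self-adjoint, with the boundary conditions defining $\mathcal M_S$ being the self-adjoint ones — this is exactly the content of the second-variation computation, where all boundary terms were shown to cancel precisely because of the boundary conditions in $(3.23)$. Therefore an element of the cokernel is a solution $(k,w,\zeta) \in T\mathcal M_S$ (in the appropriate weighted space with decay rate $\delta$, $\tfrac12 < \delta < 1$) of the homogeneous system $L_2(k,w,\zeta) = 0$ satisfying the same boundary conditions, i.e. an element of $\ker(D\Phi)$ itself. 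One must then argue this kernel element actually lies in the right space to be paired — here one uses elliptic regularity and the fact that the pairing $\int_S \langle L_2 x, y\rangle$ has no boundary contribution on $T\mathcal M_S$, together with the decay of all integrands at the rate $r^{-2\delta - 2} < r^{-3}$ (the same decay bookkeeping used in Lemmas 2.2 and 2.4 to kill the boundary term at infinity). This shows $\operatorname{coker} D\Phi \cong \ker D\Phi$, but that is not yet surjectivity — rather, one concludes $D\Phi$ has Fredholm index $0$, and then surjectivity follows once one also controls the kernel.

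In fact the cleanest route, and the one I expect the paper takes, is slightly different: since $P_2$ is elliptic and formally self-adjoint, it is self-adjoint as an unbounded operator, so $\operatorname{coker} D\Phi$ is isomorphic to $\ker D\Phi$ acting on $T\mathcal M_S$; an element $(k,w,\zeta)$ of this kernel satisfies the interior equations $D^*Dk = Z_2(k)$, $\Delta_g w = 0$, $\Delta_g\zeta + 4\langle du,d\zeta\rangle = 0$ with the boundary conditions of $\mathcal M_S$ and decay $r^{-\delta}$; integrating by parts on the $w$- and $\zeta$-equations (Dirichlet-type control coming from $H'_k - 2\mathbf n(w) = 0$ interacting with the other conditions, and Neumann condition $\mathbf n(\zeta) = 0$) forces $w$ and $\zeta$ to vanish, reducing to a linear system in $k$ alone which, by the same Gauss-equation and divergence-gauge argument as in the proof of Lemma 3.4 / Lemma 2.4, has only the trivial solution in the weighted space. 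Hence $\operatorname{coker} D\Phi = 0$ and $D\Phi$ is surjective. The main obstacle is the careful treatment of this last vanishing argument: one must keep track of which boundary conditions (in $(3.23)$) enter which integration by parts, ensure the cross term $4\langle du,d\zeta\rangle$ does not obstruct the maximum-principle/energy argument for $\zeta$ (it can be absorbed since $du \in C^{m,\alpha}_\delta$ decays), and verify that the decay rate $\tfrac12 < \delta < 1$ is strong enough to discard all boundary terms at infinity while still allowing the Fredholm framework to apply — i.e. that $\delta$ avoids the exceptional (indicial) values for the relevant operators $D^*D - Z_2$, $\Delta_g$, and $\delta\delta^*$ on $\mathbb R^3 \setminus B^3$.
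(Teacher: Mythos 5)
Your step one (identifying $D\Phi$, up to lower order terms and scaling, with the operator $P_2=(L_2,B_2)$ and invoking Theorem 3.3 plus Proposition 3.5) matches the paper's starting point, but after that the argument has two genuine gaps. First, the Fredholm/splitting bookkeeping is applied to the wrong operator: $D\Phi$ on $T\mathcal M_C$ is \emph{not} an elliptic boundary value problem, since $\mathcal M_C$ imposes only the gauge condition $\delta_{\tilde g}g=0$ on $\partial S$ and none of the other three boundary conditions. The Fredholm property holds for the restriction $D\hat\Phi=D\Phi|_{T\mathcal M_S}$; the kernel of $D\Phi$ on $T\mathcal M_C$ is infinite dimensional (it is essentially the tangent space of the moduli space $\mathcal E_C$), so your claim that it is finite dimensional and therefore automatically complemented fails, and the splitting has to come from the standard argument the paper cites from [A1], using the Fredholmness of the restricted operator.

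Second, and more seriously, your surjectivity step identifies $\operatorname{coker}D\Phi$ with $K=\ker D\hat\Phi$ and then tries to prove $K=0$ by energy/maximum-principle arguments ``as in Lemma 3.4 / Lemma 2.4.'' This is not what the paper does and the sketched argument would not work: the linearized system is coupled (the boundary condition $H'_k-2\mathbf n(w)=0$ mixes $k$ and $w$, and the linearization of $\mathbf E$ involves $w,\zeta$), so there is no clean Dirichlet/Neumann integration by parts forcing $w=\zeta=0$; and Lemmas 3.4 and 2.4 prove an a priori estimate and the vanishing of the gauge field $G_2$, respectively, not triviality of the kernel of the linearized operator. Indeed $K$ is not expected to be trivial (the operator has index $0$, not trivial kernel), and the paper never claims it is. The actual mechanism is different: since $D\Phi$ is defined on the strictly larger space $T\mathcal M_C$, one assumes some nonzero $(k,w,\zeta)\in K$ is $L^2$-orthogonal to the whole image of $D\Phi$, and by testing against deformations of the form $(\delta^*X,L_X u,L_X\phi)$ and against deformations with freely prescribed normal data on $\partial S$, one derives the overdetermined system (4.2), i.e.\ vanishing Cauchy-type data for $(k,w,\zeta)$ on $\partial S$. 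The contradiction then comes from the unique continuation result, Proposition 4.3, whose proof occupies \S4.2 (H-harmonic coordinates, the analysis of the lapse-shift system (4.26)--(4.28), the unique continuation theorem of [AH], and a global argument showing the resulting field $Z$ with $k=\delta^*Z$ vanishes). This unique continuation ingredient is the heart of the paper's proof and is entirely absent from your proposal; without it, the cokernel cannot be shown to vanish.
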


\subsection{Proof of Theorem 4.2}$~~$

Surjectivity can be proved in a similar way as in $[A1]$ and $[AK]$. 
Let $D\hat \Phi$ be the restriction of $D\Phi$ to the subspace $T\mathcal M_S\subset T\mathcal M_C$, i.e. 
$$D\hat \Phi=D\Phi|_{T\mathcal M_S}.$$
Then the operator $D\hat \Phi$ is Fredholm by Theorem 3.3 and of index 0 because it is formally self-adjoint (cf.\S5.4). So $\text{Im}(D\hat\Phi)$ is closed in $ [(S_2)^{m-2,\alpha}_{\delta+2}\times C^{m-2,\alpha}_{\delta+2}\times C^{m-2,\alpha}_{\delta+2}](S)$ and its cokernel  is of the same dimension as the kernel $K=\text{Ker}D\hat{\Phi}$. If $K$ is trivial, then $D\hat\Phi$ is surjective, {and hence} so is $D\Phi$. 

Suppose $K$ is nontrivial. From the self-adjointness, it follows that for any element $(k,w,\zeta)\in K$ and $(h,u,\sigma)\in T\mathcal M_{S}$,
$$\int_S\langle D\hat\Phi(h,v,\sigma),(k,w,\zeta)\rangle=\int_S\langle (h,v,\sigma),D\hat\Phi(k,w,\zeta)\rangle=0.$$ 
Thus $\text{Im}(D\hat\Phi)=K^{\perp}$ with respect to the $L^2$ inner product. To prove surjectivity of $D\Phi$, it suffices to prove that for any triple $(k,w,\zeta)\in K$, there exists an element  $(h,v,\sigma)\in T\mathcal M_{C}$, such that $\int_{S}\langle D\Phi (h,v,\sigma),(k,w,\zeta)\rangle\neq 0$. 

Assume this is not true, i.e. there exists a nonzero element $(k,w,\zeta)\in K$ such that,
\begin{equation}
\int_{S}\langle D\Phi (h,v,\sigma),(k,w,\zeta)\rangle=0,\quad\forall (h,v,\sigma)\in T\mathcal M_{C}.
\end{equation} 
We claim that the triple $(k,w,\zeta)$ must satisfy the following system,
\begin{equation}
\begin{split}
&\begin{cases}
D\Phi(k,w,\zeta)=0,\\
\delta k=0,
\end{cases}~\text{on $S$}\\
&\begin{cases}
k^T=0,\\
(A'_k)^T=0,\\
w=\zeta=0,\\
\mathbf n(w)+\mathbf n'_k(\tilde u)=0,\\
\mathbf n(\zeta)+\mathbf n'_k(\tilde\phi)=0.
\end{cases} \text{on $\partial S$.}
\end{split}
\end{equation}
Proof of the claim:
First choose $(h,v,\sigma)=(\delta^*X,L_Xu,L_X\phi)$ in (4.1), for some vector field $X$ which vanishes on $\partial S$. Such a deformation corresponds to varying the data $(\tilde g,\tilde u,\tilde\phi)$ in the tangential direction of the action of $\mathcal D_0^{m+1,\alpha}(S)$. In this case, since the stationary vacuum field equations (II) are invariant under diffeomorphisms, it follows that
$$D\Phi(\delta^*X,L_Xu,L_X\phi)=(\delta^*Y,0,0)\quad\text{at}~(\tilde g,\tilde u,\tilde \phi),$$ 
where $Y=\delta\delta^*X$.
Note that Lemma 2.5 shows the operator $\delta\delta^*$ is surjective, so $Y\in T^{m-2,\alpha}_{\delta+2}(S)$ can be arbitrarily prescribed. Moreover, the fact $(h,v,\sigma)\in T\mathcal M_C$ implies that  $\delta h=0$ on $\partial S$, so that $Y=0$ on $\partial S$.  It follows from the equation $(4.1)$ that,
$$0=\int_{S}\langle\delta^* Y,k\rangle=\int_{S}\langle Y,\delta k\rangle+(\int_{\partial S}+\int_{\partial S_{\infty}})k(Y,\mathbf n)=\int_{S}\langle Y,\delta k\rangle.$$
Here the boundary integral over $\partial S$ vanish because $Y=0$ on $\partial S$ and the boundary integral at infinity $\partial S_{\infty}$ also vanish because of the decay behavior of $k$ and $Y$. On the right side of the equation above, $Y$ can be arbitrarily prescribed in the bulk, so we must have $\delta k=0$ on $S$, which yields the second equation in (4.2).

Next applying integration by parts to (4.1), we obtain
\begin{equation}
\int_S \langle D\Phi(k,w,\zeta), (h,v,\sigma)\rangle+\int_{\partial S}\tilde B[(h,v,\sigma),(k,w,\zeta)]=0.
\end{equation}
Since this holds for any $(h,v,\sigma)\in T\mathcal M_C$, it implies that
\begin{equation*}
D\Phi(k,w,\zeta)=0~\text{on}~S,
\end{equation*}
and 
\begin{equation}
\int_{\partial S}\tilde B[(h,v,\sigma),(k,w,\zeta)]=0,\quad\forall (h,v,\sigma)\in T\mathcal M_C.
\end{equation}
Here $\tilde B$ is an anti-symmetric bilinear form given by
\begin{equation*}
\tilde B [(h,v,\sigma),(k,w,\zeta)]=B[(h,v,\sigma),(k,w,\zeta)]-B[(k,w,\zeta),(h,v,\sigma)],
\end{equation*}
with
\begin{equation*}
\begin{split}
B[(h,v,\sigma),(k,w,\zeta)]&=-k(\delta h,\mathbf n)+\frac{1}{2}\{-\langle\nabla_{\mathbf n}h,k\rangle-k(\mathbf n,dtrh)+trk[\mathbf n(trh)+\delta h(\mathbf n)]\}\\
&\quad+[4\mathbf n(w)-4k(\mathbf n,d\tilde u)+2trk\mathbf n(\tilde u)]v\\
&\quad +4e^{-4\tilde u}\sigma [\mathbf n(\zeta)-k(\mathbf n,d\tilde \phi)+\frac{1}{2}trk\mathbf n(\tilde \phi)-4w \mathbf n(\tilde \phi)].
\end{split}
\end{equation*}
Again, in equation (4.3) the boundary integral at infinity $\int_{S_{\infty}}$, that obtained from the integration by parts, is zero because of the decay behavior of the boundary term $\tilde B$.

On the other hand, since $(k,w,\zeta)$ is in the kernel of $D\hat\Phi$, it must satisfy the following boundary conditions (obtained by linearizing the boundary conditions in $\mathcal M_S$),
\begin{equation}
\begin{cases}
\delta k=0\\
k^T-2w \tilde g^T=0\\
H'_k-2\mathbf n(w)-2\mathbf n'_k(\tilde u)+w(H-2\mathbf n(\tilde u))=0\\
\mathbf n(\zeta)+\mathbf n'_k(\tilde \phi)-2w \mathbf n(\tilde\phi)=0
\end{cases}
\quad\text{on}~\partial S.
\end{equation}
Since $h\in T\mathcal{M}_C$, we have $\delta h=0$ on $\partial S$. The same holds for $k$. So we can simplify the bilinear form $B$ by removing the divergence terms and obtain,
\begin{equation}
\begin{split}
B[(h,v,\sigma),(k,w,\zeta)]&=\frac{1}{2}\{-\langle\nabla_{\mathbf n}h,k\rangle-k(\mathbf n,dtrh)+trk\mathbf n(trh)\}\\
&\quad+[4\mathbf n(w)-4k(\mathbf n,d\tilde u)+2trk\mathbf n(\tilde u)]v\\
&\quad +4e^{-4\tilde u}\sigma [\mathbf n(\zeta)-k(\mathbf n,d\tilde \phi)+\frac{1}{2}trk\mathbf n(\tilde \phi)-4w \mathbf n(\tilde \phi)].
\end{split}
\end{equation}
Take a triple $(h,v,\sigma)$ such that $h=0,~\nabla_{\mathbf n} h=0$ and $\sigma=v=0$ on $\partial S$. Inserting it into equation $(4.4)$ yields
\begin{equation*}
\int_{\partial S} 4\mathbf n(v)w+4e^{-4u}\zeta \mathbf n(\sigma)=0.
\end{equation*}
The scalar fields $\mathbf n(v)$ and $\mathbf n(\sigma)$ can be arbitrary on $\partial S$. So this implies that
$ w=\zeta=0$ on $\partial S$, which is the third boundary equation in (4.2).

Consequently, based on the second equation in $(4.5)$, we also obtain the first boundary equation in (4.2), since
$k^T=w g^T=0$ on $\partial S$. Moreover, according to the last equation in $(4.5)$,
\begin{equation*}
\mathbf n(\zeta)+\mathbf n'_k(\tilde\phi)= 4w\mathbf n(\tilde\phi)=0\quad\text{on}~\partial S.
\end{equation*}
This is the last equation in (4.2).

Since $k^T=0$ on $\partial S$, the trace of $k$ on the boundary is $trk=k(\mathbf n,\mathbf n)$. Thus in the last line of equation $(4.6)$, the term $[\mathbf n(\zeta)-k(\mathbf n,d\tilde \phi)+\frac{1}{2}trk \mathbf n(\tilde \phi)-4w \mathbf n(\tilde\phi)]$ vanishes on $\partial S$ because of the following computation,
\begin{equation*}
\begin{split}
&\mathbf n(\zeta)-k(\mathbf n,d\tilde \phi)+\frac{1}{2}trk \mathbf n(\tilde \phi)-4w \mathbf n(\tilde\phi)\\
&=\mathbf n(\zeta)-k(\mathbf n,d\tilde \phi)+\frac{1}{2}k(\mathbf n,\mathbf n) \mathbf n(\tilde \phi)-4w \mathbf n(\tilde\phi)\\
&=\mathbf n(\zeta)+\mathbf n'_k(\tilde\phi)-4w \mathbf n(\tilde\phi)\\
&=0.
\end{split}
\end{equation*} 
Here the second equality is based on the variation formula of the unit normal vector $\mathbf n$, cf.\S5: $\mathbf n'_k=-k(\mathbf n)+\frac{1}{2}k(\mathbf n,\mathbf n)\mathbf n.$ 
In addition, we already prove $\zeta=0$ on the boundary. Therefore, the form $B$ can be simplified further by removing the last line in $(4.6)$ and becomes,
\begin{equation}
\begin{split}
B[(h,v,\sigma),(k,w,\zeta)]&=\frac{1}{2}\{-\langle\nabla_{\mathbf n}h,k\rangle-k(\mathbf n,dtrh)+trk\mathbf n(trh)\}\\
&\quad\quad+[4\mathbf n(w)-4k(\mathbf n,d\tilde u)+2trk\mathbf n(\tilde u)]v
\end{split}
\end{equation}

Choose a triple $(h,v,\sigma)$ so that  $h=0$ and $\nabla_{\mathbf n} h=0$ on $\partial S$ and plug it into equation $(4.4)$. It follows that,
\begin{equation*}
\int_{\partial S}[4\mathbf n(w)-4k(\mathbf n,d\tilde u)+2trk\mathbf n(\tilde u)]v=0,
\end{equation*}
Since the term $v$ can be arbitrarily prescribed on $\partial S$, one obtains 
\begin{equation}
4\mathbf n(w)-4k(\mathbf n,d\tilde u)+2trk\mathbf n(\tilde u)=0\quad\text{on}~\partial S.
\end{equation}
Since $-4k(\mathbf n,d\tilde u)+2trk\mathbf n(\tilde u)=4\langle-k(\mathbf n)+\frac{1}{2}k(\mathbf n,\mathbf n)\mathbf n, d\tilde u\rangle=4\mathbf n'_k(\tilde u)$, the equation above yields $\mathbf n(w)+\mathbf n'_k(\tilde u)=0$ on $\partial S$, which is fourth boundary equation in (4.2).
Combining this with the third equation in $(4.5)$, one obtains 
\begin{equation}
H'_k=0\quad\text{on}~\partial S.
\end{equation}
Based on equation $(4.8)$, we can simplify the form $B$ further into the following expression, 
\begin{equation}
\begin{split}
B[(h,v,\sigma),(k,w,\zeta)]&=\frac{1}{2}\{-\langle\nabla_{\mathbf n}h,k\rangle-k(\mathbf n,dtrh)+trk\mathbf n(trh)\}
\end{split}
\end{equation}
Consequently $(4.4)$ implies that the following equation holds for any $h\in T\mathcal M_C$,  
\begin{equation}
\begin{split}
\int_{\partial S}&\{-\langle\nabla_{\mathbf n}h,k\rangle-k(\mathbf n,dtrh)+trk\mathbf n(trh)\}\\
&-\{-\langle\nabla_{\mathbf n}k,h\rangle-h(\mathbf n,dtrk)+trh\mathbf n(trk)\}~=0.
\end{split}
\end{equation}
Equation (4.11) together with the boundary condition (4.9) and $k^T=0$ on $\partial S$ implies that, c.f.[AK], $(A'_k)^T=0$ on $S$. This completes the proof of our claim.
\begin{flushright}
$\Box$
\end{flushright}

The first equation in $(4.2)$ implies that the variation of $(\mathbf E-\delta^*\delta_{\tilde g}g, \mathbf F, \mathbf H)$ with respect to the deformation $(k,w,\zeta)$ vanishes, i.e.
 $$D(\mathbf E-\delta^*\delta_{\tilde g}g, \mathbf F, \mathbf H)_{(\tilde g,\tilde u,\tilde\phi)}(k,w,\zeta)=0,$$
 Together with the second equation in $(4.2)$, we observe that $(k,w,\zeta)$ is in fact a vacuum deformation, i.e. it makes the linearization of $(\mathbf E, \mathbf F, \mathbf H)$ at $(\tilde g,\tilde u,\tilde\phi)$ vanish:
\begin{equation}
D(\mathbf E, \mathbf F, \mathbf H)_{(\tilde g,\tilde u,\tilde\phi)}(k,w,\zeta)=0.
\end{equation}
Translate to the normal geodesic gauge
$$k\rightarrow \tilde k=k+\delta^*V,$$
where $V$ is a vector field which vanishes on the boundary and makes $\tilde k_{0i}=0$ on $\partial S$.
Then the boundary conditions in $(4.2)$ imply that the Cauchy data for $(k,w,\zeta)$ vanishes on $\partial S$. Implementing this idea, we can obtain the following unique continuation result. 
\begin{proposition} 
The trivial data $(k,w,\zeta)=0$ is the only solution to system $(4.2)$. 
\end{proposition}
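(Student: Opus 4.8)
The plan is to reduce the proposition to a unique continuation statement for the linearized stationary vacuum system, in the spirit of the static argument in $[\mathrm{AK}]$. By the discussion preceding the proposition, any solution $(k,w,\zeta)$ of $(4.2)$ satisfies $D(\mathbf E,\mathbf F,\mathbf H)_{(\tilde g,\tilde u,\tilde\phi)}(k,w,\zeta)=0$ on $S$ (equation $(4.13)$), together with $\delta k=0$ on $S$ and the six boundary identities in $(4.2)$. The goal is: after a diffeomorphism gauge change the \emph{entire} Cauchy data of the deformation vanishes along $\partial S$; then a unique continuation principle forces it to vanish near $\partial S$, hence, by connectedness of $S\setminus\partial S$, everywhere; and finally the divergence condition $\delta k=0$ disposes of the gauge vector field.

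First I would pass to a boundary geodesic gauge. Choose a vector field $V$ with $V=0$ on $\partial S$ such that the pure-gauge modification $\tilde k:=k+\delta^{*}V$, $\tilde w:=w+V(\tilde u)$, $\tilde\zeta:=\zeta+V(\tilde\phi)$ (the flow of $V$ applied to the data) brings $k$ into geodesic normal gauge, i.e. $\tilde k(\mathbf n,\cdot)=0$ along $\partial S$, and, integrating an ODE in a collar, in fact $\tilde k(\mathbf n,\cdot)\equiv 0$ near $\partial S$. Since $(\mathbf E,\mathbf F,\mathbf H)$ is diffeomorphism invariant, $(\tilde k,\tilde w,\tilde\zeta)$ still solves the linearized vacuum equations on $S$. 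The point of requiring $V=0$ on $\partial S$ is that the generated diffeomorphism fixes $\partial S$ pointwise, so the boundary quantities in $(4.2)$ are gauge invariant: a direct check gives $\tilde k^{T}=k^{T}$, $(A'_{\tilde k})^{T}=(A'_{k})^{T}$, $\tilde w=w$, $\tilde\zeta=\zeta$ on $\partial S$, and, using $\mathbf n'_{k}=-k(\mathbf n)+\tfrac12 k(\mathbf n,\mathbf n)\mathbf n$, also $\mathbf n(\tilde w)+\mathbf n'_{\tilde k}(\tilde u)=\mathbf n(w)+\mathbf n'_{k}(\tilde u)$ and $\mathbf n(\tilde\zeta)+\mathbf n'_{\tilde k}(\tilde\phi)=\mathbf n(\zeta)+\mathbf n'_{k}(\tilde\phi)$ on $\partial S$. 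Combining these with $(4.2)$: on $\partial S$ we have $\tilde k^{T}=0$, $(A'_{\tilde k})^{T}=0$, $\tilde w=\tilde\zeta=0$, and the mixed conditions vanish. In the geodesic gauge $\mathbf n'_{\tilde k}=0$, so $\mathbf n(\tilde w)=\mathbf n(\tilde\zeta)=0$ on $\partial S$; moreover $(A'_{\tilde k})^{T}$ reduces, modulo the vanishing $\tilde k^{T}$, to a nonzero multiple of $\nabla_{\mathbf n}\tilde k^{T}$, so $\nabla_{\mathbf n}\tilde k^{T}=0$ on $\partial S$, while $\tilde k(\mathbf n,\cdot)\equiv 0$ near $\partial S$ kills the normal derivatives of the remaining components. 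Hence the full Cauchy data $(\tilde k,\nabla_{\mathbf n}\tilde k,\tilde w,\mathbf n(\tilde w),\tilde\zeta,\mathbf n(\tilde\zeta))$ vanishes on $\partial S$.

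The heart of the proof, and the step I expect to be the main obstacle, is then the unique continuation: a vacuum deformation whose Cauchy data vanishes on $\partial S$ must vanish in a collar of $\partial S$, and hence on all of $S$ (the asymptotically flat end contributes nothing to the propagation). The difficulty is that, even in geodesic gauge, $D(\mathbf E,\mathbf F,\mathbf H)$ is \emph{not} elliptic — the diffeomorphism freedom transverse to $\partial S$ persists — so one cannot directly invoke an Aronszajn–Calder\'on type theorem. Instead one uses the linearized second Bianchi identity to extract a closed homogeneous system (a first-order transport equation for the gauge-degenerate components, coupled to an elliptic system) for which unique continuation does hold, exactly as in $[\mathrm{AK}]$. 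The scalar fields $\tilde w,\tilde\zeta$ obey scalar Laplace-type equations coupled to $\tilde k$ only through lower order terms and fit into the same scheme, with $\tilde\zeta$ handled verbatim like $\tilde w$; this is the only genuinely new feature compared with the static case, and what must be checked is that the coupling among $\tilde k,\tilde w,\tilde\zeta$ does not spoil the structure needed for the Bianchi reduction and the resulting estimate.

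Finally, from $(\tilde k,\tilde w,\tilde\zeta)\equiv 0$ on $S$ we recover $k=-\delta^{*}V$, $w=-V(\tilde u)$, $\zeta=-V(\tilde\phi)$ with $V$ vanishing on $\partial S$ (and, after cutting off outside the collar, lying in the weighted space of boundary-vanishing vector fields to which Lemma 2.5 applies). Applying $\delta$ to $k=-\delta^{*}V$ and using $\delta k=0$ on $S$ from $(4.2)$ gives $\delta\delta^{*}V=0$ on $S$ with $V=0$ on $\partial S$; by Lemma 2.5 the operator $\delta\delta^{*}$ is injective there, so $V=0$, and therefore $(k,w,\zeta)=0$. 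This proves the proposition.
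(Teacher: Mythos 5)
Your overall architecture (gauge so that the full Cauchy data vanishes on $\partial S$, invoke unique continuation, then dispose of the gauge vector field using $\delta k=0$) is the right one, and your gauge-invariance checks for the boundary quantities in $(4.2)$ under vector fields vanishing on $\partial S$ are correct. However, the step you yourself flag as ``the main obstacle'' is precisely the content of the paper's proof, and it is not supplied by your sketch. The paper does not work in geodesic normal gauge at all: it proves a local statement (Proposition 4.4) on a small cylinder $C$ with $\partial_0C\subset\partial S$ by passing to the H-harmonic coordinates of [AH], writing the metrics in lapse-shift form, showing that $(z',\psi')$ is determined by $(\gamma',u',\phi')$ through the elliptic boundary value problem (4.26)--(4.28), deducing that the Cauchy data of $(\gamma',u',\phi')$ vanishes on $\partial_0C$, and only then applying the unique continuation theorem of [AH] to the common principal operator $P$ of (4.38), (4.41)--(4.42). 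The reason for this detour is exactly the difficulty you name: the ungauged linearized operator is not elliptic, and the geodesic gauge is known not to produce a system with the structure needed for boundary unique continuation --- this is why [AH] (and, via it, [AK]) introduce H-harmonic coordinates in the first place. Asserting that a geodesic-gauge ``Bianchi reduction to a transport-plus-elliptic system'' works ``exactly as in [AK]'', and that the coupling to $w,\zeta$ ``must be checked'', leaves the heart of the proposition unproved; the conclusion of the local step should in any case be that $(k,w,\zeta)$ is pure gauge near $\partial S$, not that it vanishes there.

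The global part of your argument also has two genuine gaps. First, vanishing (or pure-gauge form) of the deformation on a collar does not propagate to all of $S$ ``by connectedness'': your geodesic gauge exists only near $\partial S$, and the ungauged linearized equations admit compactly supported pure-gauge solutions $(\delta^*X,L_X\tilde u,L_X\tilde\phi)$, so no interior unique continuation is available for them. The paper instead extends the collar vector field $Z$ to all of $S$ so that $k=\delta^*Z$ globally (citing [A1]), and only at the very end uses interior unique continuation for the gauged \emph{elliptic} system $D\Phi$. Second, your disposal of $V$ via the injectivity in Lemma 2.5 is not legitimate: Lemma 2.5 concerns $\mathcal X^{m,\alpha}_{\delta}(S)$, i.e.\ fields decaying like $r^{-\delta}$, whereas the vector field produced by this construction is only known to satisfy $\delta^*V\sim r^{-\delta}$, hence may grow as fast as $r^{2-\delta}$ (this is exactly the estimate of \S5.5). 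Cutting $V$ off outside the collar destroys the identity $\delta\delta^*V=0$ where the cutoff varies, so the equation you feed into Lemma 2.5 no longer holds on $S$. The paper circumvents this by a cutoff on the \emph{test} side: it solves $\delta\delta^*Y=fZ$ with $Y$ decaying (surjectivity from Lemma 2.5), integrates by parts using the $r^{2-\delta}$ growth bound to show $\int\langle fZ,Z\rangle=0$, and concludes $Z=0$ on a large ball, after which ellipticity of (4.2) gives global vanishing. Without these two ingredients your final paragraph does not close the argument.
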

We refer to $\S4.2$ for the proof. This is a contradiction with our assumption that $(k,w,\zeta)$ is nonzero. As a consequence, $D\Phi$ must be surjective. Then it is a standard fact that the kernel of $D\Phi$ splits, cf.$[A1]$. This completes the proof of Theorem 4.2.

\subsection{Proof of Proposition 4.3}$~~$

The proof below is a generalization of the unique continuation result of [AH] from Riemannian Einstein metrics to sationary Lorentzian Einstein metrics. We first recall some basic facts about H-harmonic coordinates. We refer to [AM],[AH] for more details.

Let $C=I\times B^2$, where $I=[0,1]$ and $B^2$ is the unit disk in $\mathbb R^2$. Given a general metric $g$ on $C$, one can always choose the H-harmonic coordinates $\{\tau,x^i\} (\tau\geq 0, i=1,2)$ for $(C,g)$ such that level set $\{\tau=0\}$ coincides with the horizontal boundary $\partial_0 C=\{0\}\times B^2$. Using such coordinates, we can write the metric as,
$$g=z d\tau^2+\gamma_{ij}(\psi^id\tau+dx^i)(\psi^jd\tau+dx^j).$$
Here $\gamma$ is the induced metric on the level sets of $\tau$ function, $z$ is called the lapse function and $\psi$ is the shift vector. In addition, by expressing the Ricci tensor $Ric_g$ in these coordinates, one can obtain the following equations on every surface of $B_{\tau_0}=\{\tau=\text{constant }\tau_0\}$ :
\begin{align}
\label{eqn1}
&(\partial^2_{\tau}+z^2\Delta-2\psi^k\partial_k\partial_{\tau}+\psi^k\psi^l\partial^2_{kl})\gamma_{ij}=-2z^2(Ric_g)_{ij}+Q_{ij}(\gamma,\partial \gamma),\\
\label{eqn2}
&\Delta z+|A_{\gamma}|^2z+z Ric_{g}(\mathbf N,\mathbf N)-\psi(H_{\gamma})=0,\\
\label{eqn3}
&\Delta\psi^i+2z\langle D^2x^i,A_{\gamma}\rangle+z\partial_iH_{\gamma}+2[(A_{\gamma})^i_j\nabla^jz-\frac{1}{2}H\nabla^iz]+2z Ric_{g}(\mathbf N)^{i}=0.
\end{align}
Here the Laplacian operator $\Delta$ and covariant derivative $\nabla$ are with respect to the induced metric $\gamma$ on the level surface. 
In equation $(4.13)$, $Q_{ij}(\gamma,\partial \gamma)$ is a term which involves at most first order derivatives of $(\gamma, z,\psi)$ in all directions and the 2nd order derivatives of $z$ and $\psi$ along the directions tangent to the level surfaces. 
In equations (4.14) and (4.15), $\mathbf N$ denotes the normal vector of the surface $\{\tau=\text{constant}\}\subset C$, which is equal to 
\begin{equation}
\mathbf N=\frac{1}{z}(\partial_{\tau}-\psi).
\end{equation}
The second fundamental form $A_{\gamma}$ is given by 
\begin{equation}
A_{\gamma}=\frac{1}{2}\mathcal L_{\mathbf N}\gamma.
\end{equation} 
In addition, on the vertical boundary $\partial C=I\times S^1$, we have the following conditions,
\begin{equation}
z|_{\partial C}\equiv1,\psi|_{\partial C}\equiv 0.
\end{equation}

Now on the manifold $(S,~\tilde g)$, take an embedded cylinder $C\cong I\times B^2$ in such a manner that the horizontal boundary  $\partial_0C$  is embedded in $\partial S$, and the vertical boundary $\partial C$ is located in the interior of $S$. Equip $C$ with the induced metric, still denoted as $\tilde g$. Without loss of generality, we can assume the cylinder $C$ is sufficiently small so that $\tilde g$ is $C^{m,\alpha}$ close to the standard flat metric on the cylinder. Then we have the following local result.
\begin{proposition}
Let data $(\tilde g,\tilde u,\tilde \phi)$ be a stationary vacuum solution, i.e. $\Phi(\tilde g,\tilde u,\tilde \phi)=0$ in $C$. If $(k,w,\zeta)$ is an infinitesimal deformation of $(\tilde g,\tilde u,\tilde \phi)$ such that it solves the boundary value problem $(4.2)$ on $C$ in the sense that,
\begin{equation}
\begin{split}
\begin{cases}
D\Phi(k,w,\zeta)=0\\
\delta k=0
\end{cases}\text{on } C,~\quad
\begin{cases}
k^T=0\\
(A'_k)^T=0\\
w=\zeta=0\\
\mathbf n(w)+\mathbf n'_k(\tilde u)=0\\
\mathbf n(\zeta)+\mathbf n'_k(\tilde\phi)=0
\end{cases} \text{on } \partial_0 C,
\end{split}
\end{equation}
then there exists a vector field $X$ with $X=0$ on $\partial_0C$, such that 
$$k=\delta^*X,~w=L_X\tilde u,~\text{and}~\zeta=L_X\tilde\phi.$$
\end{proposition}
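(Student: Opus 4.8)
The strategy is to reduce Proposition~4.4 to a boundary unique continuation statement for an elliptic system written in H-harmonic coordinates, following and extending [AH], [AM]. Since $\tilde g$ is Riemannian on $S$, the system (4.13)--(4.15) for $(\gamma,z,\psi)$, together with the linearizations of the second and third equations of (II) --- which are honest scalar second-order equations of the form $\Delta_{\tilde g}w+(\text{l.o.t.})=0$ and $\Delta_{\tilde g}\zeta+(\text{l.o.t.})=0$ --- forms a determined system whose principal part is block-diagonal and scalar of Laplace type, with all coupling between components entering at first order or lower. For such systems the Carleman estimate / unique continuation theorem of [AH] applies and yields unique continuation across the level set $\{\tau=0\}=\partial_0C$. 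Thus the content of the proof is twofold: (i) put $(k,w,\zeta)$ into the linearized H-harmonic gauge by a vector field $X$ that vanishes on $\partial_0C$, and (ii) check that the boundary conditions on $\partial_0C$ in (4.19) force the full Cauchy data of the gauged deformation to vanish there; the conclusion $k=\delta^*X$, $w=L_X\tilde u$, $\zeta=L_X\tilde\phi$ is then immediate.

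First I would fix H-harmonic coordinates $\{\tau,x^i\}$ for $(C,\tilde g)$ with $\{\tau=0\}=\partial_0C$ and $z|_{\partial C}\equiv1$, $\psi|_{\partial C}\equiv0$, so that $\tilde g=\tilde z\,d\tau^2+\tilde\gamma_{ij}(\tilde\psi^id\tau+dx^i)(\tilde\psi^jd\tau+dx^j)$ and equations (4.13)--(4.15) hold with $Ric_g$ replaced, via system~(I), by the lower-order expression $2du\otimes du+2e^{-4u}d\phi\otimes d\phi$. Since $(k,w,\zeta)$ solves $D\Phi(k,w,\zeta)=0$ together with $\delta k=0$, and the linearized gauge term $\delta^*\delta k$ vanishes because $\delta k=0$, the triple $(k,w,\zeta)$ is a genuine linearized vacuum deformation, i.e.\ $D(\mathbf E,\mathbf F,\mathbf H)(k,w,\zeta)=0$; this is the diffeomorphism-invariant system to which the gauge freedom applies.

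Next I would construct the gauge vector field $X$: solving the elliptic equation expressing that $\hat k:=k-\delta^*X$ lies in the linearized H-harmonic gauge, subject to the Dirichlet condition $X=0$ on $\partial_0C$, one obtains $X$ such that the deformations of $(\tilde\gamma,\tilde z,\tilde\psi)$ induced by $\hat k$, together with $\hat w:=w-L_X\tilde u$ and $\hat\zeta:=\zeta-L_X\tilde\phi$, satisfy the linearized H-harmonic system and the linearized scalar equations. It then remains to translate the boundary data in (4.19). Because $X=0$ on $\partial_0C$, the gauge contributions $\delta^*X$, $L_X\tilde u$, $L_X\tilde\phi$ and the variations of $\mathbf n$ they produce enter only through the normal derivative of $X$ along $\partial_0C$, and one checks, using $\mathbf n'_k=-k(\mathbf n)+\frac12k(\mathbf n,\mathbf n)\mathbf n$, that $k^T=0$ and $(A'_k)^T=0$ give the vanishing of the $\gamma$-perturbation and of its $\tau$-derivative on $\partial_0C$; the normalization built into the H-harmonic gauge (equivalently, adjusting $X$ so that $\hat k_{0\alpha}=0$ and $\partial_\tau\hat k_{0\alpha}=0$ along $\partial_0C$) kills the remaining metric components and their $\tau$-derivatives; and the conditions $w=0$, $\mathbf n(w)+\mathbf n'_k(\tilde u)=0$, $\zeta=0$, $\mathbf n(\zeta)+\mathbf n'_k(\tilde\phi)=0$ yield $\hat w=\partial_\tau\hat w=0$ and $\hat\zeta=\partial_\tau\hat\zeta=0$ on $\{\tau=0\}$. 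Thus the full Cauchy data of $(\hat k,\hat w,\hat\zeta)$ vanishes there.

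Finally, applying the unique continuation theorem of [AH] (see also [AM]) to this coupled system, the vanishing Cauchy data forces $(\hat k,\hat w,\hat\zeta)\equiv0$ on $C$, that is $k=\delta^*X$, $w=L_X\tilde u$, $\zeta=L_X\tilde\phi$ with $X=0$ on $\partial_0C$. The main obstacle I anticipate is step~(ii): a careful accounting of how the gauge vector $X$ enters the boundary variation of the unit normal and of the induced metric, so that the geometrically phrased conditions in (4.19) become exactly the vanishing of zeroth- and first-order Cauchy data. A subsidiary point, not present in [AH], is verifying that the scalar equations for $\tilde u$ and $\tilde\phi$ --- and in particular their coupling to $k$ through terms such as $\langle du,d\phi\rangle$ and $e^{-4u}d\phi\otimes d\phi$ --- do not disturb the structural hypotheses of the Carleman estimate; this holds precisely because those equations are scalar of Laplace type and the couplings are of order at most one.
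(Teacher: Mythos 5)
Your overall skeleton agrees with the paper's proof: pass to a common H-harmonic coordinate gauge via a diffeomorphism (vector field $X$) fixing $\partial_0C$, show the gauged deformation has vanishing Cauchy data on $\partial_0C$, and invoke the unique continuation result of [AH]. However, there is a genuine gap in how you treat the lapse/shift perturbations $(z',\psi')$ (equivalently the components $\hat k_{0\alpha}$). You assert that ``the normalization built into the H-harmonic gauge'' lets you arrange $\hat k_{0\alpha}=0$ and $\partial_\tau\hat k_{0\alpha}=0$ along $\partial_0C$. In the H-harmonic gauge this is not a free normalization: $z'$ and $\psi'$ are \emph{determined} by the linearizations of the slice-wise elliptic equations (4.14)--(4.15) with Dirichlet data on the vertical boundary, and their vanishing on $\partial_0C$ must be \emph{derived}, not imposed. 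In the paper this is the content of (4.26)--(4.33): one uses the linearized field equations to replace $Ric'$ by matter terms as in (4.29), and then the boundary conditions $\mathbf n(w)+\mathbf n'_k(\tilde u)=0$, $\mathbf n(\zeta)+\mathbf n'_k(\tilde\phi)=0$ (together with $\gamma'=0$, $(A'_k)^T=0$) to show the source terms of the linearized lapse/shift system vanish on $\partial_0C$, whence $(z',\psi')=0$ there by uniqueness for that elliptic BVP. Only after this does one get $\partial_\tau\gamma'=0$ (your claim that $k^T=0$ and $(A'_k)^T=0$ alone give it is not correct, since $A'_\gamma$ contains $z'$, $\psi'$ contributions) and $\partial_\tau u'=\partial_\tau\phi'=0$ (since concluding $\mathbf n(u')=0$ from $\mathbf n(w)+\mathbf n'_k(\tilde u)=0$ requires knowing the variation of the normal vanishes, i.e.\ again $z'=\psi'=0$ on $\partial_0C$). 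If instead you meant to impose $\hat k_{0\alpha}=0$ by a geodesic-type gauge, you would lose the H-harmonic structure on which [AH] relies, and you cannot impose both gauges at once.

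A second, related inaccuracy is your description of the interior system as block-diagonal of Laplace type ``with all coupling between components entering at first order or lower.'' The linearized equation for $\gamma'$ contains second-order tangential derivatives of $z'$ and $\psi'$ (through the term $Q$ in (4.13)), and (4.14)--(4.15) are elliptic only on the level surfaces, so the naive coupled system does not satisfy the structural hypotheses of the Carleman estimate. The missing step is the elimination argument: one must show, via the slice-wise elliptic BVP (and Lemma 3.6 of [AH]), that $(z',\psi')$ and the derivatives of them appearing on the right-hand side are controlled by at most first-order derivatives of $(\gamma',u',\phi')$, so that the final system (4.38), (4.41)--(4.42) is a system in $(\gamma',u',\phi')$ alone with principal operator $P$ and admissible lower-order terms; only then does the unique continuation theorem of [AH] apply. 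Your observation that the scalar equations for $u$, $\phi$ couple only at first order is correct and is the easy part; the lapse/shift elimination is the part your proposal leaves out.
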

\begin{proof}
Define a Banach space $\mathcal M^*$ as follows, 
\begin{equation}
\resizebox{.9\textwidth}{!}
{$
\begin{split}
\mathcal{M}^*=\{&(g,u,\phi)\in [Met_{\delta}^{m,\alpha}\times C^{m,\alpha}_{\delta}\times C^{m,\alpha}_{\delta}](C):\delta_{\tilde g}g=0~\text{on}~\partial C,\\
&\big(g^{T},A_g,u,\phi,\mathbf n_g(u),\mathbf n_g(\phi)\big)=\big(\tilde g^{T},A_{\tilde g},\tilde u,\tilde \phi,\mathbf n_{\tilde g}(\tilde u),\mathbf n_{\tilde g}(\tilde \phi)\big)\text{ on }\partial_0C\}.
\end{split}
$}
\end{equation}
Obviously, $(\tilde g,\tilde u,\tilde\phi)\in\mathcal{M}^*$. By the hypothesis, the deformation $(k,w,\zeta)$ is tangent to the space $\mathcal M^*$, i.e. $(k,w,\zeta)\in T\mathcal{M}^*$.  Thus we can assume $(k,w,\zeta)$ is the infinitesimal deformation of a smooth curve $(g_t,u_t,\phi_t)$ at $t=0$, where $(g_t,u_t,\phi_t)\in\mathcal{M}^*$ for $t\in (-\epsilon,\epsilon)$, with some $\epsilon>0$, and $(g_0,u_0,\phi_0)=(\tilde g,\tilde u,\tilde\phi)$.

According to [AH], there exists a {smooth} curve of $C^{m+1,\alpha}$ diffeomorphisms $\Psi_t$ of $C$, which equal to $Id_{\partial_0C}$ on $\partial_0C$ for all $t\in(-\epsilon,\epsilon)$ and $\Psi_0=Id$ in $C$, so that $\Psi_t^*(g_t)$ share the same H-harmonic coordinates. We denote the infinitesimal variation of the new curve $(\Psi^*_t(g_t),\Psi^*_t(u_t),\Psi^*_t(\phi_t))$ at $t=0$ as $(g',u',\phi')$. It is given by
$$(g',u',\phi')=(k+\delta_{\tilde g}^*X,w+L_X\tilde u,\zeta+L_X\tilde \phi),$$
for some vector field $X$, with $X=0$ on $\partial_0C$. Therefore, to prove the proposition, it suffices to prove that $g'=u'=\phi'=0$.

Notice that since the diffeomorphism $\Psi^*_t$ is equal to $Id_{\partial_0C}$, it preserve the boundary conditions in (4.20). Thus the new curve $(\Psi^*_t(g_t),\Psi^*_t(u_t),\Psi^*_t(\phi_t))$ still belongs to the space $\mathcal M^*$ and consequently, the infinitesimal deformation $(g',u',\phi')$ still satisfies the boundary conditions listed in (4.19).

For simplicity of notation, the normalized curve $(\Psi^*_t(g_t),\Psi^*_t(u_t),\Psi^*_t(\phi_t))$ will still be denoted as $(g_t,u_t,\phi_t)$ in the following {argument}. Since  the infinitesimal deformation $(g',u',\phi')$ is the sum of a vacuum deformation $(k,w,\zeta)$, cf.(4.12), and a diffeomorphism deformation $\frac{d}{dt}\Psi_t^*$, it must preserve the stationary vacuum property, i.e.
$$\frac{d}{dt}|_{t=0}(\mathbf E,\mathbf F,\mathbf H)[(g_t,u_t,\phi_t)]=0\quad\text{in}\quad C.$$
This furthermore implies that,
\begin{align}
\label{eqn1}
s'_{g_t}=(2|du_t|^2+2e^{-4u_t}|d\phi_t|^2)',\\
\label{eqn2}
Ric'_{g_t}=(2du_t\otimes du_t+2e^{-4u_t}d\phi_t\otimes d\phi_t)',\\
\label{eqn3}
(\Delta u_t-2e^{-4u_t}|d\phi_t|)'=0,\\
\label{eqn4}
(\Delta\phi_t+4\langle du_t,d\phi_t\rangle)'=0,
\end{align}
where the prime superscript $'$ means $\frac{d}{dt}|_{t=0}$.

Let $\{\tau,x^i\}(i=1,2)$ denote the common H-harmonic coordinates for $g_t$, with the lapse function denoted as $z_t$ and the shift vector $\psi_t$. Thus the metric $g_t$ is in the form, 
$$g_t=z_td\tau^2+\gamma_t(\psi_t^id\tau+dx^i)(\psi_t^jd\tau+dx^j).$$
Write $(\gamma',z',\psi',u',\phi')$ as the infinitesimal variation of the curve $(\gamma_t,z_t,\psi_t,u_t,\phi_t)$ at $t=0$, then by the boundary conditions in $(4.19)$, we obtain 
\begin{equation}
\begin{cases}
\gamma'=0\\
(A'_{g'})^T=0\\
u'=0\\
\phi'=0\\
\mathbf n(u')+\mathbf n'(u_0)=0\\
\mathbf n(\phi')+\mathbf n'(\phi_0)=0,
\end{cases}
\text{ on }\partial_0C.
\end{equation}
In the above we use $(A'_{g'})$ to denote the variation of the second fundamental form $A$ at $g_0$ with respect to the infinitesimal deformation $g'$.

Moreover, equations (4.13-15) hold for all $(\gamma_t,z_t,\psi_t,u_t,\phi_t),~t\in(-\epsilon,\epsilon)$.
Linearization of the equation (4.14) at $t=0$ gives
\begin{equation}
\begin{split}
&\Delta_{\gamma_0} z'+|A_{\gamma_0}|^2z'+z'Ric_{g_0}(\mathbf N_0,\mathbf N_0)-\psi'(H_{\gamma_0})\\
=&-\Delta_{\gamma_t}' z_0-(|A_t|^2)'z_0-z_0[Ric_{g_t}(\mathbf N_t,\mathbf N_t)]'+\psi_0(H_t').
\end{split}
\end{equation}
Linearization of equation (4.15) at $t=0$ gives,
\begin{equation}
\resizebox{.99\textwidth}{!}
{$
\begin{split}
&\Delta_{\gamma_0} (\psi')^i+2z'\langle D^2_{\gamma_0}x^i,A_{\gamma_0}\rangle+z'\partial_iH_{\gamma_0}+2(A_{\gamma_0})^i_j\nabla_{\gamma_0}^jz'-H_{\gamma_0}\nabla_{\gamma_0}^iz'+2z'Ric_{g_0}(\mathbf N_0)^i\\
=&-(\Delta_{\gamma_t})'\psi_0^i-2z_0\langle D^2_{\gamma_t}x^i,A_{\gamma_t}\rangle'-z_0\partial_iH_{\gamma_t}'-\big(2(A_{\gamma_t})^i_j\nabla_{\gamma_t}^j\big)'z_0+\big(H_{\gamma_t}\nabla_{\gamma_t}^i\big)'z_0-2z_0(Ric_{g_t}(\mathbf N_t)^i)'.
\end{split}
$}
\end{equation}
The system (4.26)-(4.27) is a coupled elliptic system in the pair $(z',\psi')$ on every surface $B_{\tau}$ of constant $\tau$. Moreover, since (4.18) holds for all $(z_t,\psi_t)$, we have the following boundary conditions 
\begin{equation}
z'|_{\partial B_{\tau}}=0,~\psi'|_{\partial B_{\tau}}=0.
\end{equation}
Now since $g_{0}$ is assumed to be $C^{m,\alpha}$ close to the flat cylinder metric, there is a unique solution to the elliptic boundary value problem (4.26-28). Notice that based on equation (4.22), the linearized Ricci tensor $(Ric_{g_t})'$ that appears in (4.26) and (4.27) can be converted as,
\begin{equation}
\begin{split}
[Ric_{g_t}(\mathbf N_t,\mathbf N_t)]'&=Ric_{g_t}'(\mathbf N_0,\mathbf N_0)+2Ric_{g_0}(\mathbf N_t',\mathbf N_0)\\
&=(2du_t\otimes du_t+2e^{-4u_t}d\phi_t\otimes d\phi_t)'(\mathbf N_0,\mathbf N_0)\\
&\quad+2(2du_0\otimes du_0+2e^{-4u}d\phi_0\otimes d\phi_0)(\mathbf N_t',\mathbf N_0)\\
&=[2\big(\mathbf N_t(u_t)\big)^2+2e^{-4u_t}\big(\mathbf N_t(\phi_t)\big)^2]'.
\end{split}
\end{equation}
Combining the facts above, we conclude that the solution $(z',\psi')$ to (4.26-28) is uniquely determined by $(\gamma',u',\phi')$ on $B_{\tau}$ and their time derivatives $(\partial_{\tau}\gamma',\partial_{\tau} u',\partial_{\tau}\phi')$. Similar argument as in [AH] (Lemma 3.6) shows this also holds for the time derivatives $(\partial_{\tau}z',\partial_{\tau}\psi')$.

In particular, on the boundary surface $B_0=\partial_0C$, the terms $\Delta',(|A_t|^2)',$ and $H_t'$ in the second line of (4.26) all vanish because they only involve the tangential variation of $\gamma$ and $A$ which are zero on $\partial_0C$ according to $(4.25)$. Moreover we have $\mathbf N_t=-\mathbf n_{g_t}$ on the horizontal boundary $\partial_0 C$. So in the last line of (4.29), $[\mathbf N_t(u_t)]'=-\mathbf n (u')-\mathbf n'(u_0)$ on $\partial_0 C$ and it is zero by (4.25). The same holds for $[\mathbf N_t(\phi_t)]'$. Thus $[Ric_{g_t}(\mathbf N_t,\mathbf N_t)]'=0$ on $\partial_0 C$, and we can reduce equation (4.26) to the following one on the horizontal boundary $\partial_0 C$:
\begin{equation}
\Delta_{\gamma_0} z'+|A_{\gamma_0}|^2z'+z'Ric_{g_0}(\mathbf n,\mathbf n)-\psi'(H_{\gamma_0})=0.
\end{equation}
For the same reason, on $\partial_0C$ equation (4.27) can be simplified as,
\begin{equation}
\Delta_{\gamma_0}(\psi')^i+2z'\langle D_{\gamma_0}^2x^i,A_{\gamma_0}\rangle+z'\partial_iH_{\gamma_0}+2[(A_{\gamma_0})^i_j\nabla^jz'-\frac{1}{2}H_{\gamma_0}\nabla^iz']-2z'Ric_{g_0}(\mathbf n)^{i}=0.
\end{equation}
On the boundary of the surface $\partial_0C$, it follows from (4.28) that 
\begin{equation}
z'|_{\partial (\partial_0C)}=0,\psi'|_{\partial (\partial_0C)}= 0.
\end{equation}
As mentioned above, since $g_{0}$ is assumed to be $C^{m,\alpha}$ close to the flat metric, the boundary value problem (4.30-32) on $\partial_0C$ has a unique solution. One easily observe that it must be the trivial solution,
\begin{equation}
(z',\psi')=0\quad\text{on}~\partial_0C.
\end{equation}
Based on $(4.16)$ and $(4.17)$, we have 
\begin{equation*}
\begin{split}
A'_{\gamma}&=\frac{d}{dt}|_{t=0}[\frac{1}{2}\mathcal L_{\frac{1}{z_t}(\partial_\tau-\psi_t)}\gamma_t]\\
&=\frac{1}{2}\mathcal L_{\frac{1}{z_0}(\partial_\tau-\psi_0)}\gamma'+\frac{1}{2}\mathcal L_{\frac{1}{z_t}(-\psi')}\gamma_0+\frac{1}{2}\mathcal L_{-\frac{1}{z_0^2}z'(\partial_\tau-\psi_0)}\gamma_t.
\end{split}
\end{equation*}
From (4.25), we have $\gamma'=0$ and $(A'_{\gamma})^T=0$ on $\partial_0 C$. From (4.33), $z'=0, \psi'=0$ on $\partial_0 C$. Plugging these into the equation above, we can obtain,
\begin{equation}
\partial _{\tau}\gamma'_{ij}=0\quad\text{on}~\partial_0C.
\end{equation}
Since the unit normal vector $\mathbf N_t$ of $\partial_0 C$ as the slice $\{\tau=0\}$ and the geometric (outward) normal vector $\mathbf n_{g_t}$ of $\partial_0 C\subset (C, g_t)$ are related by $\mathbf N_t=-\mathbf n_{g_t}$ on $\partial_0C$, it follows that
$$\mathbf n'(u_0)=-[\frac{1}{z_t}(\partial_\tau-\psi_t)]'(u_0)=-[-\frac{1}{z_0^2}z'(\partial_\tau-\psi_0)+\frac{1}{z_0}(-\psi')](u_0)=0\quad\text{on}~\partial_0C.$$
Insert this to the fifth equation in (4.25), we obtain $\mathbf n(u')=0$ which further implies,
\begin{equation}
\partial_{\tau}u'=0\quad\text{on}~\partial_0C.
\end{equation}
Similarly, one can derive that,
\begin{equation}
\partial_{\tau}\phi'=0\quad\text{on}~\partial_0C.
\end{equation}
By the conditions in $(4.25)$ and $(4.34-36)$, the triple $(\gamma',u',\phi')$ has trivial Cauchy data on the boundary $\partial_0C$. In the interior of $C$, linearization of the equation $(4.13)$ shows
\begin{equation}
(\partial_{\tau}^2+z_0^2\gamma_0^{kl}\partial^2_{kl}-2\psi_0^k\partial_k\partial_{\tau}+\psi_0^k\psi_0^l\partial^2_{kl})\gamma'_{ij}=O(\gamma',z',\psi',u',\phi'),
\end{equation}
where $O(\gamma',w',\sigma',u',\phi')$ only depends on the tangential derivatives (at most 2nd order ) of $z',\psi'$, time derivative (at most 1st order) of $z',\psi'$ and derivatives (at most 1st order) of $\gamma',u',\phi'$. In addition, the analysis about the boundary value problem (4.26-28) shows that $(z',\psi')$ is uniquely determined (up to 2nd order tangential derivative and 1st order time derivative) by derivatives (at most 1st order) of $(\gamma',u',\phi')$. Thus equation (4.37) can be rewritten as, 
\begin{equation}
(\partial_{\tau}^2+z_0^2\gamma_0^{kl}\partial^2_{kl}-2\psi_0^k\partial_k\partial_{\tau}+\psi_0^k\psi_0^l\partial^2_{kl})\gamma'_{ij}=O(\gamma',u',\phi'),
\end{equation}
Similarly, equations $(4.23)$ and $(4.24)$ gives:
\begin{align}
\label{eqn1}
g^{\alpha\beta}\partial^2_{\alpha\beta} u'=O(\gamma',u',\phi'),\\
\label{eqn2}
g^{\alpha\beta}\partial^2_{\alpha\beta} \phi'=O(\gamma',u',\phi').
\end{align}
which are equivalent to the following equations,
\begin{align}
\label{eqn1}
[\partial^2_{\tau}-2\psi_0^{i}\partial_{i}\partial_{\tau}+(z_0^2\gamma_0^{ij}+\psi_0^i\psi_0^j)\partial^2_{ij}] u'=O(\gamma',u',\phi'),\\
\label{eqn2}
 [\partial^2_{\tau}-2\psi_0^{i}\partial_i\partial_{\tau}+(z_0^2\gamma_0^{ij}+\psi_0^i\psi_0^j)\partial^2_{ij}] \phi'=O(\gamma',u',\phi'),
\end{align}
since $g^{00}=z_0^{-2},g^{0i}=-z_0^{-2}\psi_0^{i}$, and $g^{ij}=\gamma_0^{ij}+z_0^{-2}\psi_0^i\psi_0^{j}$, where $i,j=1,2$ denote directions tangent to level surfaces $B_{\tau}$, and the superscript index $0$ denotes the $\partial_{\tau}$ direction.

Observe that equations $(4.38)$ and $(4.41-42)$ have the same principal operator. We denote it as $P$,
$$P=[\partial^2_{\tau}-2\psi_0^{i}\partial^{2}_{0i}+(w^2\gamma^{ij}+\psi_0^i\psi_0^j)\partial^2_{ij}].$$ 
This is the same as the operator discussed in [AH]. It is shown (cf.[AH] proof of Theorem 3.1 and 1.1) equations (4.38)-(4.41-41) together with vanishing Cauchy boundary conditions (4.25)-(4.34-36) have only the zero solution $(\gamma',u',\phi')=0$. This further implies that $(z',\psi')=0$ on $C$ and thus $(g',u',\phi')=0$. This completes the proof.

\end{proof}
Proposition 4.4 implies that there exists a vector field $Z$, which is zero on $\partial S$, such that $k=\delta_{\tilde g}^*Z,w=L_{Z}\tilde u,\zeta=L_{Z}\tilde \phi$ in a neighborhood $U$ of $\partial S$. Since in our case $S\cong\mathbb R^3-B$, $Z$ can be uniquely extended to a vector field on $S$ so that $k=\delta^*Z$ holds globally (cf. [A1]). 

Next we show that $Z$ must be zero. From the second equation in $(4.2)$, it follows that, 
$$\delta\delta^*Z=\delta k=0.$$ 
For a fixed $R>1$, let $B_R\subset S$ denote the closed ball of radius $R$ and $A_{\epsilon}$ denote the annulus between $B_{R-\epsilon}$ and $B_{R}$. Take a cutoff function $f\in C^{m+1,\alpha}(S)$ such that $f|_{B_{R-\epsilon}}\equiv 1$ and $f|_{S\setminus B_{R}}\equiv 0$. Let $W$ be the compactly supported vector field $W=fZ$. 
Since $Z$ is bounded in $B_{R}$, we can take $\epsilon$ small enough such that, 

\begin{equation}
\int_S\langle W,Z\rangle=\int_{B_{R-\epsilon}}|Z,Z|^2+\int_{A_{\epsilon}}\langle fZ,Z\rangle\geq\frac{1}{2}\int_{B_{R/2}}|Z|^2
\end{equation}
According to Lemma2.5, the map $\delta\delta^*$ is surjective, therefore there exist a vector field $Y\in T^{m,\alpha}_{\delta+4}$ that vanishes on $\partial S$ and makes
$$\delta\delta^*Y=W\text{ on }S.$$  
Notice that $\delta^*Z$ has the decay rate as $k~(\sim r^{-\delta})$. From this one can derive that  $Z$ can blow up no faster than $r^{2-\delta}$ (cf. \S 5.5). Therefore, applying integration by parts to the left side of equation (4.43), one can obtain
\begin{equation}
\begin{split}
&\int_S\langle W,Z\rangle=\int_S\langle \delta\delta^*Y,Z\rangle\\
=&\int_{S}\langle Y,\delta\delta^*Z\rangle+\big(\int_{\partial S}+\int_{\partial S_{\infty}}\big)[\delta^*Z(\mathbf n,Y)-\delta^*Y(\mathbf n, Z)]\\
=&0.
\end{split}
\end{equation} 
In the second line above, the boundary integral over $\partial S$ is zero because $Y=Z=0$ on $\partial S$ and the asymptotical behavior of $Y$ and $Z$ makes the boundary integral at infinity vanish. Combining equations $(4.43)$ and $(4.44)$, it is easy to derive that $Z=0$ in $B_{R/2}$, thus $k$, $w$, and $\zeta$ vanish in $B_{R/2}$ , which further implies that they are vanishing globally because of ellipticity of the system (4.2). This finishes the proof of Proposition 4.3. 
\begin{flushright}
$\Box$
\end{flushright}
In conclusion, we obtain the following result:
\begin{theorem}
The moduli space $\mathcal{E}_C$ is an infinite dimensional $C^{\infty}$ Banach manifold, with tangent space
$$T_{[(\tilde g,\tilde u,\tilde\phi)]}\mathcal{E}_C\cong Ker(D\Phi_{(\tilde g,\tilde u,\tilde\phi)}).$$ 
\end{theorem}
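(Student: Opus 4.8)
The plan is to deduce the statement from Theorem 4.2 together with the local identification, established in \S2.3, of the moduli space $\mathcal E_C$ with the zero set $\mathbf Z_C=\Phi^{-1}(0)$. First I would record that $\Phi:\mathcal M_C\to[(S_2)^{m-2,\alpha}_{\delta+2}\times C^{m-2,\alpha}_{\delta+2}\times C^{m-2,\alpha}_{\delta+2}](S)$ is a $C^\infty$ map of Banach manifolds: the terms entering $\mathbf E,\mathbf F,\mathbf H$ and the divergence gauge $\delta^*\delta_{\tilde g}g$ are polynomial in $g$, $g^{-1}$ and their first two derivatives together with the analytic function $e^{-4u}$, and multiplication and composition with such functions are smooth operations on the weighted H\"older spaces of Definition 2.1; moreover $\mathcal M_C$ is a single translate of the closed linear subspace of $[Met^{m,\alpha}_\delta\times C^{m,\alpha}_\delta\times C^{m,\alpha}_\delta](S)$ cut out by $\delta_{\tilde g}g=0$ on $\partial S$, hence itself a $C^\infty$ Banach manifold.

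The core of the argument is then an application of the Banach-space submersion (implicit function) theorem. At the fixed solution $(\tilde g,\tilde u,\tilde\phi)\in\Phi^{-1}(0)$, Theorem 4.2 provides exactly the two hypotheses needed: $D\Phi_{(\tilde g,\tilde u,\tilde\phi)}$ is surjective, and its kernel $K=\mathrm{Ker}(D\Phi_{(\tilde g,\tilde u,\tilde\phi)})$ is a complemented subspace of $T\mathcal M_C$. Consequently a neighborhood of $(\tilde g,\tilde u,\tilde\phi)$ in $\Phi^{-1}(0)=\mathbf Z_C$ is a $C^\infty$ Banach submanifold of $\mathcal M_C$ with tangent space $K$. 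By \S2.3 --- using Theorem 2.7, whose inverse-function-theorem proof produces the gauge-fixing diffeomorphism $\Psi=\Psi(g)$ smoothly in the data --- the gauge-fixing map descends to a homeomorphism from a neighborhood of $[(\tilde g,\tilde u,\tilde\phi)]$ in $\mathcal E_C$ onto a neighborhood of $(\tilde g,\tilde u,\tilde\phi)$ in $\mathbf Z_C$. Transporting the submanifold structure through this identification, and observing that the base point was arbitrary and that two charts built from different reference metrics differ by the smooth $g$-dependent pull-back action of $\mathcal D_0^{m+1,\alpha}(S)$, equips $\mathcal E_C$ with a well-defined $C^\infty$ Banach manifold structure whose tangent space at $[(\tilde g,\tilde u,\tilde\phi)]$ is canonically $K=\mathrm{Ker}(D\Phi_{(\tilde g,\tilde u,\tilde\phi)})$.

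It remains to see that $\mathcal E_C$ is infinite-dimensional, i.e. that $K$ is. For $(h,v,\sigma)\in K\subset T\mathcal M_C$ the gauge component $G'_h=\delta_g h$ already vanishes on $\partial S$, so the remaining components of the linearized boundary operator define a bounded linear map $D\Pi$ from $K$ into $T_\gamma Met^{m,\alpha}(\partial S)\times C^{m-1,\alpha}(\partial S)\times C^{m-1,\alpha}(\partial S)$, sending $(h,v,\sigma)$ to the boundary data $e^{-2u}(h^T-2vg^T)$, $H'_h-2\mathbf n(v)$ and $\mathbf n(\sigma)$ on $\partial S$. Because the boundary value problem $\mathcal P_2=(\mathcal L_2,\mathcal B_2)$ is elliptic (Theorem 3.3), the Agmon-Douglis-Nirenberg estimates show $D\Pi$ is Fredholm; since its target is infinite-dimensional and a Fredholm operator has finite-dimensional cokernel, $K$ must be infinite-dimensional. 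This gives the theorem.

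I expect the only real subtlety to lie in the compatibility bookkeeping of the second paragraph --- checking that charts of $\mathcal E_C$ built from different reference metrics and gauge choices are $C^\infty$-related, so that the manifold structure is globally well defined and not merely defined near a single orbit; the analytic ingredients (smoothness of $\Phi$ on the weighted spaces, the submersion theorem, the ADN estimates) are by contrast entirely routine once the framework of \S2 and \S3 is in hand.
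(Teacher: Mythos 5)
Your proposal is correct and follows essentially the same route as the paper: Theorem 4.2 (surjectivity of $D\Phi$ and splitting of its kernel), the local identification $\mathcal E_C \cong \mathbf Z_C = \Phi^{-1}(0)$ from \S 2.3, and the implicit function theorem in Banach spaces. Your additional argument for infinite-dimensionality via the Fredholm boundary map is a reasonable supplement and is in effect the content of the paper's Theorem 4.6.
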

\begin{proof}
This is an immediate consequence of Theorem 4.2, the fact from \S2.3 that $\Phi^{-1}(0)=\mathcal E_C$ (locally), and the implicit function theorem in Banach spaces.

\end{proof}
Moreover, from the ellipticity results in \S3, it follows that,
\begin{theorem}
The boundary map,
\begin{equation*}
\begin{split}
&\quad\Pi:\mathcal E_C\rightarrow [S_2^{m,\alpha}\times C^{m-1,\alpha}\times C^{m-1,\alpha}](\partial S)\\
&\Pi[(g,u,\phi)]=(e^{-2u}g^T,~e^u(H_{g}-2\mathbf n_g(u)),~e^u\mathbf n_{g}(\phi))
\end{split}
\end{equation*}
is a $C^{\infty}$ Fredholm map, of Fredholm index 0.
\end{theorem}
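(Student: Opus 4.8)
The plan is to obtain this theorem by linearizing and assembling the results already in place: the manifold structure of $\mathcal E_C$ (the preceding theorem, giving $T_{[(\tilde g,\tilde u,\tilde\phi)]}\mathcal E_C\cong\operatorname{Ker}(D\Phi)$), the ellipticity and formal self-adjointness of $\mathcal P_2$ (Theorem 3.3 and Proposition 3.5), and the surjectivity of $D\Phi$ (Theorem 4.2).

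First I would check that $\Pi$ is $C^\infty$. By the manifold structure theorem and $\S 2.3$, near a class $[(\tilde g,\tilde u,\tilde\phi)]$ the moduli space is modeled on the slice $\mathbf Z_C=\Phi^{-1}(0)\subset\mathcal M_C$, and on this chart $\Pi$ reads $(g,u,\phi)\mapsto(e^{-2u}g^T,\,e^{u}(H_g-2\mathbf n_g(u)),\,e^{u}\mathbf n_g(\phi))$. Restriction to $\partial S$, the exponential, the mean-curvature map and the normal-derivative map are all smooth maps between the relevant weighted Hölder Banach spaces, so $\Pi$ is smooth.

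The core point is that $D\Pi$ is Fredholm of index $0$ at every point. A direct computation (cf. $\S 2.4$, $\S 5$) shows that, up to composition with the invertible multiplication operators $e^{\pm2\tilde u},e^{\pm\tilde u}$ on $\partial S$, the differential $D\Pi$ at $(\tilde g,\tilde u,\tilde\phi)$ equals the geometric part $D\mathcal B_{2,\mathrm{geom}}$ of $D\mathcal B_2$ (its last three components) restricted to $\operatorname{Ker}(D\Phi)\subset T\mathcal M_C$. Now the full linearization $D\mathcal P_2=(D\mathcal L_2,D\mathcal B_2)$ is Fredholm of index $0$: ellipticity (Theorem 3.3) gives the Fredholm property of the boundary value problem in the weighted Hölder spaces, the standing hypothesis $\tfrac12<\delta<1$ keeping the weight in the admissible range for the Laplace-type parts, while formal self-adjointness (Proposition 3.5) — which $\tfrac12<\delta$ makes compatible with the $L^2(S)$ duality on $C^{m,\alpha}_\delta(S)$ — identifies the cokernel with the kernel and so forces index $0$. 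Two further facts finish the count: the gauge component $h\mapsto(\delta_g h)|_{\partial S}$ of $D\mathcal B_2$ is onto its trace target, with kernel exactly $T\mathcal M_C$ (one prescribes $\nabla_{\mathbf n}h$ on $\partial S$ freely); and $D\Phi\colon T\mathcal M_C\to[(S_2)^{m-2,\alpha}_{\delta+2}\times C^{m-2,\alpha}_{\delta+2}\times C^{m-2,\alpha}_{\delta+2}](S)$ is onto by Theorem 4.2. Since restricting a Fredholm operator to the kernel of a surjective component of its range, while deleting that component, preserves Fredholmness and the index — an elementary chase on the induced exact sequences of kernels and cokernels — applying this to $D\mathcal P_2$ first along the gauge component and then along $D\Phi$ shows that $D\Pi=D\mathcal B_{2,\mathrm{geom}}|_{\operatorname{Ker}(D\Phi)}$ is Fredholm of index $0$. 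Equivalently: $\operatorname{Ker}(D\Pi)=\operatorname{Ker}(D\hat\Phi)$ is finite-dimensional by Theorem 3.3, while $\operatorname{coker}(D\Pi)\cong\operatorname{coker}(D\hat\Phi)\cong\operatorname{Ker}(D\hat\Phi)$ by the self-adjointness of $D\hat\Phi$ recorded in $\S 4.1$, so $\operatorname{index}(D\Pi)=0$.

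Combining, $\Pi$ is a $C^\infty$ map whose differential is Fredholm of index $0$ everywhere, hence a $C^\infty$ Fredholm map of index $0$. No genuinely new difficulty arises here: the hard analytic input — ellipticity and, crucially, the \emph{self-adjointness} — has already been carried out in $\S 3$ and $\S 4.1$, and it is precisely the self-adjointness that pins the index to $0$; this is why the divergence gauge (which makes $\mathcal P_2$, hence $D\hat\Phi$, formally self-adjoint) is used rather than the simpler Bianchi gauge, for which $\mathcal P_1$ is not self-adjoint. The only remaining task is the diagram-chase bookkeeping relating $D\mathcal P_2$, $D\Phi$, $D\hat\Phi$ and $D\Pi$, which is where I would be most careful about complemented subspaces and the surjectivity hypotheses needed for the index to be preserved.
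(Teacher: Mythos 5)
Your proposal is correct and takes essentially the same route as the paper: the paper's (much terser) proof likewise works on the local chart $\mathbf Z_C\cong\mathcal E_C$ from \S 2.3, deduces smoothness and Fredholmness of the boundary map from the ellipticity of $\mathcal P_2$ (Theorem 3.3), and pins the index to $0$ via the formal self-adjointness of the divergence-gauged operator (Proposition 3.5, \S 5.4), exactly the identification $\operatorname{Ker}(D\Pi)=\operatorname{Ker}(D\hat\Phi)$, $\operatorname{coker}(D\Pi)\cong\operatorname{Ker}(D\hat\Phi)$ that you spell out. Your additional bookkeeping (surjectivity of the gauge-trace component and of $D\Phi$ from Theorem 4.2, and the index-preserving restriction argument) is a legitimate expansion of steps the paper leaves implicit.
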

\begin{proof}
The fact from \S3.2 that the operator $P_2$ is elliptic implies that the boundary map $\tilde\Pi$
\begin{equation*}
\begin{split}
&\quad\tilde\Pi:~\mathbf Z_C\rightarrow [S_2^{m,\alpha}\times C^{m-1,\alpha}\times C^{m-1,\alpha}](\partial S),\\
&\tilde\Pi(g,u,\phi)=
(e^{-2u}g^T,~e^u(H_{g}-2\mathbf n_g(u)),~e^u\mathbf n_{g}(\phi))
\end{split}
\end{equation*}
is smooth and Fredholm. It is of Fredholm index 0 because $P_2$ is formally self-adjoint. Moreover, since we show in \S 2.3 that locally $\mathcal E_C=\mathbf Z_C$, it follows that $\Pi$ is also a smooth Fredholm map and of index 0.

\end{proof}
Now translating the results above from conformal data $(g,u,\phi)$ back to $(g_S,u,\phi)$ via the isomorphism $\mathcal Q$ as in \S3.3, gives Theorem 1.2.\\

$Remark.$ All the methods and results in this paper can be applied equally well to the interior problem where $S\cong B^3$.
\section{Appendix}
In this section, we provide the detailed computation of the linearization of operator $\mathcal{P}$, the linearization of reduced Hilbert-Einstein functional $I$, and some other basic results used in this paper. We refer to [Be] for elementary formulas of the differentials of various geometric tensors.
\subsection{Linearization of the interior operator $\mathcal L$}$~~$

Let $h$ be the infinitesimal deformation of the metric $g$ on $S$.  Then the resulting variation of Ricci tensor is given by,
\begin{equation}
2Ric'_{h}=D^{\ast}Dh-2\delta^{\ast}\delta h-D^2(trh)+O_0.
\end{equation}
\\The variation of scalar curvature is given by,
\begin{equation}s'_h=\Delta_g(trh)+\delta\delta h+O_0.
\end{equation}

Linearization of the gauge term $\delta^*G$ in operator $\mathcal L$ are as follows. 
\\For the Bianchi gauge, we have,
\begin{equation}
\begin{split}
2[\delta^{\ast}G_1]'_h&=2[\delta^{\ast}\beta_{\tilde g}(g)]'_h\\
&=L_{\beta_{\tilde g}g}h+2\delta^{\ast}\beta_{\tilde{g}}h\\
&=L_{\beta_{\tilde g}g}h+2\delta^{\ast}\delta_{\tilde{g}}h+D^2(trh).
\end{split}
\end{equation}
For the divergence gauge, we have,
\begin{equation}
\begin{split}
2[\delta^{\ast}G_2]'_h&=2[\delta^{\ast}\delta_{\tilde{g}}g]'_h=L_{\delta_{\tilde g}g}h+2\delta^{\ast}\delta_{\tilde{g}}h.
\end{split}
\end{equation}

Combining equations $(5.1)$ and $(5.3)$, one can derive the linearization for $\mathcal L$, with the Binachi gauge, at $\tilde g=g$:
$$L_1(h)=D^*Dh+O_0.$$ 

Combining equations $(5.1-2)$ and $(5.4)$, one can derive the linearization for $\mathcal L$, with the divergence gauge, at $\tilde g=g$:
$$L_2(h)=D^*Dh-D^2(trh)-(\Delta_g(trh)+\delta\delta h)g+O_0.$$ 
\subsection{Linearization of the boundary operator $\mathcal{B}$}$~$

We know that the normal vector $\mathbf{n}$ of $\partial S$ satisfies the following equations,
$$\begin{cases}
g(\mathbf n,\mathbf n)=1,\\
g(\mathbf n,T)=0,
\end{cases}$$
where $T$ is a tangential vector.
Let $\mathbf{n}'_{h}$ denote the variation of $\mathbf{n}$ with respect to deformation $h$. Then linearization of the above equations gives,
$$\begin{cases}
2g(\mathbf{n},\mathbf{n}'_h)+h(\mathbf{n},\mathbf{n})=0,\\
g(\mathbf{n}'_h,T)+h(\mathbf{n},T)=0,
\end{cases}$$
from which, one can solve for the term $\mathbf{n}'_{h}$ as,
\begin{equation}
\mathbf{n}'_h=-\frac{1}{2}h(\mathbf{n},\mathbf{n})\mathbf{n}-h(\mathbf{n})^{T}=-h(\mathbf n)+\frac{1}{2}h(\mathbf n,\mathbf n)\mathbf n.
\end{equation}

The variation $H'_h$ of mean curvature $H_g$ is given by 
\begin{equation}
2H'_h= 2(trA)'_h=2trA'_h-2\langle A_g, h\rangle,
\end{equation}
where $A_g$ is the second fundamental form of $\partial S\subset (S,g)$, defined by $A_g=\frac{1}{2}L_{\mathbf{n}}g$.
Linearization of $A$ is as follows, 
\begin{equation*}
2A'_h=(L_{\mathbf{n}}h+L_{\mathbf{n}'_h}g)=\nabla_{\mathbf{n}}h+2h\circ\nabla \mathbf n+L_{\mathbf{n}'_h}g.
\end{equation*}
Taking the trace of the equation above, we obtain,
\begin{equation*}
2trA'_h=\nabla_{\mathbf{n}}trh+2\langle h,A\rangle-2\delta(\mathbf{n}'_h).
\end{equation*}
Pluging the expression $(5.5)$ for $\mathbf n'_h$ into the equation above, we obtain,
\begin{equation}
\begin{split}
2trA'_h&=\nabla_{\mathbf{n}}trh+2\langle h,A\rangle+2\delta^T(h(\mathbf n)^T)-\mathbf n(h(\mathbf n,\mathbf n))+O_0\\
&=\nabla_{\mathbf{n}}tr^Th+2\langle h,A\rangle+2\delta^T(h(\mathbf n)^T)+O_0.
\end{split}
\end{equation}
Combining equations $(5.6)$ and $(5.7)$ gives,
$$H'_h=\frac{1}{2}\nabla_0(h_{11}+h_{22})-\Sigma_{k=1}^{2}\nabla^k(h_{0k})+O_0,$$
which is the same as used in the symbol computation in \S3.1.
\subsection{Variation of the functional $I$}$~$

First, we define a functional $\tilde I$ as,
$$\tilde I=\int_{S} s_g-2|du|^2-2e^{-4u}|d\phi|^2 dvol_g.$$
Since the variation of scalar curvature $s_g$ is given by,
$$s'_h=\Delta_g(trh)+\delta\delta h-\langle Ric_g, h\rangle,$$
linearization of $I$ with respect to the metric is as follows,
\begin{equation*}
\begin{split}
\tilde I_g'(h)&=\int_S [\Delta_g(trh)+\delta\delta h-\langle Ric_g, h\rangle+2h(du,du)+2e^{-4u}h(d\phi,d\phi)\\
&\quad\quad\quad+\frac{1}{2}trh(s_g-2|du|^2-2e^{-4u}|d\phi|^2)] \\
&=\int_S [\Delta_g(trh)+\delta\delta h\\
&\quad\quad\quad+\langle -Ric_g+2du\otimes du+2e^{-4u}d\phi\otimes d\phi+\frac{1}{2}(s_g-2|du|^2-2e^{-4u}|d\phi|^2)g,h\rangle]\\
&=\int_S [\Delta_g(trh)+\delta\delta h+\langle \mathbf E,h\rangle].
\end{split}
\end{equation*}
The term $\int_S [\Delta_g(trh)+\delta\delta h]$ in the expression above can be converted to a boundary term as,
\begin{equation}
\begin{split}
&\int_S [\Delta_g(trh)+\delta\delta h]\\
&=-\int_{\partial S} [\mathbf n(trh)+\delta h(\mathbf n)]-\int_{\partial S_{\infty}} [\mathbf n(trh)+\delta h(\mathbf n)]\\
&=-\int_{\partial S} [\mathbf n(trh)-\mathbf n(h_{00})+\langle h,A\rangle]-\int_{\partial S_{\infty}} [\mathbf n(trh)+\delta h(\mathbf n)]. 
\end{split}
\end{equation}
To balance the finite boundary term, we add an extra term $I_B=\int_{\partial S}2H_g$ to the functional $I$. Notice that the first variation of $I_B$ with respect to the metric is given by,
\begin{equation*}
\begin{split}
I_B'(h)&=\int_{\partial S}[2H'_h+tr^ThH_g] \\
&=\int_{\partial S} [\mathbf n(trh)-2\delta(\mathbf n'_h)+tr^ThH_g] .
\end{split}
\end{equation*}
For a generic vector field $V$ on $\partial S$, we have $\delta V=\delta^T V^T-\mathbf n(V_0)$. Thus, we can simplify the term $\delta(\mathbf n'_h)$ in the boundary term above and obtain,
\begin{equation}
\begin{split}
I_B'(h)=\int_{\partial S} [\mathbf n(trh)-\mathbf n(h_{00})+tr^ThH_g] .
\end{split}
\end{equation}
Combining equations $(5.8)$ and $(5.9)$ we can obtain the formulae of variation for the functional $\tilde I+I_B$:
\begin{equation*}
(\tilde I+I_B)'(h)=\int_S \langle\mathbf E,h\rangle  +\int_{\partial S}[-\langle A,h\rangle+tr^ThH_g] -\int_{\partial S_{\infty}} [\mathbf n(trh)+\delta h(\mathbf n)].
\end{equation*}
To remove the boundary term at infinity, we use the mass $m_{\text{ADM}}(g)$, as shown in equation $(3.25)$.
\subsection{Formal self-adjointness of $D\hat\Phi$}$~$

To prove the self-adjointness for $D\hat\Phi$, we will use the functional $I$, as defined in \S3, 
$$I=\int_{S} s_g-2|du|^2-2e^{-4u}|d\phi|^2 dvol_g+2\int_{\partial S}Hdvol_{g^T}+16\pi m_{ADM}(g).$$
Recall from \S3, the first variation of $I$ is given by,
\begin{equation}
\begin{split}
I'_{(\tilde g,\tilde u,\tilde \phi)}(h,v,\sigma)&=\int_S \langle (\mathbf E, \mathbf F, \mathbf H), (h,v,\sigma)\rangle\\
&\quad\quad+\int_{\partial S}[ -\langle A_{\tilde g},h\rangle+H_{\tilde g}trh^{T}-4\mathbf n(u)v-4e^{-4u}\sigma \mathbf n(\phi)].
\end{split}
\end{equation}
Let $ (h,v,\sigma)$ be in the tangent space $T\mathcal M_S$, which is defined by
\begin{equation}
\begin{split}
T\mathcal M_S=\{~&(h,v,\sigma)\in [(S_2)_{\delta}^{m,\alpha}\times C^{m,\alpha}_{\delta}\times C^{m,\alpha}_{\delta}](S): \\
&\begin{cases} 
\delta_{\tilde g}h=0,\\
h^T-2v\tilde g^T=0,\\
[e^u(H_{\tilde g}-2\mathbf n(u))]'_{(h,v)}=0,\\
[e^{-2u}\mathbf n(\phi)]'_{(h,v,\sigma)}=0,
\end{cases}\quad\text{on } \partial S~\}.
 \end{split}
 \end{equation}
 Applying the boundary conditions in $(5.11)$ to the equation $(5.10)$, we obtain,
 \begin{equation*}
\begin{split}
&I'_{(\tilde g,\tilde u,\tilde \phi)}(h,v,\sigma)\\
&=\int_S \langle(\mathbf E, \mathbf F, \mathbf H), (h,v,\sigma)\rangle+\int_{\partial S} [-\langle A_{\tilde g},2v\tilde g\rangle+2vH_{\tilde g}tr\tilde g^{T}-4\mathbf n(u)v-4e^{-4u}\sigma \mathbf n(\phi)]\\
&=\int_S \langle (\mathbf E, \mathbf F, \mathbf H), (h,v,\sigma)\rangle+\int_{\partial S} [2v(H_{\tilde g}-2\mathbf n(u))-4e^{-4u}\sigma \mathbf n(\phi)].
\end{split}
\end{equation*}
Taking the variation of $I'$ with respect a deformation $(k,w,\zeta)\in T\mathcal M_S$, we obtain
 \begin{equation*}
\begin{split}
I''_{(\tilde g,\tilde u,\tilde \phi)}[(h,v,\sigma),(k,w,\zeta)]&=\int_S \langle D(\mathbf E, \mathbf F, \mathbf H){(k,w,\zeta)}, (h,v,\sigma)\rangle\\
&\quad+\int_{\partial S} \{2v[H_{\tilde g}-2\mathbf n(u)]'_{(k,w)}-\sigma[4e^{-4u}\mathbf n(\phi)]'_{(k,w,\zeta)}\}\\
&\quad+\int_{\partial S}\{\frac{1}{2}trk^T[2v(H_{\tilde g}-2\mathbf n(u))-4e^{-4u}\sigma \mathbf n(\phi)]\}.
\end{split}
\end{equation*}
According to $(5.11)$, we have $k^T=2w\tilde g^T$ and $ 2v[H_{\tilde g}-2\mathbf n(u)]'_{(k,w)}=-2vw(H_{\tilde g}-2\mathbf n(u))$. Thus the boundary terms in the expression above can be simplifies as
 \begin{equation*}
\begin{split}
&I''_{(\tilde g,\tilde u,\tilde \phi)}[(h,v,\sigma),(k,w,\zeta)]\\
&=\int_S \langle D(\mathbf E, \mathbf F, \mathbf H){(k,w,\zeta)}, (h,v,\sigma)\rangle+\int_{\partial S} 2wv[H_{\tilde g}-2\mathbf n(u)]'_{(k,w)}\\
&\quad+\int_{\partial S}\{-\sigma[4e^{-4u}\mathbf n(\phi)]'_{(k,w,\zeta)}-8we^{-4u}\sigma \mathbf n(\phi)\}\\
&=\int_S \langle D(\mathbf E, \mathbf F, \mathbf H){(k,w,\zeta)}, (h,v,\sigma)\rangle+\int_{\partial S} 2wv[H_{\tilde g}-2\mathbf n(u)]'_{(k,w)}\\
&\quad+\int_{\partial S}4e^{-4u}\sigma[2w\mathbf n(\phi)-(\mathbf n(\phi))'_{(k,\zeta)}].
\end{split}
\end{equation*}
Here the last boundary term vanishes, because 
$$4e^{-4u}\sigma[2w\mathbf n(\phi)-(\mathbf n(\phi))'_{(k,\zeta)}]=-4e^{-2u}[e^{-2u}\mathbf n(\phi)]'_{(k,w,\zeta)}=0,$$
based on the conditions in $(5.11)$.
Therefore, from the symmetry of the 2nd order variation of the functional $I$, it follows that,
 \begin{equation}
\begin{split}
\int_S \langle D(\mathbf E, \mathbf F, \mathbf H){(k,w,\zeta)}, (h,v,\sigma)\rangle=\int_S \langle D(\mathbf E, \mathbf F, \mathbf H){(h,v,\sigma)}, (k,w,\zeta)\rangle.
\end{split}
\end{equation}
In addition, it is easy to derive that 
 \begin{equation}
\begin{split}
\int_S \langle \delta^*\delta k,h\rangle =\int_S\langle \delta^*\delta h,k\rangle \quad\text{for}~k,h\in T\mathcal M_S.
\end{split}
\end{equation}
Combining equations $(5.12)$ and $(5.13)$, we obtain the formal self-adjointness for $D\hat\Phi$, i.e.
 \begin{equation*}
\int_S \langle D\hat\Phi[(k,w,\zeta)], (h,v,\sigma)\rangle=\int_S \langle D\hat\Phi[(h,v,\sigma)], (k,w,\zeta)\rangle,~\forall (k,w,\zeta),(h,v,\sigma)\in T\mathcal M_S.
\end{equation*}

\textit{Remark} The calculation here is basically the same as in the proof of Proposition 3.5. But there is still difference between them. In Proposition 3.5, we only consider the linearization of $\mathbf E$ with respect to the metric deformation $h$ (or $k$), and same for $\mathbf F$ and $\mathbf H$. This the reason that in (3.32), the linearizations are written as $(\mathbf E'_h,\mathbf F'_v,\mathbf H'_{\sigma})$. However, here we consider the variation of $\mathbf E$ in all directions of deformation $(h,v,\sigma)$ and also for $\mathbf F,\mathbf H$. So we are writting $D(\mathbf E, \mathbf F, \mathbf H){(k,w,\zeta)}$ in (5.12).
\subsection{The blow-up rate of $Z$}
Given that $\delta^*Z$ has the decay rate $\delta$ (denoted as $\delta^*Z\sim r^{-\delta}$), we will find an upper bound for the blow-up rate of $Z$ in the following. 

For a radius $r$ large enough, let $S_r\subset S$ be the sphere of radius $r$. Let $N_r$ denote the unit normal vector of the sphere pointing outwards, then we have
\begin{equation*}
\delta^* Z(N_r,N_r)=N_r[\tilde g(Z,N_r)]+\tilde g(Z,\nabla_{N_r}N_r).
\end{equation*}
One can extend $N_r$ in a way such that $\nabla_{N_r}N_r=0$, and thus 
$$N_r[\tilde g(Z,N_r)]\sim r^{-\delta}.$$
Therefore, $g(Z,N_r)$ blows up no faster than $r^{1-\delta}$.
\\Let $Z^T$ denote the tangential component of $Z$ along $S_r$, i.e. $Z^T=Z-g(Z,N_r)N_r$. Basic calculation shows
\begin{equation*}
\begin{split}
2\delta^* Z(N_r, Z^T)&=\tilde g(\nabla_{N_r}Z,Z^T)+\tilde g(\nabla_{Z^T}Z,N_r)\\
&=\tilde g(\nabla_{N_r}(Z^T+g(Z,N_r)N_r),Z^T)+Z^T[\tilde g(Z,N_r)]-\tilde g(Z,\nabla_{Z^T}N_r)\\
&=\tilde g(\nabla_{N_r}Z^T,Z^T)+Z^T[\tilde g(Z,N_r)]-A(Z^T,Z^T)\\
&=|Z^T| N_r(|Z^T|)+Z^T[\tilde g(Z,N_r)]-A(Z^T,Z^T),
\end{split}
\end{equation*}
where $A$ denotes the second fundamental form of the hypersurface $S_r\subset (S,\tilde g)$. Thus we obtain,
\begin{equation*}
N_r(|Z^T|)-|Z^T|A(\frac{Z^T}{|Z^T|},\frac{Z^T}{|Z^T|})=2\delta^* Z(N_r,\frac{Z^T}{|Z^T|})-\frac{Z^T}{|Z^T|}[\tilde g(Z,N_r)].
\end{equation*}
It is obvious that the right side of the equation above blows up no faster than  $r^{1-\delta}$. The left side is essentially equal to $\big(\partial_r(|Z^T|)-\frac{1}{r}|Z^T|\big)$ since $N_r$ is asymptotically $\partial_r$ and the second fundamental form $A$ is asymptotically equal to $\frac{1}{r}g$. Thus $\big(\partial_r(|Z^T|)-\frac{1}{r}|Z^T|\big)$ blows up no faster than $r^{1-\delta}$, which implies that the increasing rate of $|Z^T|$ is at most  $r^{2-\delta}$.

\bibliographystyle{amsplain}

\end{document}